\documentclass[12pt]{amsart}
\usepackage[margin=1.05in]{geometry}
\usepackage{graphicx}
\usepackage{amsmath, amsthm, amsfonts, amssymb, mathtools, datetime, graphicx, url, longtable}
\usepackage[ruled]{algorithm2e}
\usepackage{fixfoot}
\usepackage[small,nohug,heads=vee]{diagrams}
\diagramstyle[labelstyle=\scriptstyle]
\usepackage{enumitem}
\usepackage{cancel}
\usepackage{systeme}
\usepackage{tikz-cd}
\tikzset{
  symbol/.style={
    draw=none,
    every to/.append style={
      edge node={node [sloped, allow upside down, auto=false]{$#1$}}
    },
  },
}

%%% TABLES STUFF
\usepackage{array, tabularx, float, multirow}

%%%

%%% BIBLIOGRAPHY AND CITATIONS
\usepackage[utf8]{inputenc}
\usepackage[english]{babel}
\usepackage[backend = biber,
    style = alphabetic,
    citestyle = alphabetic,
    maxbibnames=10,
    giveninits=true,
    doi=false,
    url=false
]{biblatex}
\addbibresource{biblio.bib}
\usepackage{csquotes}
% Removing "p." from citations
\DeclareFieldFormat{postnote}{#1}
%%%

% COLUMNS
\usepackage{multicol}
\usepackage{vwcol}

\usepackage{hyperref}
\hypersetup{
 colorlinks,
 linkcolor=blue,
 citecolor={blue}
 }
\usepackage[noabbrev,capitalize]{cleveref}

%%% MAKING STATEMENTS %%%
\theoremstyle{definition}

\newtheorem{Thrm}{Theorem}[section]
\newtheorem{Cor}[Thrm]{Corollary}

\newtheorem{Lem}[Thrm]{Lemma}
\newtheorem{Prop}[Thrm]{Proposition}

\newtheorem{Def}[Thrm]{Definition}

\newtheorem{Not}[Thrm]{Notation}
\newtheorem{Rem}[Thrm]{Remark}
\newtheorem{Constr}[Thrm]{Construction}

\newtheorem{conjecture}[Thrm]{Conjecture}

%%% MATHBB %%%
\newcommand{\C}{\mathbb{C}}
\newcommand{\F}{\mathbb{F}}

\newcommand{\PS}{\mathbb{P}}

\newcommand{\Z}{\mathbb{Z}}
\newcommand{\Q}{\mathbb{Q}}

%%% MATHFRAK %%%

% MATHCAL

\newcommand{\cE}{\mathcal{E}}

\newcommand{\cO}{\mathcal{O}}

%%% MATHRM %%%
\newcommand{\Id}{\mathrm{Id}}

%%% OPERATORNAME %%%

\newcommand{\Bl}{\operatorname{Bl}}

\newcommand{\ch}{\operatorname{ch}}
\newcommand{\chn}{\operatorname{ch}}
\newcommand{\cn}{\operatorname{c}}
\newcommand{\codim}{\operatorname{codim}}

\newcommand{\PC}{\mathrm{PC}}
\newcommand{\ord}{\mathrm{ord}}
\newcommand{\rat}[0]{\operatorname{RatCurves}^n}

%%% MISC %%%

% MATHBB
\newcommand{\CC}{\mathbb{C}}

\newcommand{\PP}{\mathbb{P}}
\newcommand{\QQ}{\mathbb{Q}}

\newcommand{\ZZ}{\mathbb{Z}}

% OPERATORS
\newcommand{\on}[1]{\operatorname{#1}}
\newcommand{\Span}{\on{Span}}
\newcommand{\Conv}{\on{Conv}}
\renewcommand{\NE}{\on{NE}}

%%% MACRO FOR THE m INVARIANT %%%
\newcommand{\minRC}{m}

% COUNTER
\newcounter{assumption}
\newcommand\assumption{\stepcounter{assumption}}

\begin{document}

\title[The minimal projective bundle dimension and toric $2$-Fano manifolds]{The minimal projective bundle dimension\\and toric $2$-Fano manifolds}

\makeatletter\let\@wraptoccontribs\wraptoccontribs\makeatother

\author[Araujo]{Carolina Araujo}
\address{Carolina Araujo, IMPA, 
Estrada Dona Castorina 110,
22460-320 Rio de Janeiro, Brazil}
\email{caraujo@impa.br}

\author[Beheshti]{Roya Beheshti}
\address{Roya Beheshti, Department of Mathematics \& Statistics, 
Washington University in St. Louis, St. Louis, MO, 63130}
\email{beheshti@wustl.edu}

\author[Castravet]{Ana-Maria Castravet}
\address{Ana-Maria Castravet, 
Universit\'e Paris-Saclay, UVSQ, Laboratoire de Math\'ematiques de Versailles, 
45 Avenue des \'Etats Unis, 78035 Versailles, France }
\email{ana-maria.castravet@uvsq.fr}

\author[Jabbusch]{Kelly Jabbusch}
\address{Kelly Jabbusch, Department of Mathematics \& Statistics, University of Michigan--Dearborn, 4901 Evergreen Rd, Dearborn, Michigan, 48128, USA}
\email{jabbusch@umich.edu}

\author[Makarova]{Svetlana Makarova}
\address{Svetlana Makarova, Department of Mathematics, University of Pennsylvania, 209 S 33rd St, Philadelphia, PA 19104, USA}
\email{murmuno@sas.upenn.edu}

\author[Mazzon]{Enrica Mazzon}
\address{Enrica Mazzon, Fakultät für Mathematik, Universität Regensburg, Universitäts\-strasse 31, 93040 Regensburg, Germany, and Department of Mathematics, University of Michigan, Ann Arbor, Michigan, 48109, USA}
\email{e.mazzon15@alumni.imperial.ac.uk}

\author[Viswanathan]{Nivedita Viswanathan}
\address{Nivedita Viswanathan, Department of Mathematical Sciences, Loughborough University, Loughborough, LE11 3TU, UK and School of Mathematical Sciences, University Park Campus, The University of Nottingham, Nottingham, NG7 2RD, UK}
\email{N.Viswanathan@lboro.ac.uk}

\contrib[with an appendix by]{Will Reynolds}
\address{Will Reynolds, School of Mathematics, The University of Edinburgh, Edinburgh, EH9 3FD, UK}
\email{W.R.N.Reynolds@sms.ed.ac.uk}

\date{}

\maketitle

\begin{abstract}
Motivated by the problem of classifying toric $2$-Fano manifolds, we introduce a new invariant for smooth projective toric varieties, the minimal projective bundle dimension. 
This invariant $\minRC(X)\in\{1, \dots,\dim(X)\}$ captures the minimal degree of a dominating family of rational curves on $X$ or, equivalently, the minimal length of a centrally symmetric primitive relation for the fan of $X$.
We classify smooth projective toric varieties with $\minRC(X)\geq \dim(X)-2$, and show that projective spaces are the only $2$-Fano manifolds among smooth projective toric varieties with 
$\minRC(X)\in\{1, \dim(X)-2,\dim(X)-1,\dim(X)\}$.
\end{abstract}

\tableofcontents

\section{Introduction}

Fano varieties are projective varieties with positive first Chern class. Over the complex numbers, this condition is equivalent to the existence of a metric with positive Ricci curvature. Basic examples of Fano varieties include projective spaces and Grassmannians.
The positivity condition has further geometric implications, e.g., Fano varieties over the complex numbers are \textit{simply connected}. This has an analogue on the algebro-geometric side:
any Fano variety is covered by rational curves \cite{mori79}, and is in fact {\em{rationally connected}}
\cite{KMM92, Campana}, i.e., there are rational curves connecting any two of its points.
In a series of papers, de Jong and Starr introduce and investigate possible candidates for the notion of \emph{higher rational connectedness} \cite{deJongHeStarr11, dJS07hF_and_rat_surfaces, dJS06note_on_positive_ch2, dJS06CI_are_RSC, Starr06hypersurfacesRSC}, inspired by the natural analogue in topology.
In particular, in \cite{dJS06note_on_positive_ch2} they define {\em $2$-Fano} manifolds. 
A smooth projective variety $X$ is $2$-Fano if it is Fano and its second Chern character 
$\ch_2(T_X)=\frac{1}{2}c_1(T_X)^2-c_2(T_X)$ is positive, i.e., $\ch_2(T_X)\cdot S>0$ for every surface $S$ in $X$. In a similar way, one can define $k$-Fano varieties for any $k\geq 2$, and  aim at their classification. 
For instance, $\PP^n$ is $n$-Fano, and it is conjectured that it is the only $n$-dimensional $n$-Fano manifold. The geometry of higher Fano manifolds has been fairly investigated, and in several special cases they are shown to enjoy the expected nice properties. 
For instance, $2$-Fano manifolds satisfying some mild assumptions are covered by rational surfaces \cite{dJS07hF_and_rat_surfaces}, and similar results hold for higher Fano manifolds \cite{Suzuki}, \cite{Nagaoka}.
There is a classification of $2$-Fano manifolds of high index
\cite{AraujoCastravet2013} and, more recently, a classification of homogeneous
$2$-Fano manifolds \cite{team2022}. 
On the other hand, very few examples of higher Fano manifolds are known. 
Quite strikingly, all known examples of $2$-Fano manifolds have Picard rank $1$ and relatively large index.

It is natural for algebraic geometers to turn to the pool of toric varieties when looking for intuition or examples. It is well known that projective spaces are the only projective toric manifolds with Picard rank $1$. Thus, a classification of toric $2$-Fano manifolds could either provide the first examples of $2$-Fano manifolds with higher Picard rank, or it could be an evidence that every $2$-Fano manifold has Picard rank $1$. 
Geometric properties of a toric variety can often be checked in the combinatorics of the associated fan. This bridge has been exploited in search of new examples of toric $2$-Fano manifolds
\cite{Nobili2011}, \cite{Nobili2012}, \cite{Sato2012}, \cite{Sato2016}, \cite{SatoSuyama2020}, \cite{SanoSatoSuyama2020},  \cite{Shrieve2020}.
Despite the efforts, a complete (computer aided) classification is only known up to dimension $8$ \cite{Nobili2011},  \cite{SanoSatoSuyama2020}, and projective spaces remain the only known examples of toric $2$-Fano manifolds. The sparsity of higher Fano manifolds leads to the following conjecture.

\begin{conjecture}\label{sparsity_conjecture}(\cite[Conjecture 4.3]{SanoSatoSuyama2020})
The only toric 2-Fano manifolds are projective spaces.
\end{conjecture}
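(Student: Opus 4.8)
The plan is induction on $n=\dim X$. For $n\le 4$ the set $\{2,\dots,n-3\}$ is empty, so $\minRC(X)\in\{1,n-2,n-1,n\}$ and the results of this paper already force $X=\PP^n$; this settles the base cases. For the inductive step let $X$ be a smooth projective toric $2$-Fano manifold of dimension $n\ge 5$ with $X\ne\PP^n$. Once more by the results of this paper I may assume $\minRC(X)=m$ with $2\le m\le n-3$, i.e. that we are in the intermediate range left open by the extreme-value theorems. Fix a centrally symmetric primitive relation realizing $\minRC(X)$; it corresponds to a dominating family of minimal rational curves and, I claim, to a fiber-type toric contraction $\pi\colon X\to Y$ onto a smooth projective toric variety $Y$ whose general fiber is a projective space $F$, with $1\le\dim Y<n$ and $\dim F\ge 1$. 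Both the fiber and the base are positive-dimensional precisely because $\minRC(X)$ lies away from its extreme values.

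Next I would propagate the $2$-Fano condition to the base. From the relative tangent sequence $0\to T_{X/Y}\to T_X\to \pi^*T_Y\to 0$ and additivity of the Chern character one gets
\[
\ch_2(T_X)=\ch_2(T_{X/Y})+\pi^*\ch_2(T_Y).
\]
Since the fibration is toric, the torus-fixed points of $F$ yield torus-invariant sections $s\colon Y\to X$. For a torus-invariant surface $S\subset Y$ this gives
\[
\ch_2(T_X)\cdot s_*S=\ch_2(T_{X/Y})\cdot s_*S+\ch_2(T_Y)\cdot S.
\]
As $X$ is $2$-Fano the left-hand side is positive; analyzing the relative Euler sequence along $s$ and choosing, among the finitely many torus-invariant sections, one that makes the relative term non-positive should give $\ch_2(T_Y)\cdot S>0$ for every $S$, i.e. $Y$ is $2$-Fano. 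By the inductive hypothesis $Y=\PP^k$ with $k=\dim Y$.

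It then remains to rule out the total space. A toric projective-space fibration over $\PP^k$ is the projectivization of a direct sum of line bundles, so $X=\PP(\cE)$ with $\cE=\bigoplus_{i=0}^{\dim F}\cO_{\PP^k}(a_i)$. Computing $\ch_2(T_X)$ explicitly from this splitting and the relative Euler sequence, and testing positivity on a line in a fiber, on the distinguished section $\PP^k$, and on the mixed surfaces spanned by a fiber line and a line in the base, yields a finite system of inequalities in the integers $a_i$. I expect this system to be inconsistent whenever $k\ge 1$ and $\dim F\ge 1$; since every such $\PP(\cE)$ has Picard rank at least $2$ and is never isomorphic to a projective space, this contradicts $X$ being a non-projective-space $2$-Fano manifold, forcing $X=\PP^n$ and closing the induction.

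The main obstacle is the middle step, propagating the $2$-Fano inequality to the base for intermediate $m$: when neither $\dim F$ nor $\dim Y$ is small, the relative term $\ch_2(T_{X/Y})\cdot s_*S$ and the base term $\ch_2(T_Y)\cdot S$ are of comparable size and there is no a priori sign control, which is exactly why the extreme-$\minRC$ arguments of this paper do not extend verbatim and a genuinely new estimate is required. Two further structural points must be secured: that the minimal centrally symmetric primitive relation really produces a fiber-type contraction to a \emph{smooth} toric base with constant projective-space fibers (a priori the fibers could jump or $Y$ could be singular, so one likely uses the $2$-Fano hypothesis itself to force $\pi$ to be an honest projective bundle), and that a torus-invariant section exists over every torus-invariant surface $S\subset Y$. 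I regard the relative estimate as the crux and these two structural facts as the principal technical risks.
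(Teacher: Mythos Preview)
The statement you are attempting to prove is a \emph{conjecture}; the paper does not prove it. The paper establishes only the partial results stated in Theorems~1.2--1.4 and Corollary~1.5, and explicitly presents Conjecture~1.1 as open. So there is no ``paper's own proof'' to compare against, and your task was ill-posed from the start.

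Setting that aside, your outline has genuine structural gaps, some of which you yourself flag.

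\textbf{The contraction does not exist in general.} Your central move is to pass from a centrally symmetric primitive relation of order $m+1$ to a global fiber-type contraction $\pi\colon X\to Y$ with projective-space fibers. This is false without further hypotheses. Proposition~2.10 of the paper shows precisely that such a relation yields a $\PP^m$-bundle only over the open set $U=X\setminus V(\mathcal{E}_P)$, and the complement $V(\mathcal{E}_P)$ can have codimension~$1$. The entire point of Section~3 is to repair this in the special case $m=1$ by first blowing down to kill the codimension-$1$ components (Proposition~3.3). For intermediate $m$ no analogous reduction is known; you cannot simply assume a smooth global $\PP^m$-bundle structure, and you certainly cannot assume the base is smooth or that fibers are constant. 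Your parenthetical ``one likely uses the $2$-Fano hypothesis itself to force $\pi$ to be an honest projective bundle'' is exactly the missing theorem.

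\textbf{Propagation of the $2$-Fano condition is unjustified.} Even granting a global bundle $X=\PP_Y(\cE)$, your claim that one can choose a torus-invariant section $s$ making $\ch_2(T_{X/Y})\cdot s_*S\le 0$ for every surface $S$ is unsupported. The relative Euler sequence gives $\ch_2(T_{X/Y})$ in terms of $\cE$ and the tautological class, and along a section these contributions have no a priori sign; different surfaces $S$ may require different sections. You acknowledge this (``there is no a priori sign control''), but this is not a technicality---it is the heart of the problem.

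\textbf{The base cases are not covered by the paper as stated.} Theorem~1.4(3) classifies the case $m(X)=n-2$ only for $n\ge 6$; for $n=4,5$ the paper does not claim a classification via its own theorems (one would instead appeal to the computer classification cited in the introduction). Your sentence ``the results of this paper already force $X=\PP^n$'' for $n\le 4$ is therefore not literally correct as an appeal to Theorems~1.2--1.4.

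In short: the proposal is a heuristic program, not a proof, and the step from ``centrally symmetric primitive relation'' to ``global smooth $\PP^m$-fibration'' is a genuine obstruction that the paper's own machinery does not overcome for $2\le m\le n-3$.
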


In this paper, we propose a new strategy to approach Conjecture~\ref{sparsity_conjecture}.
We follow the philosophy introduced in \cite{AraujoCastravet2012}, namely, to investigate $2$-Fano manifolds by studying their \emph{minimal dominating families of rational curves}. 
By \cite{CFH}, minimal dominating families of rational curves on a smooth projective toric variety $X$
correspond to \textit{primitive relations} of the form
\begin{equation}\label{CSR}
 x_0 + \dots + x_{m}=0,
\end{equation}
satisfied by some of the primitive integral generators $x_i$ of the corresponding fan.
These primitive relations are called \emph{centrally symmetric of order $m+1$}.
By \cite{CFH}, a centrally symmetric primitive relation of order $m+1$
yields a $\PS^m$-bundle structure $X^{\circ}\to T$ on a dense open subset $X^{\circ}$ of $X$. 
If $\dim(T)\geq 1$, and the complement $X\setminus X^{\circ}$ has codimension at least $2$ in $X$, then one can construct a complete surface $S\subset X^{\circ}$ such that $\ch_2(T_X)\cdot S\leq 0$, showing that $X$ is not $2$-Fano. 
So our basic strategy consists of trying to describe, in a rather explicit way, a suitable birational map $\varphi:X\dashrightarrow Y$
transforming $X$ into a projective toric variety $Y$ admitting a $\PS^m$-bundle structure on a 
big open subset. 
We then hope to be able to compare 
the second Chern characters 
$\ch_2(T_X)$ and $\ch_2(T_Y)$ to show that  $X$ is not $2$-Fano,
except if $X=\PS^m$ and $\varphi$ is the identity. 

To follow this strategy, we introduce a new invariant of a smooth projective toric variety $X$, the \emph{minimal projective bundle dimension} of $X$, \emph{minimal $\mathbb{P}$-dimension} in short, which is of independent interest (\cref{def:m(x)}):
\[
\minRC(X) \ = \ \min\big\{ \ m\in \ZZ_{>0} \ \big| \ \text{ there is a relation as in (\ref{CSR}) } \big\} \ \in \ \{1, \dots, \dim X\}.
\]
By going through the database of toric Fano manifolds 
of low dimension and computing their primitive collections, one obtains Table~\ref{table_m}, indicating the number of Fano manifolds for each value of $\minRC$.  \cref{app} contains the code used to compute primitive collections. 

\begin{table}[h] \label{table_m}
\centering
\begin{tabular}{|l||l|l|l|l|l|l|l|}
\hline
\small{$\dim(X)$} & \small{\# Fanos}  & \small{\#($m$=1)} & \small{\#($m$=2)} & \small{\#($m$=3)} & \small{\#($m$=4)} & \small{\#($m$=5)} & \small{\#($m$=6)} \\ \hline \hline
4      & 124  & 107        & 15         & 1          & 1          &            &            \\ \hline
5      & 866  & 744        & 112        & 8          & 1          & 1          &            \\ \hline
6      & 7622 & 6333       & 1174       & 105        & 8          & 1          & 1         \\ \hline
\end{tabular}
\medskip
\caption{The minimal $\mathbb{P}$-dimension of toric Fano manifolds of low dimension.}
\end{table}

When $\minRC (X) = 1$, Casagrande  constructs in \cite{Cas03} a sequence of blowdowns and flips from $X$ to a toric variety admitting a global $\PP^1$-bundle structure. 
This allows us to make the basic strategy work, yielding the following result.

\begin{Thrm}\label{thm:m=1_not_2-Fano}
Let $X$ be a smooth toric Fano variety with $\minRC(X)=1$.
Then $X$ is not $2$-Fano.
\end{Thrm}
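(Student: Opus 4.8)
We assume $\dim X\geq 2$ throughout; when $\dim X=1$ we have $X=\PP^1$, which contains no surface. The plan is to turn the ``basic strategy'' into a proof using Casagrande's birational description of $X$. Since $\minRC(X)=1$, the construction of \cite{Cas03} yields a finite sequence of equivariant divisorial contractions and flips $\varphi\colon X\dashrightarrow Y$, with $Y$ a complete toric variety carrying a $\PP^1$-bundle $\pi\colon Y\to T$ over a complete toric variety $T$ of dimension $\dim X-1\geq 1$. As $\varphi$ is composed only of contractions and flips, its inverse contracts no divisor, so there is an open $V\subseteq Y$ with $\codim_Y(Y\setminus V)\geq 2$ on which $\varphi^{-1}$ restricts to an open immersion into $X$.

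Next I would produce a complete surface on $Y$ where $\ch_2(T_Y)$ vanishes. Fix a torus-invariant curve $C\subset T$ --- some $C\cong\PP^1$, which exists since the fan of $T$ is complete of positive dimension --- and set $S:=\pi^{-1}(C)$, a Hirzebruch surface. From the relative tangent sequence $0\to T_{Y/T}\to T_Y\to\pi^{*}T_T\to 0$ and $\rk T_{Y/T}=1$ one obtains $\ch_2(T_Y)=\tfrac12 c_1(T_{Y/T})^2+\pi^{*}\ch_2(T_T)$. Restricting to $S$: the term $\pi^{*}\ch_2(T_T)|_S$ is the pullback of $\ch_2(T_T)|_C$, which is zero as $\dim C=1$; and $c_1(T_{Y/T})^2|_S=c_1(T_{S/C})^2=0$, since $T_{S/C}=\omega_{S/C}^{-1}$ has vanishing self-intersection on a geometrically ruled surface. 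Hence $\ch_2(T_Y)\cdot S=0$, so $Y$ is not $2$-Fano.

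To finish, transport $S$ to $X$. The image $\pi(Y\setminus V)$ is a proper closed torus-invariant subset of $T$; choosing $C$ disjoint from it forces $S=\pi^{-1}(C)$ to be disjoint from $Y\setminus V$, so $\varphi^{-1}$ is an isomorphism near $S$ and carries it to a complete surface $S'\subset X$. Because $T_X$ and $T_Y$ are canonically identified over the common open set, $\ch_2(T_X)\cdot S'=\ch_2(T_Y)\cdot S=0$, and therefore $X$ is not $2$-Fano. (If $\varphi$ is the identity, then $X=Y$ is already a $\PP^1$-bundle over a positive-dimensional base and the previous paragraph alone suffices.)

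I expect the last step to be the main obstacle, specifically when $\pi(Y\setminus V)$ has codimension $1$ in $T$: then (for $T$ projective) \emph{every} complete curve $C$ meets $\pi(Y\setminus V)$, so $\pi^{-1}(C)$ cannot be kept disjoint from the modification loci. There are two ways around this: (i) exploit the explicit form of Casagrande's sequence to show that the contracted and flipped loci actually map into a subset of $T$ of codimension $\geq 2$, so that a general complete-intersection curve $C$ does give a disjoint surface; or (ii) relax the equality $\ch_2(T_X)\cdot S'=\ch_2(T_Y)\cdot S$ to the inequality $\ch_2(T_X)\cdot S'\leq 0$, proved by tracking, along each blow-up and inverse flip making up $\varphi^{-1}$, how $\ch_2\big(T_{(\cdot)}\big)\cdot(\text{strict transform of }S)$ changes --- via the blow-up formula for the second Chern character together with control of the normal-bundle contribution --- and showing it never increases. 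Establishing this monotonicity (morally: blowing up strictly lowers $\tfrac12 K^2-\chi$) is the technical heart of the proof.
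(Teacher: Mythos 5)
Your overall plan is the paper's ``basic strategy,'' and your computation that $\ch_2(T_Y)\cdot S=0$ for a surface $S=\pi^{-1}(C)$ in a $\PP^1$-bundle is sound. But the proof is incomplete at exactly the point you flag, and neither of your proposed fixes is carried out. Your option (i) fails in general: the centers of the divisorial contractions in this situation are of the form $V(x,y)$ with $x$ one of the two vectors of the centrally symmetric collection, so they sit inside the ``section'' divisor $V(x)$ of the $\PP^1$-bundle and their images in the base $T$ are typically divisors; hence every complete surface $\pi^{-1}(C)$ meets the modification loci and cannot be transported isomorphically. Your option (ii) is the right route, but you leave the monotonicity unproved, and in your setup it would also have to be established across the \emph{flips} in Casagrande's sequence, for which there is no clean blow-up formula --- a genuinely harder problem than the one the paper actually solves.

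The key structural input you are missing is the paper's \cref{prop:reduction codim 2}: one never needs the full Casagrande sequence or a global $\PP^1$-bundle. Starting from $r(P)\colon x+(-x)=0$, at most \emph{two} smooth blow-downs with disjoint codimension-$2$ centers (and no flips) already produce a $Y$ on which the $\PP^1$-bundle structure of \cref{prop:Pk bundle} exists off a closed subset of codimension $\geq 2$. The surface is then built not from a torus-invariant curve but from a general very free rational curve $C$ in the bundle locus (\cref{lem:construction S in Y}), so that $S=\pi^{-1}(\pi(C))$ meets the blow-up centers transversely in finitely many points; \cref{thm:S.chY} gives $\ch_2(Y)\cdot S=0$ by an explicit computation on the normalization, and the codimension-$2$ blow-up formula (\cref{Lem:ch_blowup}, \cref{lem:blowdown_c2}) gives $\ch_2(X)\cdot S_X=\ch_2(Y)\cdot S-\tfrac{3}{2}k\leq 0$. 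Without an analogue of \cref{prop:reduction codim 2} (or a worked-out $\ch_2$-monotonicity through flips), your argument does not close.
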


\noindent Table~\ref{table_m} suggests that this result covers ``most'' toric varieties, and not just the fringe cases.

Next we turn our attention to toric Fano manifolds $X$ with large values of $\minRC(X)$. 
Projective spaces are the only smooth projective toric varieties admitting a centrally symmetric primitive relation of order $\dim(X)+1$. 
In \cite[Proposition 3.8]{CFH}, Chen, Fu and Hwang classify toric Fano manifolds admitting a centrally symmetric primitive relation of order $\dim(X)$. There are three such varieties, and two of them also admit a centrally symmetric primitive relation of order $2$.
As a consequence, the only $n$-dimensional toric Fano  manifold $X$ with $m(X) = n-1$ is the blowup of $\PP^n$ along a linear $\PP^{n-2}$.
In \cite{BW}, Beheshti and Wormleighton investigate smooth projective toric varieties admitting a centrally symmetric primitive relation of order $\dim(X)-1$, showing that they have Picard rank $\rho(X) \leq 5$. Most of these varieties also admit centrally symmetric primitive relations of order $2$ or $3$, and we prove the following bound for the remaining ones. \cref{thm:classification_m>=n-2} shows that this bound is sharp.

\begin{Thrm}\label{theorem: P n-2 implies rho<=3}
    Let $X$ be a smooth toric Fano variety with $\dim(X)=n \geq 6$ and $\minRC(X) \geq 3$.
    If $X$ has a centrally symmetric primitive relation of order $n-1$,
    \[ x_0+x_1+ \cdots + x_{n-2}=0, \]
    then
    $\rho(X) \leq 3$. Moreover, $\minRC(X) = n-2$ and the above relation is the only centrally symmetric primitive relation of $X$.
\end{Thrm}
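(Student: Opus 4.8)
The plan is to read off from the relation $R\colon x_0+\cdots+x_{n-2}=0$ a rigid description of the fan $\Sigma$ of $X$, and then to play it against the bound $\rho(X)\le 5$ of \cite{BW} together with the hypothesis $\minRC(X)\ge 3$. For the structural step, set $L:=\Span_{\RR}(x_0,\dots,x_{n-2})\subset N_{\RR}$. As $\minRC(X)\ge 3$, the set $\{x_0,\dots,x_{n-2}\}$ is a genuine primitive collection, so any $n-2$ of the $x_i$ extend to a $\ZZ$-basis of $N$; hence $\dim L=n-2$ and the $x_i$ generate inside $L$ the fan of $\PP^{n-2}$. The key observation is that the rays of $\Sigma$ contained in $L$ are \emph{exactly} $x_0,\dots,x_{n-2}$: this $\PP^{n-2}$-fan is complete in $L$, each of its cones is spanned by at most $n-2$ rays of the primitive collection and is therefore a cone of $\Sigma$, and a ray cannot lie in the relative interior of a positive-dimensional smooth cone. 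Consequently $L$ is a union of cones of $\Sigma$; since every $\sigma\in\Sigma$ is simplicial, $\sigma\cap L$ is a face of $\sigma$, so the projection $N\twoheadrightarrow N'':=N/(L\cap N)\cong\ZZ^2$ induces a fibre-type toric contraction $\varphi\colon X\to S$ onto a complete toric surface $S$ with general fibre $\PP^{n-2}$. The remaining $\rho(X)+1$ rays $y_1,\dots,y_{\rho(X)+1}$ of $\Sigma$ have nonzero images in $N''_{\RR}\cong\RR^2$, and these images generate the rays of $S$.

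Next I would use $\minRC(X)\ge 3$ through the following translation: if $\sum_{q\in Q}q=0$ is a centrally symmetric primitive relation of $X$, then projecting to $N''$ annihilates every $x_i$, so the $y_j$ appearing in $Q$ have images summing to $0$ in $\RR^2$; but two nonzero vectors of $\RR^2$ summing to $0$ are antiparallel, and three or four distinct primitive vectors summing to $0$ cannot lie in a closed half-plane, so in each case the corresponding $y_j$ do not span a strongly convex cone, hence do not span a cone of $\Sigma$, hence --- being contained in the primitive collection $Q$ --- must form a primitive collection on their own, of order $2$ or $3$. Combining this with the splitting structure of the first step, I would show that $\varphi$ has no reducible or non-reduced fibres (any such degeneration, read off over a ray or a $2$-cone of $S$, produces such a short relation), so that $\varphi$ is a $\PP^{n-2}$-bundle, $\rho(X)=\rho(S)+1$, and the $y_j$ are in bijection with the rays of $S$. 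Pushing forward the Fano condition shows $S$ is del Pezzo, so $\rho(S)\le 4$ and $\rho(X)\le 5$, recovering the bound of \cite{BW}. To reach $\rho(X)\le 3$ one must exclude $S\in\{\Bl_2\PP^2,\Bl_3\PP^2\}$, i.e.\ $\rho(S)\in\{3,4\}$: here I would use that a primitive collection of $S$ consisting of two non-adjacent rays lifts to a primitive collection of $X$ whose relation is centrally symmetric precisely when the two rays are antipodal and the corresponding ``twist'' in $L$ vanishes; an analysis of the possible twists, constrained simultaneously by the normalisation $\det(\text{bundle})=\mathcal{O}$ (forced by $R$ being centrally symmetric) and by the Fano inequalities on $X$, shows that for $\rho(S)\ge 3$ at least one antipodal pair --- or some mixed collection also involving the $x_i$ --- yields a centrally symmetric relation of order $2$ or $3$, contradicting the hypothesis. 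Hence $S\in\{\PP^2,\PP^1\times\PP^1,\FF_1\}$ and $\rho(X)\le 3$.

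I expect this last exclusion to be the main obstacle: controlling all the twists over the rays of $\Bl_2\PP^2$ and $\Bl_3\PP^2$ and extracting from the Fano inequalities the short forbidden relation requires a careful finite case analysis, and this is where the Fano hypothesis does essential work; the contraction $\varphi$ and the bound $\rho(X)\le 5$ of \cite{BW} serve precisely to reduce the problem to this finite situation. (Ruling out reducible fibres of $\varphi$ is a comparable but easier piece of the same analysis.)

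Finally, with $\rho(X)\le 3$ the fan consists of $x_0,\dots,x_{n-2}$ together with at most four further rays $y_j$, whose images are the distinct rays of $S$; the ``moreover'' statements then reduce to a finite check. For a centrally symmetric primitive relation $\sum_{q\in Q}q=0$, the part of $Q$ lying among the $y_j$ has image summing to $0$ with at most four distinct nonzero summands, so its cardinality is $0,2,3$ or $4$. Cardinality $0$ forces $Q=\{x_0,\dots,x_{n-2}\}$, since $x_0,\dots,x_{n-3}$ is a basis with $x_{n-2}=-(x_0+\cdots+x_{n-3})$ and so the only vanishing subsum of the $x_i$ is the total one. In cardinalities $2$ and $3$ the $y_j$-part cannot be a proper subset of $Q$ (it would have to span a cone, which the half-plane argument forbids), hence it equals $Q$, giving a centrally symmetric relation of order $\le 3$ --- excluded by $\minRC(X)\ge 3$. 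Cardinality $4$ forces $Q=\{y_1,y_2,y_3,y_4\}$ (so $S=\PP^1\times\PP^1$) with $y_1+y_2+y_3+y_4=0$; using that each antipodal pair is already a primitive collection with \emph{nonzero} relation (else $\minRC(X)=1$), this makes a positive combination of all of $x_0,\dots,x_{n-2}$ with equal coefficients vanish, which a short computation with the Fano inequalities on $X$ excludes. Therefore $R$ is the unique centrally symmetric primitive relation of $X$; since its order is $n-1$ and $n-2\ge 4$ for $n\ge 6$, this gives $\minRC(X)=n-2$.
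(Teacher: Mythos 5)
Your strategy diverges from the paper's and contains a genuine gap at its central reduction. The paper never constructs a global fibration: it projects to $\QQ^2\simeq N_\QQ/\Span P$, picks extra rays $x,y,z$ with $0\in\Conv(\pi(x),\pi(y),\pi(z))$, shows they cannot span a cone, and then runs a case analysis on the resulting primitive relation (degree $1$, degree $2$, or no primitive collection at all), using opponents and extremality of primitive relations to show directly that $G(\Sigma)$ contains at most four rays outside $P$. The ``moreover'' part is an immediate consequence of \cref{lemma: not too many CSPR} (for distinct centrally symmetric primitive collections, $|P|+|Q|-2\le n$, forcing $|Q|\le 3$), which you essentially re-derive by hand at the end.

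The gap is your claim that $\minRC(X)\ge 3$ forces $\varphi$ to be a genuine $\PP^{n-2}$-bundle, i.e.\ that ``any such degeneration \dots produces such a short relation.'' The degenerate fibres of $\varphi$ are governed by short primitive relations that need not be centrally symmetric, so the hypothesis $\minRC(X)\ge 3$ does not exclude them. Concretely, \cref{thm:classification_m>=n-2}(3)(b) exhibits toric Fano manifolds with $\minRC(X)=n-2\ge 4$ and $\rho(X)=3$ obtained by blowing up $\PP_{\PP^2}\big(\cO_{\PP^2}(1)\oplus\cO_{\PP^2}^{\oplus n-2}\big)$ along a codimension-$2$ centre: their fans have four extra rays $v_1,v_2,y,z$ with $\pi(y)=\pi(z)$ (coming from the primitive relation $x_0+z=y$, which has order $2$ but is \emph{not} centrally symmetric), so the extra rays are not in bijection with the rays of $S$, $\rho(X)\ne\rho(S)+1$, and $X$ is a $\PP^{n-2}$-bundle only over the open set $X\setminus(V(y)\cup V(z))$, not globally. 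Your half-plane argument only controls subsets of extra rays whose images \emph{sum to zero}; it says nothing about two extra rays with proportional images, which is exactly the configuration that occurs. On top of this, the exclusion of $\rho(S)\in\{3,4\}$ --- which you yourself flag as the main obstacle --- is only sketched, and it is precisely where the paper's detailed primitive-relation analysis does its work. As written, the proposal does not establish $\rho(X)\le 3$.
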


Using Theorem~\ref{theorem: P n-2 implies rho<=3} and Batyrev's description of smooth projective toric varieties with Picard rank $3$, we are able to classify $n$-dimensional smooth toric Fano varieties with $m(X) = n-2$. There are eight distinct isomorphism classes when $n \geq 6$, which can be explicitly described. The following statement summarizes the classification of toric Fano manifolds with $\minRC(X) \geq \dim(X)-2$. 

\begin{Thrm}\label{thm:classification_m>=n-2} 
    We have the following classification of smooth toric Fano varieties with $\minRC(X) \geq \dim(X)-2$.
    \begin{enumerate}
        \item The only $n$-dimensional smooth toric Fano variety $X$ with $\minRC(X) = n$ is $\PP^n$. 
        \item For $n \geq 3$, the only $n$-dimensional smooth toric Fano variety $X$ with $\minRC(X) = n-1$ is the blowup of $\PP^n$ along a linear $\PP^{n-2}$.
        \item For $n \geq 6$, there are eight distinct isomorphism classes of $n$-dimensional smooth toric Fano varieties $X$ with $\minRC(X) = n-2$. Namely:
            \begin{enumerate}
                \item  $X=\PS_S(\mathcal{E})$ is a $\PS^{n-2}$-bundle over a toric surface $S$, where 
                $(S,\mathcal{E})$ is one of the following: 
                    \begin{itemize}
                     \item $S=\PS^2$ and $\mathcal{E}= \mathcal{O}_{\PS^2}(1) \oplus \mathcal{O}_{\PS^2}^{\oplus n-2} $,
                     \item $S=\PS^2$ and  $\mathcal{E}= \mathcal{O}_{\PS^2}(1) \oplus \mathcal{O}_{\PS^2}(1) \oplus \mathcal{O}_{\PS^2}^{\oplus n-3} $,
                     \item $S=\PS^2$ and  $\mathcal{E}= \mathcal{O}_{\PS^2}(2) \oplus \mathcal{O}_{\PS^2}^{\oplus n-2} $,
                     \item $S=\PS^1 \times \PS^1$ and $\mathcal{E}= \mathcal{O}_{\PS^1 \times \PS^1}(1,1) \oplus \mathcal{O}_{\PS^1 \times \PS^1}^{\oplus n-2} $,
                     \item $S=\PS^1 \times \PS^1$ and $\mathcal{E}= \mathcal{O}_{\PS^1 \times \PS^1}(1,0) \oplus \mathcal{O}_{\PS^1 \times \PS^1}(0,1) \oplus \mathcal{O}_{\PS^1 \times \PS^1}^{\oplus n-3} $,
                     \item $S=\F_1$ and $\mathcal{E}= \mathcal{O}_{\F_1}(e+f) \oplus \mathcal{O}_{\F_1}^{\oplus n-2}$, where $e\subset \F_1$ is the $-1$-curve, and $f\subset \F_1$ is a fiber of $\F_1\to \PS^1$.
                    \end{itemize}
            In the first three cases, $\rho(X)=2$, while in the latter three cases, $\rho(X)=3$.    
                \item Let $Y\simeq \PS_{\PS^2}\big(\mathcal{O}_{\PS^2}(1) \oplus \mathcal{O}_{\PS^2}^{\oplus n-2}\big)$ be the blowup of $\PP^n$ along a linear subspace $L=\PP^{n-3}$, and denote by $E\subset Y$ the exceptional divisor. 
                Then $X$ is the blowup of $Y$ along a codimension $2$ center $Z\subset Y$, where:
                \begin{itemize}
                     \item $Z$ is the intersection of $E$ with the strict transform of a hyperplane of $\PP^n$ containing the linear subspace $L$, or
                     \item $Z$ is the intersection of the strict transforms of two hyperplanes of $\PP^n$, one containing the linear subspace $L$, and the other one not containing it. 
                \end{itemize}
            In both cases, $\rho(X)=3$.
            \end{enumerate}
    \end{enumerate}
\end{Thrm}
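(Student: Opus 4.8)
The plan is to reduce everything to primitive-relation combinatorics and then to invoke the structural results cited in the introduction. Part (1) is classical: a centrally symmetric primitive relation of order $n+1$ forces the fan to have $n+1$ rays summing to zero with no other relations available for the missing walls, which pins down the fan of $\PP^n$; alternatively one quotes that $\PP^n$ is the only smooth projective toric variety with $\rho=1$. Part (2) is exactly \cite[Proposition 3.8]{CFH} together with the observation recorded in the excerpt that two of the three varieties there also carry a centrally symmetric relation of order $2$, so only the blowup of $\PP^n$ along a linear $\PP^{n-2}$ survives with $\minRC(X)=n-1$; I would simply cite this and check that this blowup indeed has $\minRC=n-1$ (its two primitive relations have orders $n-1$ and $2$... wait, one must verify there is no order-$2$ or order-$1$ relation, which follows since $\rho=2$ and the two primitive collections are explicit).

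The substance is Part (3). First I would apply Theorem \ref{theorem: P n-2 implies rho<=3}: any $X$ with $\minRC(X)=n-2$ has, by definition, a centrally symmetric primitive relation of order $n-1$, and the theorem then gives $\rho(X)\le 3$ and that this is the \emph{only} centrally symmetric primitive relation. So the classification splits into the cases $\rho(X)=2$ and $\rho(X)=3$. For $\rho(X)=2$ I would use Kleinschmidt's classification of smooth projective toric varieties of Picard rank $2$: every such $X$ is a projectivized split bundle $\PP_S(\mathcal{E})$ over a projective space, here necessarily $\PP_{\PP^2}(\mathcal{O}(a_1)\oplus\mathcal{O}(a_2)\oplus\mathcal{O}^{\oplus n-2})$ after normalization; imposing that the unique centrally symmetric primitive relation has order exactly $n-1$ (so the fiber is $\PP^{n-2}$), that $X$ is Fano (a numerical inequality on the $a_i$), and that $\minRC(X)$ is not smaller (no order $\le n-3$ centrally symmetric relation) cuts the list down to the three bundles $\mathcal{O}(1)\oplus\mathcal{O}^{\oplus n-2}$, $\mathcal{O}(1)^{\oplus 2}\oplus\mathcal{O}^{\oplus n-3}$, $\mathcal{O}(2)\oplus\mathcal{O}^{\oplus n-2}$ over $\PP^2$. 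For $\rho(X)=3$ I would invoke Batyrev's description of the fans of smooth projective toric varieties with Picard rank $3$ (in terms of at most five primitive collections and their relations), translate the constraint ``unique centrally symmetric primitive relation, of order $n-1$'' into Batyrev's parameters, add the Fano condition, and enumerate; this should produce the remaining five bundle cases over $\PP^1\times\PP^1$ and $\F_1$, plus the two genuinely non-bundle blowups of $Y=\PP_{\PP^2}(\mathcal{O}(1)\oplus\mathcal{O}^{\oplus n-2})$ described in the statement. Finally, for each of the eight fans produced I would identify it geometrically — recognizing the split-bundle ones directly from Kleinschmidt/Batyrev data, and for the two blowups exhibiting the star subdivision of the fan of $Y$ along the appropriate codimension-$2$ cone and matching it with the stated centers $Z$ — and check $\minRC(X)=n-2$ on the nose by listing all primitive collections.

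The main obstacle will be the bookkeeping in the $\rho(X)=3$ case: Batyrev's classification involves several shapes of fans and a handful of integer parameters, and one must carefully impose ``the order-$(n-1)$ centrally symmetric relation is the \emph{only} one'' (ruling out a second centrally symmetric relation of order $2$ or $3$, which is exactly what Theorem \ref{theorem: P n-2 implies rho<=3} arranges but must be re-used to prune candidates), together with smoothness and the Fano inequalities, without missing or double-counting an isomorphism class. The risk of arithmetic slips in the Fano condition and in verifying that no shorter centrally symmetric relation appears is real, so I would cross-check the final list against Table~\ref{table_m} in low dimensions (the counts $\#(m=n-2)$ being $1,1,1$ for $n=4,5$ and... the table shows these small cases where the eightfold list has not yet stabilized) and against the computer enumeration in \cref{app}.
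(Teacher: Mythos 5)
Your proposal follows essentially the same route as the paper: parts (1) and (2) via \cite[Proposition 3.8]{CFH} and the discussion in \cref{rem:CFH}, and part (3) by first invoking \cref{theorem: P n-2 implies rho<=3} to get $\rho(X)\le 3$ and uniqueness of the centrally symmetric relation, then applying Kleinschmidt's classification for $\rho=2$ (\cref{decomposition-rank 2}) and Batyrev's rank-$3$ structure theorem for $\rho=3$ (\cref{decomposition-rank 3}), exactly as in Section~\ref{sec:m=n-2}. The only inaccuracy is your aside about the table counts (for $n=4$ the entry $\#(m=n-2)=\#(m=2)$ is $15$, not $1$), which does not affect the argument.
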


\begin{Cor} \label{cor:2Fano}
    The projective space $\PP^n$ is the only smooth $n$-dimensional toric $2$-Fano variety with $\minRC(X) 
    \in \{ 1, n-2, n-1, n \}$.
\end{Cor}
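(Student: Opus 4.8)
The plan is to deduce \cref{cor:2Fano} directly from the structural results that precede it, namely \cref{thm:m=1_not_2-Fano} and the classification in \cref{thm:classification_m>=n-2}, together with the general principle recalled in the introduction: if a smooth projective toric variety $X$ admits a centrally symmetric primitive relation of order $m+1$ with $m \geq 1$, it carries a $\PS^m$-bundle structure $X^\circ \to T$ on a dense open $X^\circ$, and whenever $\dim T \geq 1$ and $X \setminus X^\circ$ has codimension at least $2$, one can produce a complete surface $S \subset X^\circ$ with $\ch_2(T_X)\cdot S \leq 0$, so that $X$ is not $2$-Fano. So the corollary is really a case analysis over the four listed values of $\minRC(X)$.

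First I would dispose of $\minRC(X)=1$: this is exactly \cref{thm:m=1_not_2-Fano}, which says such an $X$ is never $2$-Fano, so no toric $2$-Fano manifold has $\minRC(X)=1$ (in particular $\PP^n$ itself has $\minRC(\PP^n)=n$, consistent with the statement). Next, for $\minRC(X)=n$, part (1) of \cref{thm:classification_m>=n-2} gives $X=\PP^n$, which is indeed $2$-Fano (it is $n$-Fano), so here the unique $2$-Fano example is $\PP^n$. For $\minRC(X)=n-1$ with $n\geq 3$, part (2) identifies $X$ as the blowup of $\PP^n$ along a linear $\PP^{n-2}$; I would then check that this variety is not $2$-Fano — either by a direct Chern character computation on an exceptional surface, or by invoking the $\PS^{n-1}$-bundle structure on a big open subset coming from the order-$n$ centrally symmetric relation that \cite[Proposition 3.8]{CFH} attributes to it (as noted in the paragraph preceding \cref{theorem: P n-2 implies rho<=3}, this variety also admits a centrally symmetric primitive relation of order $2$, which makes the surface construction available).

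The substantive case is $\minRC(X)=n-2$ with $n\geq 6$, where \cref{thm:classification_m>=n-2}(3) lists eight explicit isomorphism classes, all of Picard rank $2$ or $3$, and each of which is either a genuine $\PS^{n-2}$-bundle over a toric surface $S$ (cases (a)) or a blowup of such a bundle along a codimension-$2$ center (cases (b)). For each of the eight, $X$ carries a $\PS^{n-2}$-bundle structure on a big open subset with base of dimension $\geq 2 \geq 1$: in cases (a) this is the global bundle map $X=\PS_S(\mathcal{E})\to S$ itself, and in cases (b) it is the composition of the blowdown $X\to Y$ with the bundle map $Y=\PS_{\PS^2}(\mathcal{O}(1)\oplus\mathcal{O}^{\oplus n-2})\to \PS^2$, which is a morphism away from a codimension-$\geq 2$ locus. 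Hence the surface construction of the introduction applies and none of the eight is $2$-Fano. Combining the four cases, the only toric $2$-Fano manifold with $\minRC(X)\in\{1,n-2,n-1,n\}$ is $\PP^n$.

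I expect the main obstacle to be bookkeeping rather than conceptual: one must verify for each of the eight varieties in (3), and for the variety in (2), that the relevant $\PS^m$-bundle structure really does extend over a locus of codimension $\geq 2$ (equivalently, that the open set $X^\circ$ over which the primitive relation gives a projective bundle is big), since the clean dichotomy "$X$ not $2$-Fano" requires precisely this codimension hypothesis. For the honest bundles in cases (a) this is immediate ($X^\circ = X$), and for $n\geq 6$ the bundle has fiber dimension $n-2\geq 4$, so the base and the relevant exceptional loci are comfortably of codimension $\geq 2$; for the blowup cases (b) and for (2) one checks that the center of the blowup, hence the indeterminacy of the rational bundle map, sits in codimension $\geq 2$. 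Once that is in place, the surface $S$ and the inequality $\ch_2(T_X)\cdot S\leq 0$ are produced exactly as in \cite{AraujoCastravet2012} and the references cited in the introduction, and the corollary follows.
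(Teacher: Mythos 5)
Your case division is the right one, and the $\minRC(X)=1$ and $\minRC(X)=n$ cases are handled exactly as the paper does. But for the cases $\minRC(X)\in\{n-2,n-1\}$ your argument rests on a statement the paper never proves: the claim from the introduction that a $\PS^m$-bundle structure $X^\circ\to T$ on a big open subset with $\dim T\geq 1$ yields a surface $S$ with $\ch_2(T_X)\cdot S\leq 0$. That claim is presented there only as the motivating \emph{strategy}; the paper actually establishes it solely for $m=1$ (\cref{lem:construction S in Y} and \cref{thm:S.chY}, which take $S=\pi^{-1}(\pi(C))$ for a $\PS^1$-bundle $\pi$). For the $\PS^{n-2}$-bundles of \cref{thm:classification_m>=n-2}(3) the preimage of a curve is $(n-1)$-dimensional, not a surface, so the construction and the computation in \cref{thm:S.chY} do not carry over; you would need a genuinely new argument to produce and evaluate the surface. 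There is also a factual slip: the blowup of $\PP^n$ along a linear $\PP^{n-2}$ does \emph{not} admit a centrally symmetric primitive relation of order $2$ --- if it did, it would have $\minRC=1$, contradicting $\minRC=n-1$; the two varieties in \cite[Proposition 3.8]{CFH} with an order-$2$ relation are the \emph{other} two, which is precisely why this blowup is the unique variety with $\minRC(X)=n-1$.

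The paper closes these cases by an entirely different and much shorter route, which you do not use: the classification results show that every $X$ with $\minRC(X)\geq n-2$ has $\rho(X)\leq 3$ (this is the content of \cref{theorem: P n-2 implies rho<=3} and \cref{thm:classification_m>=n-2}), and by the Sato--Suyama theorem recalled in \cref{rmk_rho<=3}, projective spaces are the only toric $2$-Fano manifolds with Picard rank at most $3$. Combining this with \cref{thm:m=1_not_2-Fano} for $\minRC(X)=1$ gives the corollary with no new Chern character computations. If you want to avoid citing Sato--Suyama, you must first extend \cref{thm:S.chY} to higher-dimensional fibers (or do an explicit $\ch_2$ computation on each of the finitely many varieties in the classification); as written, your proof has a gap at exactly this point.
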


\medskip

This paper is organized as follows. In Section~\ref{section:background}, we review some results from toric geometry and fix notation. In particular, we discuss centrally symmetric primitive relations on  smooth projective toric varieties, describing explicitly their open subsets admitting a projective space bundle structure (Proposition~\ref{prop:Pk bundle}).
In Section~\ref{sec:m=1}, we study smooth toric Fano varieties with $\minRC(X)=1$, and prove Theorem~\ref{thm:m=1_not_2-Fano}.
In Section~\ref{sec:P n-2 implies rho<=3}, we investigate smooth projective $n$-dimensional toric varieties admitting a centrally symmetric primitive relation of order $n-1$, and prove Theorem~\ref{theorem: P n-2 implies rho<=3}.
In Section~\ref{sec:m=n-2}, we use this result, together with Batyrev's description of smooth projective toric varieties with Picard rank $3$, to prove Theorem~\ref{thm:classification_m>=n-2}. 
\medskip

\noindent {\bf{Acknowledgements.}} This collaboration started as a working group at the  ICERM ``Women in Algebraic Geometry Collaborative Research Workshop'' in July 2020.
A great part of this work was developed during the follow-up ``Collaborate@ICERM'' meeting in May 2022. We thank ICERM for the financial support and the great working conditions provided to us during our visit. 
We are very grateful to Cinzia Casagrande and Milena Hering for many enlightening discussions and explanations about  toric varieties, and to Will Reynolds for running his code and providing us with precious data about primitive relations of toric Fano manifolds. 

Carolina Araujo was partially supported by CAPES/COFECUB, CNPq and FAPERJ Research Fellowships. Roya Beheshti was supported by 
NSF grant DMS-2101935. 
Ana-Maria Castravet was partially supported by the ANR 20-CE40-0023 grant \emph{FanoHK}.
Enrica Mazzon was supported by the collaborative research center SFB 1085 \emph{Higher Invariants - Interactions between Arithmetic Geometry and Global Analysis} funded by the Deutsche Forschungsgemeinschaft. Nivedita Viswanathan was supported by the EPSRC New Horizons Grant No.EP/V048619/1.
 
%%%%%%%%%%%%%%%%%%%%%%%%%%%%%%%%%%%%%%%%%%%%%%%%%%%
%
%              SECTION 2
%
%%%%%%%%%%%%%%%%%%%%%%%%%%%%%%%%%%%%%%%%%%%%%%%%%%%

\section{Primitive collections} \label{section:background}

\subsection{Notation and background}
A toric variety is a normal complex variety $X$ that contains a torus $T=(\C^*)^n$ as a dense open subset, together with an action of $T$ on $X$ that extends the natural action of $T$ on itself. 
There is a one-to-one correspondence between $n$-dimensional toric varieties and \emph{fans} in $\Q^n$.
Let $N$ be a free abelian group of rank $n$, and consider the vector space $N_\Q = N \otimes_{\Z} \Q$. 
A \emph{fan}  in $N_\Q$ is a nonempty finite collection $\Sigma$ of strongly convex polyhedral cones in $N_\Q$ such that every face of a cone in $\Sigma$ is also a cone in $\Sigma$, and 
the intersection of two cones in $\Sigma$ is a face of each.
We write $\delta \prec \tau$ to express that $\delta$ is a face of $\tau$. 
One-dimensional cones in $\Sigma$ are called rays, and
each ray is generated by a primitive vector in $N$. The set of primitive vectors of $N$ generating rays of $\Sigma$ is denoted by $G(\Sigma)$.
We will write a cone $\tau \in \Sigma$ in terms of its primitive generators, $\tau= \langle v_1, \dots, v_l \rangle$, 
saying that the $v_i$'s generate $\tau$, and setting $G(\tau) \coloneqq \{v_1, \dots, v_l \} \subseteq G(\Sigma)$.

We denote by $X_\Sigma$ the toric variety corresponding to a fan $\Sigma$.
Conversely, given a toric variety $X$, we denote by $\Sigma_X$ the fan associated to $X$. 
There is a one-to-one inclusion-reversing correspondence between cones in $\Sigma$ and $T$-orbit closures in $X_\Sigma$.
Given a cone $\tau \in \Sigma$, we write $V(\tau) \subset X_\Sigma$ for the corresponding $T$-orbit closure, or $V(v_1,\dots,v_l)$ when $G(\tau) = \{ v_1,\dots,v_l\}$.
Note that $\dim(\tau)=\codim_{X_\Sigma}V(\tau)$.
We refer to \cite{Fulton} and \cite{CoxLittleSchenck2011} for the background on toric varieties.  

In this paper, we are mostly interested in \emph{smooth} and \emph{proper} toric varieties.
The smoothness conditions translates into the fan $\Sigma$ being \emph{regular}, i.e., for each cone $\tau\in \Sigma$, the set of generators $G(\tau)$ is part of a basis of $N$ (\cite[Definition~1.2.16]{CoxLittleSchenck2011}).
The properness condition translates into the fan $\Sigma$ being \emph{complete}, i.e., its support being the whole $N_\Q$. In what follows, smooth and proper toric varieties will be simply called \emph{toric manifolds}. We would like to classify toric $2$-Fano manifolds. 
Given a toric manifold $X$, there is an exact sequence (\cite[Theorem~8.1.1]{CoxLittleSchenck2011}):
$$
0\to \Omega_X^1 \to N^{\vee}\otimes_{\ZZ} \cO_X \to \bigoplus_{v\in G(\Sigma_X)}\cO_{V(v)}\to 0 \, ,
$$
from which one easily computes:
$$
c_1(X)\, = \sum_{v \in G(\Sigma_X)} V(v) \quad  \text{ and }  \quad
\chn_2(X)\, = \,\frac{1}{2} \sum_{v \in G(\Sigma_X)} V(v)^2.
$$

\begin{Def}(\cite[Definition 2.6]{Bat91})
Let $\Sigma$ be a regular complete fan in $N_\Q$.
A \emph{primitive collection} $P \subseteq G(\Sigma)$ is a nonempty set of primitive vectors of $N$ that does not generate a cone of $\Sigma$, but such that any proper subset of $P$ does. % generate a cone of $\Sigma$;
Equivalently, $P=\{v_1, \dots, v_r\} \subseteq G(\Sigma)$ is a primitive collection if and only if 
\[  \langle v_1, \dots , v_r \rangle \notin \Sigma \quad \quad \text{and} \quad \quad \langle v_1, \dots,\check{v_i}, \dots , v_r \rangle \in \Sigma \] 
for any $i=1,\dots, r$.
We denote by $\PC(\Sigma)$ the set of primitive collections of $\Sigma$. For a toric manifold $X$, we will talk about primitive collections of $X$ and write $\PC(X)$, meaning $\PC(\Sigma_X)$.
\end{Def}

\begin{Def}
Let $\Sigma$ be a regular complete fan in $N_\Q$.
Given a primitive collection
$P=\{v_1, \dots,v_r\} \in \PC(\Sigma)$, 
let $\sigma(P)= \langle w_1, \dots , w_s \rangle$ be the minimal cone  of $\Sigma$ such that $v_1 + \cdots + v_r \in \sigma(P)$. 
Then there is a relation
\[  r(P)\colon \,v_1 + \cdots + v_r = \mu_1 w_1 + \cdots + \mu_s w_s, \]
where $\mu_j \in \Z_{\geq 0}$ for $j=1, \dots, s$. We call  $r(P)$ the \emph{primitive relation} associated to $P$. We define the \emph{order} of $P$ as $\ord(P)=|P|=r$, while the \emph{degree} of $P$ as $\deg(P)=r- \sum_{j=1}^{s}\mu_j$.
\end{Def}

By \cite[Proposition 3.1]{Bat91}, for any primitive collection $P$, we have $P \cap \sigma(P) = \varnothing$. In particular, $\{v_1, \dots, v_r\} \cap \{w_1, \dots ,w_s\} = \varnothing$.

\begin{Def}\label{def:CCPC}
Let $\Sigma$ be a regular complete fan in $N_\Q$.
A primitive collection $P=\{x_0,\dots, x_k\}$ of $\Sigma$ is called \emph{centrally symmetric} if $\sigma(P)=\{0\}$, i.e. 
\[
    r(P) \colon  x_0 + \dots + x_k=0.
\]
\end{Def}

\begin{Lem}
\label{lemma: CSPR do not intersect}
   Let $\Sigma$ be a regular complete fan in $N_\Q$, and let $P$, $Q$ be two distinct centrally symmetric primitive collections. Then $P \cap Q = \varnothing$.
\end{Lem}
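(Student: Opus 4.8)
The plan is to argue by contradiction, assuming there is a vector $x$ lying in both $P$ and $Q$, and to derive a violation of the defining properties of a fan (or of a primitive collection). Write $P = \{x, p_1, \dots, p_k\}$ and $Q = \{x, q_1, \dots, q_\ell\}$ with the two centrally symmetric relations
\[
    x + p_1 + \cdots + p_k = 0, \qquad x + q_1 + \cdots + q_\ell = 0.
\]
Since $P \neq Q$ but both contain $x$, the sets $\{p_1,\dots,p_k\}$ and $\{q_1,\dots,q_\ell\}$ are distinct, so without loss of generality there is some $p_i$, say $p_1$, not among the $q_j$'s. Subtracting the two relations gives $p_1 + \cdots + p_k = q_1 + \cdots + q_\ell$, which is one natural linear relation to exploit; the other key object is $-x = p_1 + \cdots + p_k = q_1 + \cdots + q_\ell$, a single vector expressed in two ways.

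First I would use the primitive collection property to control which cones are available. Because $P$ is primitive, every proper subset of $P$ generates a cone of $\Sigma$; in particular $\langle p_1, \dots, p_k \rangle \in \Sigma$ (this is $P \setminus \{x\}$), and likewise $\langle q_1, \dots, q_\ell \rangle \in \Sigma$. Moreover $-x$ is a primitive generator of a ray of $\Sigma$ only if $\Sigma$ happens to contain it — which need not hold — so instead I would locate the minimal cone $\tau \in \Sigma$ containing the point $-x = p_1 + \cdots + p_k$. Since $\langle p_1,\dots,p_k\rangle \in \Sigma$ and $-x$ is a positive combination of its generators, $\tau$ is a face of $\langle p_1,\dots,p_k\rangle$ whose generators $G(\tau)$ are exactly those $p_i$ appearing with positive coefficient, i.e.\ $G(\tau) = \{p_1,\dots,p_k\}$ (all coefficients are $1$). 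By the same reasoning applied to $Q$, $\tau$ is also the minimal cone containing $-x$ as seen from $\langle q_1,\dots,q_\ell\rangle$, forcing $G(\tau) = \{q_1,\dots,q_\ell\}$. Hence $\{p_1,\dots,p_k\} = \{q_1,\dots,q_\ell\}$, contradicting the choice of $p_1$.

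The step I expect to be the main obstacle is the claim that the minimal cone of $\Sigma$ containing $-x$ has generator set equal to the \emph{entire} set $\{p_1,\dots,p_k\}$, and similarly for the $q_j$'s — this is what pins down the two sets and produces the contradiction. The subtlety is that a priori $-x$ could lie in a proper face of $\langle p_1,\dots,p_k\rangle$, but this cannot happen: in a regular (simplicial) cone, a point of the form $\sum_i \mu_i p_i$ with all $\mu_i > 0$ lies in the relative interior, hence the minimal cone containing it is the whole cone; here all coefficients are $1$, so $-x$ is interior to $\langle p_1,\dots,p_k\rangle$, and the minimal cone containing it is $\langle p_1,\dots,p_k\rangle$ itself. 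The same argument gives $\langle p_1,\dots,p_k\rangle = \langle q_1,\dots,q_\ell\rangle$ as cones of $\Sigma$, hence equal generator sets, and we are done. (One should also record the trivial case $-x = 0$, i.e.\ $k = 0$, which forces $x = 0 \notin N \setminus \{0\}$ and is excluded since primitive generators are nonzero.)
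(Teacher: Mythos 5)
Your proof is correct and follows essentially the same route as the paper's: cancel the common generator from the two centrally symmetric relations, note that $P\setminus\{x\}$ and $Q\setminus\{x\}$ span cones of $\Sigma$, and observe that the point $-x$ lies in the relative interior of both, forcing the two cones (hence their generator sets) to coincide. Your explicit justification that a sum of the generators with all coefficients positive lies in the relative interior is exactly the content behind the paper's one-line remark that ``interiors of two distinct cones intersect,'' so there is no substantive difference.
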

\begin{proof}
Write $r(P) \colon x_0 + \cdots + x_k = 0$ and
$r(Q) \colon y_0 + \cdots + y_l = 0$.
Assume that $P \cap Q \neq \varnothing$, then without loss of generality we may assume that $x_0 = y_0$.
But then subtracting this vector from both relations, we get
\[ x_1 + \cdots + x_k = y_1 + \cdots + y_l, \]
which shows that interiors of two distinct cones intersect. This is a contradiction.
\end{proof}

\begin{Lem}
\label{lemma: not too many CSPR}
   Let $\Sigma$ be a regular complete fan in $N_\Q$, and let $P$, $Q$ be two distinct centrally symmetric primitive collections. Then
   $\Span P \cap \Span Q = \{0\}$,
   in particular $|P| + |Q| - 2 \leq \dim N_\QQ$.
\end{Lem}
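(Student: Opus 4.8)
The plan is to argue by contradiction: suppose $0 \neq v \in \Span P \cap \Span Q$, where $P = \{x_0,\dots,x_k\}$ and $Q = \{y_0,\dots,y_l\}$ with centrally symmetric relations $x_0 + \cdots + x_k = 0$ and $y_0 + \cdots + y_l = 0$. By Lemma~\ref{lemma: CSPR do not intersect}, $P \cap Q = \varnothing$. First I would use the defining property of a primitive collection to understand $\Span P$ concretely: any proper subset of $P$ generates a cone of $\Sigma$, so in particular $\{x_1,\dots,x_k\}$ generates a cone $\tau_P \in \Sigma$, which is regular (the $x_i$ are part of a basis of $N$), hence $x_1,\dots,x_k$ are linearly independent; combined with the relation $x_0 = -(x_1 + \cdots + x_k)$, this shows $\Span P = \Span\{x_1,\dots,x_k\}$ has dimension exactly $k$. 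Similarly $\Span Q$ has dimension $l$.

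Next, the key step: I would show that any nonzero $v$ in the intersection forces a relation between interiors of cones, contradicting the fan axioms. Write $v = \sum_{i=1}^k a_i x_i = \sum_{j=1}^l b_j y_j$ using the linearly independent generators described above. The idea is to split each sum into its positive and negative parts and rearrange so that all coefficients become nonnegative on both sides. Concretely, after moving terms with negative coefficients across (and possibly adding a suitable multiple of the relation $x_0 + \cdots + x_k = 0$ to replace some $x_i$'s, and similarly on the $Q$ side), one obtains an equality $\sum_{i \in I} c_i x_i = \sum_{j \in J} d_j y_j$ with all $c_i, d_j \geq 0$, not all zero, where $I \subseteq \{0,\dots,k\}$ is a proper subset and $J \subseteq \{0,\dots,l\}$ is a proper subset. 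Since proper subsets of $P$ and of $Q$ generate cones $\sigma_I, \sigma_J \in \Sigma$, this common vector lies in $\sigma_I \cap \sigma_J$, which must be a common face; but then it lies in a face of $\sigma_I$ not using all of $\{x_i : i \in I\}$, so some $c_i$ can be taken zero, and iterating (or using that the generators involved are linearly independent within each cone) we are forced down to $v = 0$ — or more cleanly, the vector has a representation with nonnegative coefficients in the regular cone $\sigma_I$ that is \emph{unique}, and likewise in $\sigma_J$, and since $\{x_i\}_{i\in I} \cap \{y_j\}_{j \in J} = \varnothing$ by Lemma~\ref{lemma: CSPR do not intersect}, uniqueness forces all coefficients to vanish.

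I expect the main obstacle to be the bookkeeping in the rearrangement step: cleanly producing the nonnegative-coefficient representatives $\sum_{i\in I} c_i x_i = \sum_{j\in J} d_j y_j$ with $I, J$ \emph{proper} subsets. The subtlety is that naively moving negative terms across could produce the full set $\{x_0,\dots,x_k\}$ on one side; one resolves this by observing that if all of $x_0,\dots,x_k$ appeared with positive coefficient one could subtract a positive multiple of $x_0 + \cdots + x_k = 0$ to kill at least one term, so without loss of generality $I$ (and $J$) are proper. Once the representation is in this form, the contradiction is immediate from the uniqueness of coordinates in a regular (simplicial, smooth) cone together with the disjointness $P \cap Q = \varnothing$. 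Finally, $\Span P \cap \Span Q = \{0\}$ gives $\dim \Span P + \dim \Span Q \leq \dim N_\QQ$, i.e. $k + l \leq n$, which is exactly $|P| + |Q| - 2 \leq \dim N_\QQ$.
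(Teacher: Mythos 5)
Your proposal is correct and follows essentially the same route as the paper: write $v$ in both spans, use the centrally symmetric relations to make all coefficients nonnegative with at least one vanishing, and conclude that $v$ lies in the intersection of two cones of $\Sigma$ whose generator sets are disjoint by Lemma~\ref{lemma: CSPR do not intersect}, forcing $v=0$; the dimension bound then follows since $\dim\Span P = |P|-1$. The paper's version is just a more compressed account of the same bookkeeping you describe.
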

\begin{proof}
Write $r(P) \colon x_0 + \cdots + x_k = 0$ and
$r(Q) \colon y_0 + \cdots + y_l = 0$.
Take any vector $v \in \Span P \cap \Span Q$, so we can write it as 
\[
    v = \sum a_i x_i = \sum b_j y_j.
\]
By possibly adding $r(P)$ and $r(Q)$ to the sums, we can get that all $a_i, b_j \geq 0$, and up to relabelling the $a_i$, $b_j$, we can assume $a_0 = b_0 = 0$. But this shows that 
$v$ is in the intersection of two cones,
$\langle x_1, \dots, x_k \rangle \cap \langle y_1, \dots, y_l \rangle$, 
and the sets of generators are disjoint by \cref{lemma: CSPR do not intersect}, 
so $\langle x_1, \dots, x_k \rangle \cap \langle y_1, \dots, y_l \rangle = \{ 0 \}$ and
$v = 0$.

The last claim follows from considering the dimensions of $\Span P$ and $\Span Q$.
\end{proof}

Let $A_1(X_\Sigma)$ be the group of algebraic 1-cycles on $X_\Sigma$ modulo numerical equivalence, and set $\mathcal{N}_1(X_\Sigma)=A_1(X_\Sigma) \otimes_{\Z} \Q$. The Mori cone $\NE(X_\Sigma) \subset \mathcal{N}_1(X_\Sigma)$ is the cone generated by the classes of effective curves.
A primitive integral class generating an extremal ray of $\NE(X_\Sigma)$ is called an \emph{extremal class}.
There is an exact sequence:
\begin{diagram}
    0 
    & \rTo
    & A_1(X_\Sigma)
    & \rTo
    & \Z^{G(\Sigma)}
    & \rTo
    & N 
    & \rTo
    0 , \\
    & 
    & [C]
    & \rTo
    & \big( C \cdot V(v)\big)_{v \in G(\Sigma)}
    & 
    & 
    & 
    & 
    & 
     \\
    & 
    & 
    &
    & \big(\nu_v\big)_{v \in G(\Sigma)}
    & \rTo
    & \sum_{v \in G(\Sigma)} \nu_v v.
    & 
    &
    &
     \\
\end{diagram}
Thus the elements of $A_1(X_\Sigma)$ are identified with integral relations between the elements of $G(\Sigma)$. 
If the class $[C]$ corresponds to the relation $\sum_v \nu_v v =0$, then we have $-K_{X_\Sigma} \cdot C = \sum_v \nu_v$. 

\begin{Prop}(\cite[Lemma 1.4]{Cas03b}) \label{prop:effective classes}
Let $\Sigma$ be a regular complete fan in $N_\Q$.
A relation 
\[
\alpha_1 x_1 + \dots + \alpha_l x_l - \beta_1 y_1 - \dots - \beta_m y_m = 0,
\]
with $\alpha_i, \beta_j \in \Z_{>0}$, defines an effective class in $\mathcal{N}_1(X_\Sigma)$ provided that   $\langle y_1, \dots, y_m \rangle$ is a cone of $\Sigma$. 
\end{Prop}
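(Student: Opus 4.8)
The plan is to produce an explicit effective curve realizing the prescribed relation, using the torus-invariant geometry encoded by the cone $\tau = \langle y_1,\dots,y_m\rangle \in \Sigma$. Since $\Sigma$ is complete and regular, $V(\tau)$ is a (nonempty, complete) torus orbit closure, and because $\tau$ is a face of some maximal cone, we can choose a maximal cone $\sigma \in \Sigma$ with $\tau \prec \sigma$. First I would pass to the $1$-dimensional picture: intersect $\sigma$ with a suitable linear subspace, or rather select a wall (codimension-$1$ cone) and the two maximal cones adjacent to it, so as to realize the class of a torus-invariant curve $C = V(\omega)$ for an appropriate codimension-$1$ cone $\omega$. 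Recall from the exact sequence displayed just before the statement that the class of a torus-invariant curve $V(\omega)$ corresponds precisely to the \emph{wall relation} $\sum_{v} (C\cdot V(v))\, v = 0$, and that $-K_{X_\Sigma}\cdot C = \sum_v (C \cdot V(v))$; thus every torus-invariant curve class is given by an integral relation among the $v \in G(\Sigma)$ that is supported, on its "negative side", on the generators of a single cone of $\Sigma$.

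The key point is the converse direction, which is what the proposition asserts: given the relation $\sum \alpha_i x_i - \sum \beta_j y_j = 0$ with all coefficients positive and $\langle y_1,\dots,y_m\rangle \in \Sigma$, we must exhibit it as a nonnegative combination of effective (indeed torus-invariant curve) classes. I would argue by induction on $\sum_i \alpha_i$. If $\sum \alpha_i = 1$, there is a single $x$ on the positive side, and the relation reads $x = \sum \beta_j y_j$ with $y_j \in G(\tau)$; since $\Sigma$ is complete, $x$ lies in some cone, and one shows the minimal such cone has all $y_j$ among its generators, so that $\{x\}\cup\{y_j\}$ (after discarding $y_j$ with $\beta_j$ not appearing) spans a cone missing exactly the ray through $x$ from one of its facets — this is exactly a wall relation, hence effective. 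For the inductive step, pick any $x_{i_0}$ on the positive side; because $\Sigma$ is complete, the ray $\RR_{\geq 0} x_{i_0}$ together with enough of the $y_j$ lies in a cone, and by regularity one can split off a genuine wall relation $x_{i_0} + (\text{part of the } y_j\text{'s, with multiplicities}) = (\text{some generators of a cone})$; subtracting an appropriate positive multiple of this wall relation from the original relation lowers $\sum \alpha_i$ while keeping all positive coefficients nonnegative and keeping the negative side supported on a cone of $\Sigma$ (here one uses that faces of cones in $\Sigma$ are again in $\Sigma$, and that subcones of $\tau$ remain available). The induction then closes, expressing the class as a sum of wall-relation (hence effective) classes.

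The main obstacle I anticipate is the bookkeeping in the inductive step: one must guarantee that after subtracting a wall relation the "negative part" of the remaining relation is still supported on the generators of a single cone of $\Sigma$ (so that the inductive hypothesis applies), and that no coefficient on the positive side goes negative. The clean way to handle this is to choose the wall relation greedily so that it is "dominated" by the current relation — i.e. its coefficients on each $x_i$ and $y_j$ do not exceed the current ones — which is possible precisely because completeness lets us find a cone containing $x_{i_0}$ and the relevant $y_j$'s, and regularity ($G(\sigma)$ part of a basis of $N$) makes the resulting wall relation's coefficients uniquely determined and controllable. Once this dominance is arranged the remaining relation has strictly smaller $\sum \alpha_i$, all coefficients still nonnegative, and negative side supported on a subcone of $\tau$ (a face of a cone of $\Sigma$, hence in $\Sigma$), so \cref{prop:effective classes} follows by induction; the base case and the observation that wall relations give effective (torus-invariant, in fact extremal when the wall is $K$-negative, though we do not need that) curve classes complete the argument.
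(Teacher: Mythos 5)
The paper does not prove this statement: it is imported verbatim from \cite[Lemma 1.4]{Cas03b}, where the argument is by duality with nef divisors. One fixes a maximal cone $\sigma$ containing $\langle y_1,\dots,y_m\rangle$, replaces a nef divisor $D=\sum_v a_v V(v)$ by $D+\operatorname{div}(\chi^u)$ so that $a_v=0$ for all $v\in G(\sigma)$, and uses convexity of the support function to conclude $a_v\geq 0$ for every $v\in G(\Sigma)$; then $D\cdot\gamma=\sum_i\alpha_i a_{x_i}\geq 0$, and since $\NE(X_\Sigma)$ is a closed rational polyhedral cone dual to the nef cone, $\gamma$ is effective. Your proposal takes a genuinely different route --- an explicit decomposition of $\gamma$ into wall relations --- but as written it has a real gap.

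The gap is in the inductive step, and it is essentially the whole content of the lemma. First, the assertion that ``because $\Sigma$ is complete, the ray $\RR_{\geq 0}x_{i_0}$ together with enough of the $y_j$ lies in a cone'' is false: completeness only puts $x_{i_0}$ in \emph{some} maximal cone, which may contain no $y_j$ at all (indeed $\{x_{i_0},y_j\}$ can be a primitive collection for every $j$). Second, even when $\langle x_{i_0},y_{j_1},\dots\rangle\in\Sigma$, a wall relation has the rigid form $u+u'+\sum_{v\in G(\omega)}c_v v=0$ with the $c_v$ determined by the fan and of arbitrary sign; you need a wall relation in which $x_{i_0}$ appears with coefficient $+1$, \emph{all} other positive-coefficient generators lie in $\{y_1,\dots,y_m\}$, and those coefficients are dominated by the $\beta_j$. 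The existence of such a ``dominated'' wall relation is exactly what would make the induction close, and it is asserted (``the clean way to handle this is to choose the wall relation greedily\dots which is possible precisely because\dots'') rather than proved; no mechanism for producing it is given, and proving it amounts to a constructive form of the toric cone theorem, i.e.\ to the lemma itself. Finally, your base case $\sum_i\alpha_i=1$ reads $x=\sum_j\beta_j y_j$ with all $\beta_j>0$, which would place the ray generated by $x$ in the relative interior of the cone $\langle y_1,\dots,y_m\rangle$ --- impossible for distinct generators of a fan --- so that case is vacuous, not ``exactly a wall relation,'' and the induction has no anchor of the kind you describe. I would recommend replacing the whole scheme by the nef-divisor normalization argument above, which is short and avoids all of this bookkeeping.
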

We will usually write the above relation as 
\[
    \alpha_1 x_1 + \dots + \alpha_l x_l = \beta_1 y_1 + \dots + \beta_m y_m.
\]
It follows that primitive relations correspond to effective curve classes.
By abuse of notation, we will identify a primitive relation $r(P)$ with the corresponding curve class. Note that
$\deg(P)=-K_{X_\Sigma} \cdot r(P)$.
In the projective case we have the following description of $\NE (X_\Sigma)$.

\begin{Prop}(\cite[Theorem 2.15]{Bat91})
Let $\Sigma$ be a regular complete fan in $N_\Q$, and assume that 
$X_\Sigma$ is projective. Then the Mori cone is generated by primitive relations: 
\[\NE (X_\Sigma) = \sum_{P \in \PC(X_\Sigma)} \Q_{\geq 0}\,r(P).\]
\end{Prop}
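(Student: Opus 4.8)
The plan is to prove the two inclusions of the claimed equality separately; the inclusion $\supseteq$ is immediate, and the content lies in $\subseteq$. For $\supseteq$: given $P=\{v_1,\dots,v_r\}\in\PC(X_\Sigma)$ with $\sigma(P)=\langle w_1,\dots,w_s\rangle$, the primitive relation $r(P)\colon v_1+\cdots+v_r=\mu_1 w_1+\cdots+\mu_s w_s$ (all $\mu_j\ge 0$) already has the shape required by \cref{prop:effective classes}: after discarding the $w_j$ with $\mu_j=0$, the remaining $w_j$ span a face of $\sigma(P)$, hence a cone of $\Sigma$. So $r(P)\in\NE(X_\Sigma)$, and since $\NE(X_\Sigma)$ is a convex cone, $\sum_P\QQ_{\ge 0}\,r(P)\subseteq\NE(X_\Sigma)$.

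For $\subseteq$, I would first reduce to torus-invariant curves: since $X_\Sigma$ is a smooth (hence simplicial) complete toric variety, $\NE(X_\Sigma)$ is rational polyhedral and generated by the classes $[V(\tau)]$ of the torus-invariant curves, one for each wall (codimension-$1$ cone) $\tau$; this is standard toric intersection theory (see \cite{CoxLittleSchenck2011}). It then suffices to write each $[V(\tau)]$ as a nonnegative combination of primitive relations. Fix a wall $\tau$, lying in exactly the two maximal cones obtained from $\tau$ by adjoining rays $v$ and $v'$; smoothness yields the integral wall relation $v+v'+\sum_{u\in G(\tau)}a_u\,u=0$ with $a_u\in\ZZ$, which is the class $[V(\tau)]$ under the identification of $A_1(X_\Sigma)$ with integral relations among $G(\Sigma)$ recorded just before the statement. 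Let $S^{+}=\{v,v'\}\cup\{u:a_u>0\}$ be the positive support and $S^{-}=\{u:a_u<0\}$ the negative one; they are disjoint, and $S^{-}\subseteq G(\tau)$ spans a face of $\tau$, hence a cone of $\Sigma$. If $S^{+}$ also spanned a cone of $\Sigma$ it would, by the fan axioms and regularity, be linearly independent, making the (nontrivial, since $[V(\tau)]\neq 0$) wall relation impossible; so $S^{+}$ does not generate a cone of $\Sigma$ and therefore contains a primitive collection $P$. Using $\sigma(P)\cap P=\varnothing$ and tracking signs, one checks that $[V(\tau)]-r(P)$ is again effective — its negative support stays inside $S^{-}$, so \cref{prop:effective classes} applies — and then concludes by induction on a suitable monovariant that $[V(\tau)]-r(P)\in\sum_P\QQ_{\ge 0}\,r(P)$, hence so is $[V(\tau)]$.

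The main obstacle is precisely this last step: finding a quantity that strictly decreases under $[V(\tau)]\mapsto[V(\tau)]-r(P)$. Subtracting $r(P)$ lowers the coefficient on each ray of $P\subseteq S^{+}$ by one but can raise the coefficients on $\sigma(P)$, so neither the $\ell^1$-size of the relation nor $-K_{X_\Sigma}\cdot[V(\tau)]$ is visibly monotone (for non-Fano $X_\Sigma$ a primitive relation may even have non-positive degree). One must choose $P\subseteq S^{+}$ carefully, or well-order curve classes more cleverly (lexicographically by support, then by coefficients), and handle the possible overlap of $\sigma(P)$ with $G(\tau)$. A cleaner alternative that bypasses this bookkeeping is toric Mori theory: every extremal ray $R$ of the polyhedral cone $\NE(X_\Sigma)$ admits an extremal contraction, the set $P_R=\{v\in G(\Sigma):V(v)\cdot R<0\}$ is then a primitive collection with $r(P_R)$ spanning $R$, and since $\NE(X_\Sigma)$ is the convex hull of its extremal rays while each $r(P)$ is effective, the two descriptions coincide.
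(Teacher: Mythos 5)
The paper does not prove this Proposition; it is quoted verbatim from Batyrev, so there is no in-paper argument to compare against. Judged on its own terms, your proposal is correct in outline --- the reduction to wall classes and the ``subtract a primitive relation found in the positive support'' step are exactly the skeleton of Batyrev's original proof, and your $\supseteq$ direction is complete --- but it has one genuine gap, which you yourself flag: you never produce a quantity certifying that the subtraction process terminates. That is the load-bearing step. It can be filled without changing your setup: fix an ample divisor $D$ on $X_\Sigma$. There are finitely many primitive collections and each $r(Q)$ is a nonzero effective class, so $\epsilon \coloneqq \min_{Q}D\cdot r(Q)>0$. Your sign-tracking shows $\gamma-r(P)$ is again effective (the coefficients on $P\subseteq S^{+}$ are positive integers and drop by exactly $1$, the coefficients on $\sigma(P)$ only increase, and $P\cap\sigma(P)=\varnothing$, so the negative support stays inside $S^{-}$, which spans a cone); hence $D\cdot(\gamma-r(P))\ge 0$, while $D\cdot\gamma$ drops by at least $\epsilon$ at each step, so the process stops after at most $D\cdot[V(\tau)]/\epsilon$ steps. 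It can only stop at the zero class: a nonzero effective class in your normal form cannot have positive support generating a cone, since then the point $\sum a_iv_i=\sum b_jw_j$ would lie in the relative interiors of two cones of $\Sigma$ with disjoint generator sets, which the fan axioms forbid (the same argument the paper uses to show distinct centrally symmetric primitive collections are disjoint). Note that this is also the argument you actually need at the point where you claim linear independence of $S^{+}$ alone rules out the wall relation; it does not when $S^{-}\neq\varnothing$.

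Your proposed ``cleaner alternative'' does not repair the gap so much as outsource it: the assertion that every extremal ray $R$ of $\NE(X_\Sigma)$ is spanned by a primitive relation, with $P_R=\{v: V(v)\cdot R<0\}$ the corresponding primitive collection, is essentially the difficult half of Batyrev's theorem and itself rests on Reid's analysis of toric extremal contractions. Citing that is legitimate --- the paper does exactly this, citing Batyrev --- but presenting it as the proof is circular in spirit. If you want a self-contained argument, complete your subtraction scheme with the ample-degree monovariant above.
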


\begin{Prop}(\cite[Theorem 2.4]{Reid1983})
\label{prop:extremal forms cones}
Let $\Sigma$ be a regular complete fan in $N_\Q$, and assume that  $X_\Sigma$ is projective. Let $\gamma$ be an extremal class in $\NE(X_\Sigma)$ whose corresponding primitive relation is 
\[
    r(P) \colon \, v_1 + \dots + v_r = \mu_1 w_1 + \dots +\mu_s w_s.
\]
Let $\tau= \langle z_1, \dots, z_l \rangle$ be a cone of $\Sigma$ such that 
$G(\tau) \cap P = G(\tau) \cap G(\sigma(P)) = \varnothing$,
and such that $\langle  \sigma(P), \tau \rangle = \langle w_1, \dots, w_s, z_1, \dots, z_l \rangle$ is a cone of $\Sigma$. Then, for each $i=1, \dots, r$,
\[ \langle P\setminus\{v_i\}, \sigma(P), \tau \rangle = \langle v_1, \dots, \check{v_i}, \dots, v_r,  w_1, \dots, w_s, z_1, \dots, z_l  \rangle \]
is also a cone of $\Sigma$.
\end{Prop}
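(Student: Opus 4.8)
The plan is to reduce the statement to the contraction morphism associated with the extremal ray $\RR_{\geq 0}\gamma$. Since $X_\Sigma$ is projective and $\gamma$ is extremal, by Reid's toric Mori theory there is a proper toric morphism $\varphi\colon X_\Sigma\to Y$ contracting exactly the curves whose class is a nonnegative multiple of $\gamma$; it corresponds to a map of fans, and the combinatorics of $\varphi$ is governed by the primitive relation $r(P)$. Concretely, the wall-crossing data says the following: a cone $\langle v_1,\dots,\check{v_i},\dots,v_r,w_1,\dots,w_s\rangle$ (that is, $\langle P\setminus\{v_i\},\sigma(P)\rangle$) is a cone of $\Sigma$ for every $i$, and these are the maximal-dimensional cones "created" by the primitive relation; more generally the star of $\sigma(P)$ is subdivided in the way dictated by the fan of the relevant weighted blow-up / projective bundle local model. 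The key local model to keep in mind is: near the contracted locus, $\Sigma$ looks like the fan whose rays through $\sigma(P)$ are $v_1,\dots,v_r,w_1,\dots,w_s$ with the single relation $\sum v_i=\sum\mu_j w_j$, and the maximal cones are obtained by deleting one $v_i$.

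First I would set up notation: let $\sigma=\sigma(P)=\langle w_1,\dots,w_s\rangle$, and consider the cone $\sigma'=\langle\sigma,\tau\rangle=\langle w_1,\dots,w_s,z_1,\dots,z_l\rangle$, which is assumed to be in $\Sigma$. The hypothesis $G(\tau)\cap P=\varnothing$ and $G(\tau)\cap G(\sigma)=\varnothing$ guarantees that adjoining $\tau$ does not interfere with the primitive relation. The goal is: for each $i$, $\langle v_1,\dots,\check v_i,\dots,v_r,w_1,\dots,w_s,z_1,\dots,z_l\rangle\in\Sigma$. The natural strategy is induction on $l=\dim\tau$. The base case $l=0$ is precisely the statement that $\langle P\setminus\{v_i\},\sigma(P)\rangle\in\Sigma$, which follows from the star-subdivision description of the extremal contraction (equivalently, from the definition of a primitive collection together with the fact that $v_1+\cdots+v_r$ lies in the relative interior of $\sigma(P)$, so that each $\langle P\setminus\{v_i\},\sigma(P)\rangle$ is "visible" in $\Sigma$; this is essentially \cite[Theorem~2.4]{Reid1983} itself in the minimal case). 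For the inductive step, write $\tau=\langle\tau_0,z_l\rangle$ with $\tau_0=\langle z_1,\dots,z_{l-1}\rangle$. By induction, $\rho_i\coloneqq\langle P\setminus\{v_i\},\sigma(P),\tau_0\rangle\in\Sigma$ for each $i$, and also $\langle\sigma(P),\tau_0\rangle\in\Sigma$ as a face of $\langle\sigma(P),\tau\rangle$. Now I would argue that the ray $z_l$ can be adjoined: $z_l$ generates a ray of $\Sigma$, and $\langle\sigma(P),\tau_0,z_l\rangle=\langle\sigma(P),\tau\rangle\in\Sigma$ by hypothesis, so $z_l$ is compatible with every generator appearing in $\langle\sigma(P),\tau_0\rangle$; combined with $\rho_i\in\Sigma$ one checks that $\langle\rho_i,z_l\rangle$ is a face-closed, strongly convex cone all of whose "two-generator subcones" lie in $\Sigma$, and then invoke that $\Sigma$ is a fan and the relevant local picture is simplicial to conclude $\langle\rho_i,z_l\rangle\in\Sigma$.

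The main obstacle is the inductive step: going from "all proper faces are in $\Sigma$" to "the cone itself is in $\Sigma$" is not automatic for a general fan, so one must genuinely use the structure of the extremal contraction — specifically that, after the star subdivision of $\Sigma$ along $\sigma(P)$ that realizes the flip/divisorial contraction/fiber-type contraction determined by $\gamma$, the new maximal cones through $\sigma(P)$ are exactly the $\langle P\setminus\{v_i\},\sigma(P)\rangle$ and, crucially, the link of $\sigma(P)$ in $\Sigma$ is a join: any cone $\tau$ with $G(\tau)$ disjoint from $P\cup G(\sigma(P))$ and with $\langle\sigma(P),\tau\rangle\in\Sigma$ can be "crossed" with each $\langle P\setminus\{v_i\},\sigma(P)\rangle$. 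I would make this precise by passing to the contraction $\varphi$: the image $\varphi(V(\sigma(P)))$ has a star whose cones correspond bijectively (via $\varphi^{-1}$) to cones of $\Sigma$ containing $\langle P\setminus\{v_i\},\sigma(P)\rangle$ for a fixed $i$ on one side and to cones containing $\sigma(P)$ on the other, and the hypothesis on $\tau$ says $\overline{\varphi(\tau)}$-type cone is in the target fan, which pulls back to the desired cone in $\Sigma$. Once this dictionary is in place the result is immediate; assembling the dictionary is the real content, and it is exactly what \cite[Theorem~2.4]{Reid1983} provides, so in the write-up I would cite Reid for the local structure and spend the remaining effort only on checking the disjointness hypotheses are preserved under the induction.
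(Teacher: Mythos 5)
The paper does not prove this proposition at all: it is imported verbatim as \cite[Theorem 2.4]{Reid1983}, so there is no internal argument to compare yours against. Judged on its own, your proposal is not a proof but a narrative that ultimately re-cites the very theorem it is supposed to establish. You say explicitly that the key structural input --- the local model around $\sigma(P)$ and the fact that its link behaves like a join --- ``is exactly what \cite[Theorem~2.4]{Reid1983} provides,'' and that you would ``cite Reid for the local structure.'' Since the proposition \emph{is} Reid's Theorem 2.4 (in the primitive-relation reformulation), this is circular: the ``local structure'' you defer to is the conclusion, with $\tau$ ranging over the cones in the link of $\sigma(P)$.

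The two places where the gap is concrete: (1) the base case $l=0$. The definition of a primitive collection only gives $\langle P\setminus\{v_i\}\rangle\in\Sigma$; the assertion that $\langle P\setminus\{v_i\},\sigma(P)\rangle\in\Sigma$ does not follow from ``$v_1+\cdots+v_r$ lies in the relative interior of $\sigma(P)$'' and genuinely requires the extremality of $\gamma$ --- for non-extremal primitive relations it can fail. You acknowledge this is ``essentially Reid's Theorem 2.4 itself in the minimal case,'' i.e.\ the base case is the theorem. (2) The inductive step. As you note yourself, ``all proper faces (or all two-generator subcones) lie in $\Sigma$'' does not imply the cone lies in $\Sigma$ for a general fan, and the repair you offer --- that the link of $\sigma(P)$ is a join --- is again the statement to be proved. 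A genuine proof would have to run Reid's actual argument: use an ample divisor (or the supporting divisor of the extremal face) to compare the wall relations of adjacent walls whose curve classes lie on the ray $\RR_{\geq 0}\gamma$, and propagate the cone $\langle P\setminus\{v_i\},\sigma(P),\tau\rangle$ across walls by a convexity/positivity argument. None of that is present. Given that the paper itself treats this as a black-box citation, the honest version of your write-up is a one-line citation to Reid, not a proof sketch that disguises the citation as an induction.
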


\subsection{The minimal $\mathbb{P}$-dimension}\label{sec:CSPC}

Let $X$ be a toric manifold with regular complete fan $\Sigma_X$ in $N_\Q$. In this section, we discuss centrally symmetric primitive collections, introduced in \cref{def:CCPC}.

\begin{Prop}(\cite[Proposition 3.2]{Bat91})\label{prop:existence_of_CSR}
If $X$ is projective, then $\Sigma_X$ has a centrally symmetric primitive collection of order $k+1$
\begin{equation} \label{P=CSR}
    r(P) \colon \, x_0+ \dots + x_k=0
\end{equation}
for some $k\in\{1,\dots, \dim(X)\}$.
\end{Prop}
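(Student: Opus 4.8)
The plan is to produce a centrally symmetric primitive collection by a minimality argument on primitive relations. Since $X$ is projective, by Batyrev's theorem the Mori cone $\NE(X_\Sigma)$ is generated by the primitive relations $r(P)$ for $P \in \PC(X_\Sigma)$, and this is a full-dimensional strongly convex cone in $\mathcal{N}_1(X_\Sigma)$ (full-dimensional because $X$ is smooth and projective, so $\mathcal{N}_1 = N_1$ and $\rho(X) \geq 1$; strongly convex because $X$ is projective). In particular $\NE(X_\Sigma) \cap \big({-\NE(X_\Sigma)}\big) = \{0\}$, which says: there is no nontrivial effective relation $\sum_v \nu_v v = 0$ that is simultaneously a nonnegative and a nonpositive combination of primitive relation classes. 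I would like to use this to force some primitive relation to have $\sigma(P) = \{0\}$.

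First I would argue by contradiction: suppose every primitive collection $P$ has $\sigma(P) \neq \{0\}$, i.e.\ $\deg(P) = \ord(P) - \sum_j \mu_j$ with the relation $x_0 + \cdots + x_k = \mu_1 w_1 + \cdots + \mu_s w_s$ having a nonempty right-hand side. Equivalently, every extremal ray $R$ of $\NE(X_\Sigma)$ — whose generator is a primitive relation by \cite[Theorem 2.4]{Reid1983}-type considerations, or directly since extremal rays are spanned by primitive relations — has the property that its associated relation has positive coefficients on the $w_j$'s. The key combinatorial input is the exact sequence $0 \to A_1(X_\Sigma) \to \Z^{G(\Sigma)} \to N \to 0$: an element of $A_1$ is an integral relation among the $v \in G(\Sigma)$, and for a primitive relation the nonnegative entries are exactly the $x_i \in P$ and the strictly negative entries are exactly the (negatives of the) $w_j \in \sigma(P)$-generators. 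So the assumption says: every extremal generator of $\NE(X_\Sigma)$, viewed in $\Z^{G(\Sigma)}$, has at least one strictly negative coordinate.

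Now I would extract the contradiction by a convexity/positivity argument on $\Z^{G(\Sigma)}$. Consider the all-ones functional $\ell = \sum_{v \in G(\Sigma)} e_v^\vee$ on $\Z^{G(\Sigma)}$; restricted to $A_1(X_\Sigma)$ this is (up to sign) the anticanonical degree, $\ell(r(P)) = \deg(P) = -K_X \cdot r(P)$. Since $-K_X$ need not be ample here we cannot immediately say $\ell > 0$ on $\NE\setminus\{0\}$. Instead I would use the full strength of projectivity differently: pick an ample class $A$, so $A \cdot \gamma > 0$ for every nonzero $\gamma \in \NE(X_\Sigma)$; normalize the finitely many extremal primitive relations $r(P_1), \dots, r(P_t)$ so that $A \cdot r(P_i) = 1$, and look at the polytope $\Delta = \Conv\{r(P_1), \dots, r(P_t)\}$, a compact slice of $\NE$. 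The contrapositive goal is to find an extremal $r(P_i)$ with $\sigma(P_i) = \{0\}$. The main obstacle — and the crux of the argument — is to show that not all extremal relations can have a nonempty negative part; I expect this follows because the negative coordinates of the relations must themselves span a cone of $\Sigma$ compatibly (Proposition~\ref{prop:effective classes} in reverse), and summing a suitable positive combination of all extremal relations would produce the zero relation in $\Z^{G(\Sigma)}$ while all coordinates stay $\geq 0$ on one side — contradicting strong convexity of $\NE$. Concretely: since $\NE(X_\Sigma)$ is strongly convex and $0$ lies in its "interior direction" only trivially, and since the coordinate functionals $e_v^\vee$ are each nonnegative on the ray of the primitive relation containing $v$ in its primitive collection, a compactness argument on $\Delta$ yields a vertex $r(P)$ on which $e_v^\vee \geq 0$ for all $v$ — but an effective curve class with all nonnegative coordinates in $\Z^{G(\Sigma)}$ maps to $\sum \nu_v v$ with all $\nu_v \geq 0$, and being in $\Ker(\Z^{G(\Sigma)} \to N) = A_1$ forces $\sum \nu_v v = 0$; the corresponding primitive collection then has $\sigma(P) = \{0\}$. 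Finally, Lemma~\ref{lemma: not too many CSPR} (or the trivial bound $|P| \leq \dim N_\Q + 1$ coming from the relation $x_0 + \cdots + x_k = 0$ with $\{x_0,\dots,x_k\}$ minimally dependent, hence $k \leq \dim(X)$) gives $k \in \{1, \dots, \dim(X)\}$, completing the proof.
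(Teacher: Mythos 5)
The paper does not actually prove this proposition: it is imported from Batyrev (with \cref{rem:CFH} recording an alternative geometric proof due to Chen--Fu--Hwang via minimal dominating families of rational curves), so your argument has to stand entirely on its own. It does not, and the failure is precisely at the step you yourself flag as ``the crux.'' You want a vertex of the polytope $\Delta$ --- equivalently, an extremal class among the primitive relations --- whose coordinates in $\Z^{G(\Sigma)}$ are all nonnegative. But a centrally symmetric primitive relation need not be extremal, and there exist projective toric manifolds in which \emph{every} extremal class has a nonempty negative part. The paper's own \cref{thm:Batyrev result rho=3} (case $l=5$) exhibits exactly this: the extremal relations are $r_0$, $r_1$, $r_3$, each with nontrivial right-hand side, while the unique centrally symmetric relations are $r_2=r_1+r_3$ and possibly $r_4$, which lie in the interior of faces of $\NE(X)$. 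The smallest instance is the del Pezzo surface of degree $7$ ($\PP^2$ blown up at two torus-fixed points), with rays $u_1=(1,0)$, $u_2=(0,1)$, $u_3=(-1,-1)$, $u_4=(1,1)$, $u_5=(-1,0)$: the three extremal primitive relations are $u_1+u_2=u_4$, $u_2+u_3=u_5$, $u_4+u_5=u_2$, each with a coordinate equal to $-1$, and the centrally symmetric relations $u_3+u_4=0$ and $u_1+u_5=0$ are sums of extremal ones. So no compactness or convexity argument on the vertices of $\Delta$ can locate the centrally symmetric collection; the claim ``some vertex has $e_v^\vee\geq 0$ for all $v$'' is false.

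The underlying confusion is between two different positivity notions. Strong convexity of $\NE(X)$ says that no nontrivial nonnegative combination of the generators vanishes \emph{in $\mathcal{N}_1(X)$}; it says nothing about the signs of the coordinates of an individual generator in $\Z^{G(\Sigma)}$, and a strongly convex cone can perfectly well be generated by vectors each of which has a negative coordinate, as the example above shows. A correct combinatorial argument must use projectivity in a finer way --- for instance by a descent on the degree against a fixed ample divisor ranging over \emph{all} primitive collections, not just the extremal ones, showing that a primitive collection of minimal ample degree with $\sigma(P)\neq\{0\}$ produces another primitive collection of strictly smaller degree --- or one argues geometrically as in Chen--Fu--Hwang. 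Your closing observation that $k\in\{1,\dots,\dim X\}$ once the relation $x_0+\cdots+x_k=0$ exists is correct (any $k$ of the $x_i$ generate a cone of the regular fan, hence are linearly independent), but it rests on the existence step, which your proposal does not establish.
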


\begin{Def}\label{def:m(x)}
For a projective toric manifold $X$, we define the 
\emph{minimal $\mathbb{P}$-dimension} as
\begin{equation*}
\minRC(X) \coloneqq \min\Big\{ m\in \ZZ_{>0} \, \Big| \
\begin{gathered}
\text{$\Sigma_X$ has a centrally symmetric }\\ 
\text{ primitive collection of order $m+1$ }
\end{gathered}  \ \Big\}.
\end{equation*} 
\end{Def}

The next remark explains the terminology of \cref{def:m(x)} and highlights the significance of studying centrally symmetric primitive collections.

\begin{Rem}\label{rem:CFH}
In \cite{CFH}, Chen, Fu and Hwang provide a new geometric proof of \cref{prop:existence_of_CSR} by relating centrally symmetric primitive collections to \emph{minimal dominating families of rational curves}. 
We review some aspects of the theory of rational curves on varieties and refer to \cite{kollar96} for details. 
Given a smooth and proper uniruled variety $X$, there is a scheme $\rat(X)$ parametrizing rational curves on $X$. 
A \emph{dominating family of rational curves} on $X$ is an irreducible component of $\rat(X)$ parametrizing rational curves that sweep out a dense open subset of $X$. A dominating family of rational curves $H$ is said to be \emph{minimal} if, for a general point  $x\in X$, the subvariety of $H$ parametrizing curves through $x$ is proper. 
When $X$ is projective, there always exists a minimal dominating family of rational curves on $X$.
For instance, one can take $H$ to be a dominating family of rational curves on $X$ having minimal degree with respect to some fixed ample line bundle on $X$.
The minimal anti-canonical degree $l_X$ of a minimal dominating family of rational curves is a natural invariant of a Fano variety $X$. In \cite{Casagrande_Druel}, Casagrande and Druel investigate $n$-dimensional Fano varieties admitting a minimal dominating family of rational curves of anti-canonical degree $n$, and classify the cases when $l_X=n$. Currently, there are no general results when the  anti-canonical degree is $n-1$.

When $X=X_\Sigma$ is a toric variety, there is a  one-to-one correspondence between minimal dominating families of rational curves $H$ on $X$ and centrally symmetric primitive collections of $\Sigma$ (\cite[Proposition 3.2]{CFH}). 
Moreover, if the centrally symmetric primitive collection has order $k+1$ as in \cref{P=CSR} above, then there is a dense $T$-invariant open subset $U$ of $X$ and a $\PS^k$-bundle $\pi \colon U \to W$ such that the general curve parametrized by $H$ is a line on a general fiber $\pi$ (\cite[Corollary 2.6]{CFH}). 

It follows from this discussion that the minimal $\mathbb{P}$-dimension $\minRC(X)=l_X-1$ is the smallest integer $k$ such that $X$ admits a generic $\PS^k$-bundle structure. 
We have 
\[\minRC(X) \in \ \{1, \dots, n=\dim(X) \},\]
and $\minRC(X)=n$ if and only if $X \simeq \PS^n$.
By \cite[Proposition 3.8]{CFH}, there are three toric Fano manifolds admitting a centrally symmetric primitive relation of order $n=\dim(X)$, namely: $\PP^{n-1}\times \PP^1$, the blowup of $\PP^{n-1}\times \PP^1$ along a linear $\PP^{n-2}$, and the blowup of $\PP^n$ along a linear $\PP^{n-2}$.
The first two varieties also admit a generic $\PP^1$-bundle structure. 
As a consequence, the only $n$-dimensional toric Fano manifold $X$ with $m(X) = n-1$ is the blowup of $\PP^n$ along a linear $\PP^{n-2}$. In Section~\ref{sec:m=n-2}, we shall classify $n$-dimensional toric Fano manifolds $X$ with $m(X) = n-2$.
\end{Rem}

Let $P\in \PC(X)$ be a centrally symmetric primitive collection of order $k+1$. As explained in \cref{rem:CFH}, $P$ induces a $\PS^k$-bundle structure on a dense $T$-invariant open subset $U$ of $X$. In \cite[Corollary 2.6]{CFH}, the $T$-invariant open subset $U$ was taken as small as possible, namely, $U\cong \PS^k\times (\CC^*)^{n-k}$. For our purposes, we want to take $U$ as big as possible. So our next goal is to describe explicitly the biggest $T$-invariant open subset of $X$ on which $P$ induces a $\PS^k$-bundle structure.

\begin{Not} \label{not:no proj bundle}
Let $P=\{x_0,\dots, x_k\}\in \PC(X)$ be a centrally symmetric primitive collection. Denote by $\mathcal{E}_P$ the set of cones $\sigma = \langle v_1, \dots, v_r \rangle \in \Sigma_X$ such that $P \cap G(\sigma) = \varnothing$,
and $\{v_1, \dots, v_r, x_{j_1}, \dots, x_{j_s} \} \in \PC(X)$ for some $s \geq 1$, i.e., 
\[ \cE_P \coloneqq \left\{ \sigma \in \Sigma_X
\mid P \cap G(\sigma) = \varnothing \text{ and }
\exists P' \subsetneq P \text{ such that } P' \cup G(\sigma) \in \PC(X)
\right\}.
\]
We write
\[  V(\mathcal{E}_P) \coloneqq \bigcup_{\sigma \in \mathcal{E}_P} V(\sigma) \subset X. \]
\end{Not}

\begin{Prop} \label{prop:Pk bundle}
Let $P=\{x_0,\dots, x_k\}\in \PC(X)$ be a centrally symmetric primitive collection, and let 
$V(\mathcal{E}_P)$ be as in Notation~\ref{not:no proj bundle}.
Then the open subset $U= X \setminus V(\mathcal{E}_P)$ admits a $\PS^k$-bundle structure over a smooth toric variety.
\end{Prop}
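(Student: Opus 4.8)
The plan is to construct the $\PS^k$-bundle structure on $U = X \setminus V(\mathcal{E}_P)$ explicitly from the fan, by identifying the sub-fan of $\Sigma_X$ that cuts out $U$ and then exhibiting it as the ``total space fan'' of a projective bundle over a quotient fan. First I would observe that $U$ is a $T$-invariant open subset, so it corresponds to a subset $\Sigma_U \subseteq \Sigma_X$ which is a fan (a star-closed subset, i.e. closed under taking faces); concretely $\Sigma_U = \{\tau \in \Sigma_X \mid \tau \notin \overline{\cE_P}\}$, where removing $V(\cE_P)$ amounts to deleting every cone containing some $\sigma \in \cE_P$. The key structural claim I want to extract from the definition of $\cE_P$ is: a cone $\tau \in \Sigma_X$ survives in $\Sigma_U$ if and only if for every subset $P' \subsetneq P$ with $P' \cup G(\tau \setminus P)$... more precisely, $\tau \in \Sigma_U$ iff $\tau$ together with any $k$ of the $x_i$ still spans a cone of $\Sigma_X$ — equivalently $G(\tau) \cup (P \setminus \{x_i\})$ generates a cone for each $i$. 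This is exactly the kind of statement Proposition~\ref{prop:extremal forms cones} is built to produce.

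Next I would set up the base. Let $N' = N / (\Z x_0 + \dots + \Z x_k)$; because $P$ is centrally symmetric, $x_0 + \dots + x_k = 0$, so the sublattice $\Z x_0 + \dots + \Z x_k$ has rank $k$, and since $\{x_0,\dots,\widehat{x_i},\dots,x_k\}$ is part of a basis of $N$ (regularity — any proper subset of a primitive collection spans a cone, hence its generators extend to a basis), the quotient $N'$ is free of rank $n-k$ and the projection $q\colon N \to N'$ has the property that $q$ restricted to any complementary sublattice is an isomorphism. I would then define the candidate base fan $\Sigma'$ in $N'_\Q$ to consist of the images $q(\tau)$ for $\tau \in \Sigma_U$ with $G(\tau) \cap P = \varnothing$; one must check this is a well-defined regular complete fan. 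Then I would build the $\PS^k$-bundle: a rank-$(k+1)$ vector bundle $\cE$ on $X_{\Sigma'}$ whose projectivization has the prescribed fan. The cleanest route is to exhibit the fan of $\PS_{X_{\Sigma'}}(\cE)$ directly — rays are the $k+1$ ``vertical'' rays (images of $x_0,\dots,x_k$) plus lifts of the rays of $\Sigma'$, and maximal cones are, by the split/twisted structure, exactly $\langle P \setminus \{x_i\}, \widetilde{\sigma'}\rangle$ for $\sigma'$ maximal in $\Sigma'$ — and then check this fan equals $\Sigma_U$ via the combinatorial characterization from the previous paragraph, and that the projection morphism of fans $\Sigma_U \to \Sigma'$ induces a morphism $U \to X_{\Sigma'}$ that is Zariski-locally (over each maximal cone of $\Sigma'$) a product with $\PS^k$. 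Smoothness of $X_{\Sigma'}$ follows since $q$ sends bases of $N$ adapted to $P$ to bases of $N'$.

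I expect the main obstacle to be proving that $\Sigma_U$ is precisely the ``projective bundle fan'' over $\Sigma'$ — in particular the two directions: (i) that deleting $V(\cE_P)$ deletes exactly the cones that are \emph{not} of the form ``a cone of the bundle fan'', and (ii) that every maximal cone of $\Sigma_U$ contains all but one of $x_0,\dots,x_k$ and its ``horizontal part'' maps isomorphically to a maximal cone of $\Sigma'$. For (i), the point is that if $\tau$ contains a face $\sigma \in \cE_P$, then by definition some $P' \subsetneq P$ has $P' \cup G(\sigma) \in \PC(X)$, so $P' \cup G(\sigma)$ — and hence $P' \cup G(\tau) \supseteq P'\cup G(\sigma)$ in an appropriate sense — fails to span a cone, which is what rules $\tau$ out of the bundle fan; conversely if $\tau \notin \cE_P$-closure then one uses Proposition~\ref{prop:extremal forms cones} repeatedly to slide $\tau$ into a cone of the expected shape. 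The bookkeeping of ``which $x_i$ appear in a given maximal cone'' and checking compatibility of the twisting data to get an honest vector bundle $\cE$ (rather than just a fibration with fiber $\PS^k$) is where the care is needed; but since $X$ is smooth this twisting is recorded by a single cohomology class coming from how the lifts of the rays of $\Sigma'$ interact with $x_0,\dots,x_k$, and one can read off $\cE$ as a sum of line bundles locally, patched over $X_{\Sigma'}$.
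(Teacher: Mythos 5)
Your overall architecture is the same as the paper's: characterize the subfan $\Sigma_U$ combinatorially, pass to the quotient lattice $N/\Z\langle x_0,\dots,x_k\rangle$, and recognize $\Sigma_U$ as a split fan over the quotient fan with fiber the fan of $\PS^k$. The paper then simply invokes the splitting criterion of Cox--Little--Schenck (Theorem 3.3.19) to obtain a locally trivial $\PS^k$-fibration, rather than assembling an explicit vector bundle $\cE$ from transition data as you propose; your extra step is harmless but unnecessary, since the statement only asks for a $\PS^k$-bundle structure, and smoothness of the base comes for free once local triviality is known (the total space $U$ is smooth). There is, however, one step in your plan that would fail as written: in the combinatorial characterization of $\Sigma_U$ (the paper's \cref{Lem:fanU}), you propose to prove the direction ``$\tau$ not in the closure of $\cE_P$ implies $\tau$ has the bundle-fan shape'' by applying \cref{prop:extremal forms cones} repeatedly to $r(P)$. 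That proposition applies only to \emph{extremal} classes, and a centrally symmetric primitive relation need not be extremal: in Batyrev's Picard-rank-$3$ description (\cref{thm:Batyrev result rho=3}), the centrally symmetric relation $r_2$ equals $r_1+r_3$ and is not extremal. So you cannot ``slide'' cones using that tool here.

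Fortunately no sliding is needed; both directions follow directly from the definitions, which is what the paper does. If $\langle G(\sigma)\cup P\setminus\{x_i\}\rangle\notin\Sigma_X$ for some $i$, then $G(\sigma)\cup P\setminus\{x_i\}$ contains a primitive collection $S$; since every subset of $G(\sigma)$ and every proper subset of $P$ spans a cone, $S$ must meet both $P$ and $G(\sigma)\setminus P$ nontrivially, so $\langle S\setminus P\rangle$ is a face of $\sigma$ lying in $\cE_P$ (with witness $P'=S\cap P$), whence $V(\sigma)\subseteq V(\cE_P)$. Conversely, if $V(\sigma)\subseteq V(\cE_P)$, then some $\tau\in\cE_P$ is a face of $\sigma$, and $G(\tau)\cup P'\in\PC(X)$ forces $\langle G(\sigma)\cup P'\rangle\notin\Sigma_X$. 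One further small correction: the quotient fan need not be complete ($U$ is only an open subset of $X$), so you should drop ``complete'' from the list of properties you intend to verify for $\Sigma'$; what does need checking is strong convexity of the image cones, which the paper deduces from the linear independence of $G(\tau)\cup P\setminus\{x_0\}$.
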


In order to prove \cref{prop:Pk bundle}, we first prove two auxiliary lemmas.

\begin{Lem} \label{Lem:fanU}
Let $P=\{x_0,\dots, x_k\}\in \PC(X)$ be a centrally symmetric primitive collection, let 
$V(\mathcal{E}_P)$ be as in Notation~\ref{not:no proj bundle}, and set $U= X \setminus V(\mathcal{E}_P)$. Then the fan $\Sigma_U$ of $U$ consists of all cones of $\Sigma_X$ of the form
    \begin{equation} \label{equ:conesU}
    \tau'= \langle \tau, x_{j_1}, \dots, x_{j_m} \rangle,
    \end{equation}
    where $0 \leq m \leq k$,  and  $\tau \in \Sigma_X$ is such that 
    $\langle \tau,
    P\setminus\{x_i\} \rangle \in \Sigma$
    for every $i\in\{0,\dots, k\}$. 
    (When $m=0$, \cref{equ:conesU} means that $\tau'=\tau$.)
\end{Lem}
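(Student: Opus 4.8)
\textbf{Proof plan for Lemma~\ref{Lem:fanU}.}

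The plan is to identify $U = X \setminus V(\mathcal{E}_P)$ as an open toric subvariety and then describe its fan as a subfan of $\Sigma_X$. Recall that removing a $T$-orbit closure $V(\sigma)$ from $X$ corresponds to deleting from $\Sigma_X$ all cones that have $\sigma$ as a face; hence removing $V(\mathcal{E}_P) = \bigcup_{\sigma \in \mathcal{E}_P} V(\sigma)$ corresponds to deleting exactly those cones of $\Sigma_X$ admitting some $\sigma \in \mathcal{E}_P$ as a face. So the first step is purely bookkeeping: $\Sigma_U$ consists of all $\tau' \in \Sigma_X$ such that no face of $\tau'$ lies in $\mathcal{E}_P$. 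The task is then to show that this condition on $\tau'$ is equivalent to the description in \eqref{equ:conesU}.

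The second step is to analyze the structure of an arbitrary cone $\tau' \in \Sigma_X$ relative to $P$. Since $G(\tau') \cap P$ is a proper subset of $P$ (any cone meeting all of $P$ would contain the non-cone $P$ — impossible), we may always write $G(\tau') = G(\tau) \sqcup \{x_{j_1}, \dots, x_{j_m}\}$ where $\{x_{j_1}, \dots, x_{j_m}\} = G(\tau') \cap P$ with $0 \le m \le k$, and $\tau$ is the face of $\tau'$ spanned by the generators not in $P$. So every cone of $\Sigma_X$ has the shape appearing in \eqref{equ:conesU} for \emph{some} $\tau$ with $G(\tau) \cap P = \varnothing$; the content of the lemma is the extra condition that $\langle \tau, P \setminus \{x_i\}\rangle \in \Sigma_X$ for all $i$, and that this is exactly what survives the deletion.

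The third step is the equivalence itself, which I would prove in two directions. For "$\tau' \in \Sigma_U \implies$ the condition": suppose $\langle \tau, P \setminus \{x_i\}\rangle \notin \Sigma_X$ for some $i$. Then, since $P$ is a primitive collection, $P \setminus \{x_i\}$ spans a cone, and one can find a minimal subset $P' \subseteq P \setminus \{x_i\}$ together with generators of $\tau$ (or rather a sub-face $\tau_0 \preceq \tau$) such that $G(\tau_0) \cup P' \in \PC(X)$ — this uses that failure of $\langle \tau, P\setminus\{x_i\}\rangle$ to be a cone forces the existence of a primitive collection inside $G(\tau) \cup (P \setminus \{x_i\})$, and after intersecting with $P$ we get a set of the form required in the definition of $\mathcal{E}_P$ (note $P' \subseteq P \setminus \{x_i\} \subsetneq P$, so $P'$ is a proper subset). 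Hence $\tau_0 \in \mathcal{E}_P$ is a face of $\tau'$, contradicting $\tau' \in \Sigma_U$. Conversely, if $\tau'$ satisfies the condition of \eqref{equ:conesU}, I must show no face $\sigma \preceq \tau'$ lies in $\mathcal{E}_P$: given such a $\sigma$ with $P \cap G(\sigma) = \varnothing$ and $P' \cup G(\sigma) \in \PC(X)$ for some $P' \subsetneq P$, pick $x_i \in P \setminus P'$; then $P' \cup G(\sigma) \subseteq (P \setminus \{x_i\}) \cup G(\tau)$, so $\langle \tau, P \setminus \{x_i\}\rangle$ would contain a primitive collection and could not be a cone — contradicting the hypothesis on $\tau'$.

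The main obstacle I expect is the forward direction of step three: carefully extracting, from the mere failure of $\langle \tau, P \setminus \{x_i\}\rangle$ to be a cone, an actual primitive collection whose intersection with $P$ is a \emph{proper} subset of $P$ and whose complementary part is the generator set of a genuine face of $\tau'$ (so that it lands in $\mathcal{E}_P$ as a face of $\tau'$, not merely as an abstract subset). One has to argue that the "obstruction" can be localized to a face, using regularity of $\Sigma_X$ (so that faces are exactly spanned by subsets of generators) and the defining minimality property of primitive collections. Everything else is combinatorial unwinding of the orbit–cone correspondence.
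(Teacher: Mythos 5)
Your proposal is correct and follows essentially the same route as the paper: translate membership in $\Sigma_U$ into the condition that no face of the cone lies in $\mathcal{E}_P$, decompose each cone into its $P$-part and $P$-free part, and prove the equivalence by extracting a primitive collection from any non-cone subset of $G(\tau)\cup(P\setminus\{x_i\})$ (forward direction) and by enlarging $P'\cup G(\sigma)$ to $(P\setminus\{x_i\})\cup G(\tau)$ (converse). The "main obstacle" you flag is handled exactly as you anticipate — a minimal non-cone subset is a primitive collection, its $P$-free part spans a face of $\tau'$ by regularity, and its intersection with $P$ is automatically nonempty and proper — so there is no gap.
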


\begin{proof}
Recall that a cone $\sigma \in \Sigma_X$ corresponds to a $T$-orbit, which is dense and open in $V(\sigma)$. Hence, a cone $\sigma \in \Sigma_X$ is in $\Sigma_U$ if and only if the corresponding orbit does not intersect $V(\cE_P)$, which is equivalent to saying that $V(\sigma) \not\subseteq V(\cE_P)$.
It is immediate that the cones of the form (\ref{equ:conesU}) define a fan $\Sigma' \subset \Sigma_X$ in $N_\Q$, and $X_{\Sigma'}$ is a dense open subset of $X$. We now prove that the toric variety $X_{\Sigma'}$ coincides with $U$ by showing that a cone $\sigma \in \Sigma_X$ is of the form (\ref{equ:conesU}) if and only if $V(\sigma) \not\subseteq V(\mathcal{E}_P)$.

Consider $\sigma \in \Sigma_X \setminus \Sigma'$, which means that
$\langle G(\sigma) \cup P \setminus \{x_i\} \rangle \notin \Sigma_X$ for some $i$. Then the set $G(\sigma) \cup P \setminus \{x_i\}$ contains a primitive collection $S$, so the cone $\tau \coloneqq \langle S \setminus P \rangle$ is in $\cE_P$. But notice that $\tau \prec \sigma$, so $V(\sigma) \subseteq V(\tau) \subseteq V(\cE_P)$ and hence $\sigma$ is not in $\Sigma_U$.
Conversely, if $\sigma \in \Sigma_X \setminus \Sigma_U$, then $V(\sigma) \subseteq V(\cE_P)$, hence there exists $\tau \in \cE_P$ such that $V(\sigma) \subseteq V(\tau)$ and $G(\tau) \cup P' \in \PC(X)$ for some $P' \subset P$. Since $G(\tau) \subseteq G(\sigma)$, we conclude that $\langle G(\sigma) \cup P'\rangle \notin \Sigma_X$, i.e., $\sigma \not\in \Sigma'$.
\end{proof}

Consider the sequences
\begin{center}
\begin{tikzcd}[row sep=tiny]
0 \arrow[r]
& N_P \coloneqq \ker(\phi) \arrow[r]
& N \arrow[r,"\phi"]
& \overline N = N / \Z \langle x_0, \dots, x_k \rangle \arrow[r]
& 0,
\\
0 \arrow[r]
& (N_P)_\Q \arrow[r]
& N_\Q \arrow[r,"\phi_\Q"]
& \overline N _\Q \arrow[r]
& 0,
\\
& \Sigma_0 \arrow[r] 
& \Sigma_U \arrow[r]
& \overline \Sigma_U ,
&
\end{tikzcd}
\end{center}
where $\phi$ is the quotient map, the fan $\Sigma_0$ of $ (N_P)_\Q \simeq \Q^{k+1}$ is the subfan of $\Sigma_U$ of cones of the form (\ref{equ:conesU}) with $\tau=\{0\}$ (in particular note that $X_{\Sigma_0} \simeq \PS^k$), and $\overline\Sigma_U = \{\phi_\Q(\sigma) \,|\, \sigma \in \Sigma_U\}$.

\begin{Lem} Let the notation be as above. Then
$\overline\Sigma_U$ is a toric fan, and the linear map $\phi_\Q$ is compatible with the fans $\Sigma_U$ and $\overline \Sigma_U$.
\end{Lem}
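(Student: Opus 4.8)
The plan is to reduce the statement to the assertion that $\overline{\Sigma}_U$ is a fan, after which compatibility of $\phi_\Q$ with $\Sigma_U$ and $\overline{\Sigma}_U$ is automatic, since by construction every cone of $\Sigma_U$ is mapped \emph{onto} a cone of $\overline{\Sigma}_U$. The basic reduction is that, because $\overline{N}=N/\Z\langle x_0,\dots,x_k\rangle$, the map $\phi_\Q$ kills every $x_i$; hence, by \cref{Lem:fanU}, if $\sigma=\langle\tau,x_{j_1},\dots,x_{j_m}\rangle\in\Sigma_U$, then $\phi_\Q(\sigma)=\phi_\Q(\tau)$, where $\tau=\langle G(\sigma)\setminus P\rangle$ is a cone of $\Sigma_X$ with $\langle\tau,P\setminus\{x_i\}\rangle\in\Sigma_X$ for every $i$ (call such a cone \emph{horizontal}; note that a horizontal cone $\tau$ automatically satisfies $G(\tau)\cap P=\varnothing$, since otherwise $\langle\tau,P\setminus\{x_i\}\rangle$ would contain the primitive collection $P$). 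Thus $\overline{\Sigma}_U=\{\phi_\Q(\tau)\mid\tau\text{ horizontal}\}$, and it remains to check the three fan axioms. Throughout I will use the elementary fact that in a simplicial fan the intersection of two cones $\sigma,\sigma'$ equals $\langle G(\sigma)\cap G(\sigma')\rangle$.

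First I would dispose of the two easy axioms (cones and faces) using smoothness. For horizontal $\tau$, the cone $\langle\tau,P\setminus\{x_0\}\rangle\in\Sigma_X$ is smooth, so $G(\tau)\cup\{x_1,\dots,x_k\}$ is part of a $\Z$-basis of $N$; since $\ker\phi_\Q=(N_P)_\Q=\Span\{x_1,\dots,x_k\}$, this gives $\Span(\tau)\cap\ker\phi_\Q=\{0\}$, so $\phi_\Q$ restricts to a linear isomorphism of $\Span(\tau)$ onto its image. Hence $\tau\to\phi_\Q(\tau)$ is an isomorphism of cones — in particular $\phi_\Q(\tau)$ is a strongly convex rational polyhedral cone — and it matches the face lattice of $\tau$ with that of $\phi_\Q(\tau)$. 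Given a face $F\prec\phi_\Q(\tau)$, its preimage $F'\prec\tau$ lies in $\Sigma_U$ (because $\Sigma_U$ is a fan by \cref{Lem:fanU}), so $F=\phi_\Q(F')\in\overline{\Sigma}_U$.

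The step I expect to be the main obstacle is the intersection axiom: a fibre of $\phi_\Q$ may cut across many cones of $\Sigma_U$, so a priori $\phi_\Q(\tau_1)$ and $\phi_\Q(\tau_2)$ could overlap in a set that is not a face. I would prove that for horizontal $\tau_1,\tau_2$,
\[
\phi_\Q(\tau_1)\cap\phi_\Q(\tau_2)=\phi_\Q(\tau_{12}),\qquad \tau_{12}:=\langle G(\tau_1)\cap G(\tau_2)\rangle .
\]
The cone $\tau_{12}$ is a common face of $\tau_1$ and $\tau_2$, and it is horizontal, since $\langle\tau_{12},P\setminus\{x_i\}\rangle$ is generated by a subset of the generators of the smooth cone $\langle\tau_1,P\setminus\{x_i\}\rangle$ and is therefore a face of it; hence $\tau_{12}\in\Sigma_U$, so $\phi_\Q(\tau_{12})\in\overline{\Sigma}_U$, and by the previous paragraph it is a face of each of $\phi_\Q(\tau_1),\phi_\Q(\tau_2)$, while the inclusion $\supseteq$ is clear. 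For $\subseteq$, given $\bar v$ in the intersection let $v_1\in\tau_1$, $v_2\in\tau_2$ be its unique lifts; then $v_1-v_2\in\ker\phi_\Q=(N_P)_\Q$, and since $X_{\Sigma_0}\cong\PP^k$ is proper the fan $\Sigma_0$ is complete, so $v_1-v_2$ lies in a maximal cone $\langle P\setminus\{x_\ell\}\rangle$ of $\Sigma_0$. Hence $v_1=v_2+(v_1-v_2)\in\langle\tau_2,P\setminus\{x_\ell\}\rangle\in\Sigma_X$ (as $\tau_2$ is horizontal), and therefore $v_1$ lies in the common face $\tau_1\cap\langle\tau_2,P\setminus\{x_\ell\}\rangle$, which by the elementary fact above together with $G(\tau_1)\cap P=\varnothing$ equals $\langle G(\tau_1)\cap G(\tau_2)\rangle=\tau_{12}$; thus $\bar v=\phi_\Q(v_1)\in\phi_\Q(\tau_{12})$. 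Finally, an arbitrary pair $\sigma_1,\sigma_2\in\Sigma_U$ reduces to the horizontal case via $\phi_\Q(\sigma_i)=\phi_\Q(\tau_i)$, which completes the proof that $\overline{\Sigma}_U$ is a fan and hence that $\phi_\Q$ is compatible.
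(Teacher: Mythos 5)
Your proof is correct, and on the axioms the paper actually checks (rational polyhedral cones, faces, strong convexity, compatibility) it runs along the same lines: both arguments derive strong convexity from the linear independence of $G(\tau)\cup (P\setminus\{x_0\})$, which holds because $\langle \tau, P\setminus\{x_0\}\rangle$ is a cone of the regular fan $\Sigma_X$, and both observe that compatibility is then automatic because each cone of $\Sigma_U$ maps onto a cone of $\overline\Sigma_U$. Where you genuinely diverge is that you isolate and prove the intersection axiom, i.e.\ that $\phi_\Q(\tau_1)\cap\phi_\Q(\tau_2)$ is a common face, which the paper's proof does not address at all (its three bullet points stop at strong convexity and compatibility, leaving the intersection condition implicit). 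Your argument for it is sound: the unique lifts $v_1\in\tau_1$, $v_2\in\tau_2$ of a common image point differ by an element of $(N_P)_\Q$, completeness of the $\PP^k$-fan $\Sigma_0$ places $v_1-v_2$ in some $\langle P\setminus\{x_\ell\}\rangle$, horizontality of $\tau_2$ then puts $v_1$ in the cone $\langle \tau_2, P\setminus\{x_\ell\}\rangle\in\Sigma_X$, and simpliciality plus $G(\tau_1)\cap P=\varnothing$ forces $v_1\in\langle G(\tau_1)\cap G(\tau_2)\rangle$. This is exactly the point where a fibre of $\phi_\Q$ could a priori cut across several cones of $\Sigma_U$, so your version buys a complete verification that $\overline\Sigma_U$ is a fan, at the cost of a somewhat longer argument; the paper instead effectively relies on the splitting verification in the proof of \cref{prop:Pk bundle} (via the bijection $\hat\Sigma_U\to\overline\Sigma_U$) to carry that weight. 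One trivial remark: the paper writes $(N_P)_\Q\simeq\Q^{k+1}$, but as your kernel computation $\ker\phi_\Q=\Span\{x_1,\dots,x_k\}$ makes clear, the correct dimension is $k$, consistent with $X_{\Sigma_0}\simeq\PP^k$.
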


\begin{proof}
The cones of $\overline \Sigma_U$ are exactly $\phi_\QQ(\tau)$ for $\tau \in \Sigma_U$ such that $G(\tau) \cap P = \varnothing$, so for simplicity we only consider these $\tau$.

\begin{itemize} 
\item[-] It is immediate that the cones of $\overline \Sigma_U$ are rational polyhedral, and that the faces of $\phi_\Q( \tau)$ are $\phi_\Q( \delta)$, for all subcones $\delta \prec \tau$. 

\item[-] We need to show that the cone $\phi_\Q( \tau)$ is strongly convex, i.e., if $y \in \phi_\Q( \tau)$ and $-y \in \phi_\Q( \tau)$, then $y=0$. This follows automatically from the fact that the images of the generators $\phi_\QQ(G(\tau)) = \{ \overline v_1 , \dots , \overline v_r \}$ are linearly independent, which we prove by contradiction. 
If they are linearly dependent,
then there exist $a_1, \dots, a_r\in \QQ$, not all $0$, such that $\Sigma_{i=1}^r a_i \overline v_i = 0$ in $\overline N_\Q$. This implies that there exist $b_j \in \Q$ for $j=1, \dots, k$ such that $\Sigma_{i=1}^r a_i v_i = \sum_{j=1}^{k} b_j x_j$, which is a contradiction since
$\langle G(\tau) \cup P \setminus \{x_0\} \rangle \in \Sigma$
and hence its primitive generators are linearly independent in $N_\Q$.

\item[-]  $\Sigma_U$ and $\overline \Sigma_U$ are compatible with $\phi_\Q$ as we have $\phi_\Q(\tau') \in \overline \Sigma_U$ for any $\tau' \in \Sigma_U$.
\end{itemize}
\end{proof}

\begin{proof}[Proof of \cref{prop:Pk bundle}]
Let the notation be as above.
Let $\hat \Sigma_U$ be the collection of fans of the form (\ref{equ:conesU}) above with $m=0$. 
It follows from the description of the cones of $\Sigma_U$ in \cref{Lem:fanU} that
\begin{enumerate}
\item $\phi_\Q$ maps each cone $\hat \tau \in \hat \Sigma_U$ bijectively to a cone $\overline \tau \in \overline \Sigma_U$ such that $\phi(\hat \tau \cap N) = \overline \tau \cap \overline N$.  Furthermore, the map $\hat \tau \mapsto \overline \tau$ defines a bijection $\hat \Sigma_U \to \overline \Sigma_U$;
\item given cones $\hat \tau \in \hat \Sigma_U$ and $\tau_0 \in \Sigma_0$, the sum $\hat \tau + \tau_0$ lies in $\Sigma_U$ and every cone of $\Sigma_U$ arises in this way.
\end{enumerate}
In the notation of \cite[Definition 3.3.18]{CoxLittleSchenck2011}, we say that 
\emph{$\Sigma_U$ is split by $\overline \Sigma_U$ and $\Sigma_0$}.
We conclude by \cite[Theorem 3.3.19]{CoxLittleSchenck2011} that $U = X \setminus V(\mathcal{E}_P)$ is a locally trivial fiber bundle over $X_{\overline \Sigma_U}$ with fiber $X_{\Sigma_0} \simeq \PS^k$.  
It follows automatically that $X_{\overline \Sigma_U}$ is smooth, since it is the base of a locally trivial fibration with a smooth total space.
\end{proof}

\subsection{Some properties of primitive collections}

Before focusing on toric Fano manifolds, we collect here two useful properties of primitive collections of arbitrary toric manifolds.
The first one, by Sato, describes the behaviour of primitive collections under a smooth toric blowdown. The second one, by Batyrev, describes primitive collections on toric manifolds of Picard rank $3$. 

\begin{Prop}(\cite[Corollary 4.9]{Sato2000})
\label{prop:PC_blowdown}
Let $X$ be a toric manifold, and let $f \colon X \rightarrow Y$ be the contraction associated to an extremal class in $\mathrm{NE}(X)$, corresponding to a primitive relation of the form 
\[r(Q)\colon t_1 + \dots+ t_s = z.\]
Then the fan $\Sigma_Y$ is obtained from $\Sigma_X$ by removing the ray generated by $z$, and
$X$ is the blowup of $Y$ along $V( t_1,\dots,t_s)$.
Furthermore, the primitive collections of $Y$ are precisely the following $P_Y\in \PC(Y)$:
\begin{itemize}
    \item $P_Y=P_X$ for some $P_X \in \PC(X)$ such that $z \notin P_X$ and $P_X \neq Q=\{t_1, \dots, t_s\}$;
    \item $P_Y=(P_X \setminus \{z\}) \cup \{t_1,\dots,t_r\}$ for some $P_X \in \PC(X)$ such that $z \in P_X$ and $(P_X \setminus \{z\}) \cup S \notin \PC(X)$ for any subset $S \subsetneq \{t_1,\dots,t_r\}$.
\end{itemize}
\end{Prop}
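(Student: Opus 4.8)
The plan is to reduce the statement to a short dictionary relating the cones of $\Sigma_X$ to those of $\Sigma_Y$, and then to read off $\PC(Y)$ by a finite case analysis. First I would pin down the fan of the contraction. Since the extremal primitive relation $r(Q)\colon t_1+\dots+t_s=z$ carries a single ray, with coefficient $1$, on its right-hand side, $f$ is divisorial with exceptional divisor $V(z)$; by the toric description of extremal contractions (which rests on \cref{prop:extremal forms cones} and on the fact that a divisorial contraction of a smooth projective toric variety has a smooth target) the fan $\Sigma_Y$ is obtained from $\Sigma_X$ by deleting the ray $z$, the set $\{t_1,\dots,t_s\}$ becomes the generating set of a smooth cone $\sigma_0\in\Sigma_Y$, and $\Sigma_X$ is the star subdivision of $\Sigma_Y$ at $\sigma_0$ with new ray $z=t_1+\dots+t_s$; in particular $X\cong\Bl_{V(t_1,\dots,t_s)}Y$. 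Concretely this says: $G(\Sigma_Y)=G(\Sigma_X)\setminus\{z\}$; the cones of $\Sigma_X$ not containing $z$ are exactly the cones $\gamma\in\Sigma_Y$ with $Q\not\subseteq G(\gamma)$; and each cone $\gamma\in\Sigma_Y$ with $Q\subseteq G(\gamma)$ is subdivided into the cones $\langle z,\,G(\gamma)\setminus\{t_i\}\rangle$ for $i=1,\dots,s$ together with their faces, these being precisely the cones of $\Sigma_X$ containing $z$.

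From this I extract the two rules I will actually use: for $A\subseteq G(\Sigma_Y)$, (i) if $Q\not\subseteq A$ then $\langle A\rangle\in\Sigma_Y$ if and only if $\langle A\rangle\in\Sigma_X$, and (ii) if $A=Q\sqcup B$ then $\langle A\rangle\in\Sigma_Y$ if and only if $\langle z\rangle+\langle(Q\setminus\{t_i\})\cup B\rangle\in\Sigma_X$ for some (equivalently, by \cref{prop:extremal forms cones} with $\sigma(Q)=\langle z\rangle$ and $\tau=\langle B\rangle$, for every) index $i$. I would also record two elementary facts about $\PC(X)$: if $P\in\PC(X)$ and $P\neq Q$ then $Q\not\subseteq P$ (otherwise $Q$ is a proper subset of $P$, forcing $\langle Q\rangle\in\Sigma_X$ and contradicting $Q\in\PC(X)$); and if $P\in\PC(X)$ and $z\in P$ then $P\cap Q=\varnothing$ (if some $t_i\in P$, then $G(P)\setminus\{t_i\}$ spans a cone of $\Sigma_X$ containing $z$, hence a face of some $\langle z,\,G(\gamma)\setminus\{t_j\}\rangle$ with $Q\subseteq G(\gamma)$; picking $t_k\in Q\setminus G(P)$, the cone $\langle z,\,G(\gamma)\setminus\{t_k\}\rangle$ of $\Sigma_X$ then contains all of $P$, contradicting $P\in\PC(X)$).

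Now the classification. For the forward inclusion, let $P_Y\in\PC(Y)$. If $Q\not\subseteq P_Y$, rule (i) translates the defining properties of a primitive collection of $\Sigma_Y$ verbatim into those of a primitive collection of $\Sigma_X$, which automatically avoids $z$ and differs from $Q$; this is the first type. If $Q\subseteq P_Y$, write $P_Y=Q\sqcup B$; here $B\neq\varnothing$ since $\langle Q\rangle=\sigma_0\in\Sigma_Y$. Using (ii) and \cref{prop:extremal forms cones} one checks that $P_X\coloneqq\{z\}\cup B$ lies in $\PC(X)$, with $P_Y=(P_X\setminus\{z\})\cup Q$; and if $B\cup S\in\PC(X)$ for some $S\subsetneq Q$, then by (i) $B\cup S$ would itself be a primitive collection of $\Sigma_Y$ properly contained in $P_Y$, contradicting minimality --- this is exactly where the stated side condition comes from. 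Conversely, a $P_X\in\PC(X)$ with $z\notin P_X$ and $P_X\neq Q$ lies in $\PC(Y)$ immediately by (i); and given $P_X\in\PC(X)$ with $z\in P_X$ (so $P_X=\{z\}\sqcup B$, by the second elementary fact) satisfying the side condition, one must show $P_Y\coloneqq B\cup Q\in\PC(Y)$. That $\langle P_Y\rangle\notin\Sigma_Y$ follows from (ii) and $\langle z,B\rangle\notin\Sigma_X$. For a proper subset $B'\cup Q'\subsetneq P_Y$ (with $B'\subseteq B$, $Q'\subseteq Q$) there are three cases. If $Q'=Q$ then $B'\subsetneq B$, so $\langle z,B'\rangle\in\Sigma_X$ and \cref{prop:extremal forms cones} (with $\tau=\langle B'\rangle$) together with (ii) give $\langle B'\cup Q\rangle\in\Sigma_Y$. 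If $Q'\subsetneq Q$ and $B'\subsetneq B$, the same application of \cref{prop:extremal forms cones} exhibits $\langle B'\cup Q'\rangle$ as a face of a cone of $\Sigma_X$, so $\langle B'\cup Q'\rangle\in\Sigma_X$ and hence $\in\Sigma_Y$ by (i). The remaining case $B'=B$, $Q'\subsetneq Q$ uses the side condition: if $\langle B\cup Q'\rangle\notin\Sigma_X$, a minimal non-face $R=B''\cup Q''$ of $\Sigma_X$ with $R\subseteq B\cup Q'$ would have $\varnothing\neq Q''\subsetneq Q$ (since $\langle Q''\rangle\in\Sigma_X$ for $Q''\subsetneq Q$ and $\langle B''\rangle\in\Sigma_X$ for $B''\subseteq B$), and then either $B''\subsetneq B$ --- impossible, as the previous case already gave $\langle B''\cup Q''\rangle\in\Sigma_X$ --- or $B''=B$, contradicting the side condition; so $\langle B\cup Q'\rangle\in\Sigma_X\subseteq\Sigma_Y$. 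Thus $P_Y\in\PC(Y)$.

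The point I expect to require the most care is this last step: verifying that the side condition "$(P_X\setminus\{z\})\cup S\notin\PC(X)$ for every $S\subsetneq Q$" genuinely forces $B\cup Q$ to be a \emph{minimal} non-face of $\Sigma_Y$. The difficulty is that a minimal non-face of $\Sigma_X$ lying inside some $B\cup Q'$ need not have the special shape $B\cup S$ appearing in the hypothesis, so the side condition cannot be invoked directly; the remedy is to first clear, using \cref{prop:extremal forms cones} alone, all subsets $B'\cup Q'$ with $B'\subsetneq B$, which removes every candidate minimal non-face except those with $B''=B$, and only then to apply the side condition. Everything else is bookkeeping with the dictionary rules (i), (ii), repeated use of \cref{prop:extremal forms cones}, and the definition of a primitive collection.
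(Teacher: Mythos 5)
The paper does not prove this statement: it is quoted verbatim from Sato (\cite[Corollary 4.9]{Sato2000}) and used as a black box, so there is no in-paper argument to compare against. Your proposal is, as far as I can check, a correct self-contained proof. The two pillars of your argument are exactly the right ones: the identification of $\Sigma_X$ as the star subdivision of $\Sigma_Y$ at $\sigma_0=\langle t_1,\dots,t_s\rangle$, packaged into the dictionary (i)--(ii), and the two preliminary facts that a primitive collection of $X$ other than $Q$ cannot contain $Q$ and that one containing $z$ is disjoint from $Q$. With those in hand, the forward inclusion is routine, and you correctly isolate the one genuinely delicate point — that the side condition only rules out non-faces of the special shape $B\cup S$, so one must first dispose of all candidates $B''\cup Q''$ with $B''\subsetneq B$ via \cref{prop:extremal forms cones} before the side condition applies to the remaining case $B''=B$. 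Your handling of that case (a minimal non-face inside $B\cup Q'$ must have $Q''\neq\varnothing$ because $\langle B\rangle\in\Sigma_X$, and $B''\neq\varnothing$ because $Q''\subsetneq Q$ spans a cone) is exactly what is needed. The only places where you lean on standard facts without full detail are the star-subdivision description of the divisorial contraction and the smoothness of $Y$; these are classical (Reid's toric MMP) and acceptable to cite, which is effectively what the paper itself does for the whole proposition.
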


\begin{Prop}(\cite[Theorem 5.7, Theorem 6.6]{Bat91})
\label{thm:Batyrev result rho=3}
Let $X$ be a projective toric manifold with $\rho(X)=3$. Then the number of primitive collections of $\Sigma_X$ is either $l=3$ or $l=5$. Moreover, the set of generators $G(\Sigma_X)$ can be written as a disjoint union of $l$ nonempty subsets
\[ G(\Sigma_X) = X_0 \sqcup \cdots \sqcup X_{l-1}
\]
that define primitive collections and relations as follows:
\begin{itemize}
    \item Case $l=3$. Each $X_0$, $X_1$, $X_2$ is a primitive collection, and the corresponding primitive relations are extremal.
    \item Case $l=5$. There are five primitive collections of the form $X_i \sqcup X_{i+1}$, $0\leq i\leq 4$, where $X_5\coloneqq X_0$.
    To describe the primitive relations of $X$, we use the following notation. 
    We fix a labelling $(v_1, \dots, v_k)$ for the elements of $X_i$. If $\overline c=(c_1,\dots,c_k)\in \Z^k$, then $\overline  c\cdot X_i$ stands for 
    $c_1v_1+\dots +c_kv_k$. Moreover, we set $\overline 1= (1,\dots, 1)$.
    Then there are vectors $\overline c$ and $\overline b$ of nonnegative integers such that at least one entry in $\overline c$ is zero (up to relabelling, we may assume that $c_1=0$), and the primitive relations of $X$ are the following:
    \begin{align*}
        r_0 &\colon\quad
            \overline 1\cdot X_0 + \overline 1\cdot X_1 = \overline c\cdot X_2 + (\overline b+ \overline 1)\cdot X_3\\
        r_1 &\colon\quad
            \overline 1\cdot X_1 + \overline 1\cdot X_2 = \overline 1\cdot X_4,\\
        r_2 &\colon\quad
           \overline 1\cdot X_2 + \overline 1\cdot X_3 = 0,\\
        r_3 &\colon\quad
            \overline 1\cdot X_3 + \overline 1\cdot X_4 = \overline 1\cdot X_1,\\
        r_4 &\colon\quad
            \overline 1\cdot X_4 + \overline 1\cdot X_0 = \overline c\cdot X_2 + \overline b \cdot X_3.
    \end{align*}
    The relations $r_0$, $r_1$ and $r_3$ are extremal, while $r_2 = r_1 + r_3$ and $r_4 = r_0 + r_3$.
\end{itemize}
\end{Prop}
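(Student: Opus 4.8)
The plan is to read the structure off the geometry of the Mori cone $\NE(X)\subset\mathcal N_1(X)_\R\cong\R^3$ together with the dual configuration of divisor classes, and then to run a finite combinatorial case analysis. Since $\rho(X)=3$, the cone $\NE(X)$ is a full-dimensional strongly convex polyhedral cone, and by Batyrev's description (\cite[Theorem 2.15]{Bat91}) it is generated by the primitive relations $\{r(P)\mid P\in\PC(X)\}$; dually, the exact sequence $0\to A_1(X)\to\Z^{G(\Sigma_X)}\to N\to 0$ identifies each generator $v\in G(\Sigma_X)$ with a class $[D_v]\in\Pic(X)_\R\cong\R^3$, and completeness forces the $[D_v]$ to positively span $\R^3$.

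First I would organize $G(\Sigma_X)$ into \emph{atoms}: declare $u\sim w$ when $u$ and $w$ lie in exactly the same primitive collections, and let $X_0,\dots,X_{l-1}$ be the resulting classes. Using completeness one checks that every generator lies in at least one primitive collection, so no atom is empty and $G(\Sigma_X)=X_0\sqcup\cdots\sqcup X_{l-1}$; by construction every primitive collection is a union of atoms. I would then show that the atoms group the generators by their class $[D_v]$, so that the whole problem is governed by the finite configuration of the $l$ distinct directions $[D_{X_i}]$ in $\R^3$ and the way they surround the chamber cutting out $\Sigma_X$ in the secondary fan.

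The heart of the argument is to classify these configurations. I would first establish that $\NE(X)$ is simplicial, i.e.\ has exactly three extremal rays: at least three because it is full-dimensional in $\R^3$, and at most three by a counting argument using smoothness, the constraint that every proper subset of a primitive collection spans a cone, and Reid's contraction lemma (\cref{prop:extremal forms cones}) applied to each extremal relation. Granting simpliciality, the three extremal primitive relations $r_0,r_1,r_3$ are forced, and one shows that any further primitive collection has a relation lying on a two-dimensional face of $\NE(X)$; tracking which faces can be hit, and using that the complement of each primitive collection must genuinely span a cone of a complete smooth fan, forces either no extra collection ($l=3$) or exactly two ($l=5$), arranged in the cyclic ``pentagon'' pattern $X_i\sqcup X_{i+1}$ with $r_2=r_1+r_3$ and $r_4=r_0+r_3$. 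Ruling out the intermediate pattern $l=4$ is exactly where the combinatorial rigidity concentrates, and I expect this exclusion to be the main obstacle.

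Finally, with the partition and the complete list of primitive collections in hand, the explicit relations are obtained by writing each $\sum_{v\in P}v$ inside its minimal cone $\sigma(P)$ (\cref{prop:effective classes}) and reading off coefficients: smoothness forces every left-hand coefficient to be $1$ (giving the $\overline 1\cdot X_i$ terms) and produces nonnegative right-hand vectors $\overline c,\overline b$, while the normalization $c_1=0$ and the additivity relations $r_2=r_1+r_3$, $r_4=r_0+r_3$ follow from linear algebra in $A_1(X)\cong\Z^3$ once $r_0,r_1,r_3$ are pinned down as the extremal generators.
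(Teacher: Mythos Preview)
The paper does not prove this proposition at all: it is stated with a citation to Batyrev \cite[Theorem 5.7, Theorem 6.6]{Bat91} and no argument is given. So there is no ``paper's own proof'' to compare your attempt against; your sketch is effectively an attempt to reconstruct Batyrev's original argument, not anything appearing in this paper.

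That said, a couple of points in your sketch deserve caution. Your claim that the atoms (equivalence classes under ``lie in the same primitive collections'') coincide with the fibers of $v\mapsto [D_v]\in\Pic(X)$ is not automatic: membership in the same primitive collections controls the $+1$ contributions to $D_v\cdot r(P)$, but if $v$ appears on the right-hand side of some $r(P)$ (i.e.\ $v\in G(\sigma(P))$) the coefficient there can differ between atom-mates, so equality of intersection numbers with all $r(P)$ is not immediate from the atom definition alone. Likewise, your simpliciality argument for $\NE(X)$ (``at most three extremal rays by a counting argument using smoothness and \cref{prop:extremal forms cones}'') is where the real content lies, and you have only gestured at it; in Batyrev's treatment this and the exclusion of $l=4$ are handled by a detailed analysis of how the primitive collections can overlap in a splitting fan versus a non-splitting one, which is more combinatorial than the Mori-cone picture you propose. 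Your outline is a reasonable heuristic for why the answer should be $3$ or $5$, but the two steps you flag as ``main obstacles'' are precisely where a genuine proof is needed.
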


\begin{Rem}\label{rmk_rho<=3}
    In \cite[Corollary 1.2]{SatoSuyama2020}, Sato and Suyama use Proposition~\ref{thm:Batyrev result rho=3} to show that projective spaces are the only toric $2$-Fano manifolds with Picard rank $\rho\leq 3$.
\end{Rem}

\subsection{Primitive collections on toric Fano manifolds}

Let $X$ be a projective toric manifold with regular complete fan $\Sigma_X$  in $N_\Q$. 

\begin{Prop}(\cite[Proposition 2.3.6]{Bat99})
The toric variety $X$ is Fano  if and only if all primitive collections of $\Sigma_X$ have strictly positive degree.
\end{Prop}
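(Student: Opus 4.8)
The plan is to reduce both directions to Kleiman's criterion applied to $-K_X$, using Batyrev's description of the Mori cone. First I would record the two dictionary entries that do all the work: by \cite[Theorem~8.1.1]{CoxLittleSchenck2011} one has $c_1(X)=\sum_{v\in G(\Sigma_X)}V(v)=-K_X$, so ``$X$ Fano'' means precisely ``$-K_X$ ample''; and, as noted just above in the excerpt, $\deg(P)=-K_X\cdot r(P)$ for every primitive collection $P$. I would also observe once and for all that each class $r(P)$ is nonzero in $\mathcal{N}_1(X_\Sigma)$: its defining relation is a nontrivial integral relation among the generators in $G(\Sigma_X)$ — the support $P$ is nonempty and, by \cite[Proposition~3.1]{Bat91}, disjoint from $G(\sigma(P))$ — while the map $A_1(X_\Sigma)\hookrightarrow \Z^{G(\Sigma_X)}$ is injective.

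For the forward direction, I would argue: if $-K_X$ is ample, then since each $r(P)$ is a nonzero effective curve class, Kleiman's criterion gives $\deg(P)=-K_X\cdot r(P)>0$ for every $P\in\PC(X_\Sigma)$.

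For the converse, I would use that $X_\Sigma$ is projective, so by \cite[Theorem~2.15]{Bat91} the Mori cone is the rational polyhedral cone $\NE(X_\Sigma)=\sum_{P\in\PC(X_\Sigma)}\Q_{\geq 0}\,r(P)$; in particular it is closed. Assuming $\deg(P)>0$ for all $P$, any nonzero $\gamma=\sum_P\lambda_P\,r(P)\in\NE(X_\Sigma)$, with $\lambda_P\geq 0$ not all zero, satisfies $-K_X\cdot\gamma=\sum_P\lambda_P\deg(P)>0$; hence $-K_X$ is positive on $\overline{\NE}(X_\Sigma)\setminus\{0\}$ and therefore ample by Kleiman's criterion, i.e.\ $X$ is Fano.

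The only genuinely load-bearing point — and the one I would be most careful about — is this last step: that positivity of $-K_X$ on the finitely many generators $r(P)$ already forces ampleness. This is exactly where projectivity enters, both to guarantee that $\NE(X_\Sigma)$ is finitely generated and closed and to make Kleiman's criterion available; without it one would only be able to conclude nefness-type statements. Everything else is bookkeeping with the two identities recorded above.
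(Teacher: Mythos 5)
The paper does not prove this statement; it is quoted from \cite[Proposition 2.3.6]{Bat99} under the standing hypothesis (stated at the start of that subsection) that $X$ is a \emph{projective} toric manifold. Your argument is correct and is essentially the standard proof: the identification $\deg(P)=-K_X\cdot r(P)$, the nonvanishing of each class $r(P)$ via the injection $A_1(X_\Sigma)\hookrightarrow \Z^{G(\Sigma_X)}$, and Kleiman's criterion combined with Batyrev's finite generation of $\NE(X_\Sigma)$ by primitive relations. You correctly flag the load-bearing step — that positivity on the finitely many generators $r(P)$ of the closed polyhedral cone $\NE(X_\Sigma)$ suffices for ampleness — and the projectivity needed there is supplied by the standing hypothesis, so there is no gap.
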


\begin{Prop}(\cite[Corollary 4.4]{Cas03b}) \label{prop:deg1 extremal}
Assume that $X$ is Fano, and let $P \in \PC(X)$. If $\deg(P)=1$, then the corresponding curve class is extremal.
\end{Prop}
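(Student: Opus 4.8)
The plan is to deduce this from Batyrev's criterion that $X$ is Fano if and only if every primitive collection has strictly positive degree, combined with the earlier fact (\cref{prop:deg1 extremal} is itself the target, so I mean the sequence of results preceding it) that identifies primitive relations with effective curve classes. Concretely, let $P\in\PC(X)$ with $\deg(P)=1$, and write its primitive relation as
\[
 r(P)\colon\ v_1+\dots+v_r=\mu_1 w_1+\dots+\mu_s w_s,
\]
so that $\deg(P)=r-\sum_j\mu_j=1$ and $-K_X\cdot r(P)=1$. Suppose for contradiction that the class $\gamma=[r(P)]$ is not extremal in $\NE(X_\Sigma)$. Since $\NE(X_\Sigma)=\sum_{Q\in\PC(X)}\Q_{\geq 0}\,r(Q)$ is a (rational, polyhedral) cone generated by the finitely many primitive relations, $\gamma$ lies in the relative interior of some face spanned by at least two extremal rays; hence we may write $\gamma=\sum_i \lambda_i \eta_i$ with $\lambda_i>0$ and the $\eta_i$ distinct extremal classes, at least two of them appearing. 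Pairing with $-K_X$, which is ample because $X$ is Fano, gives $1=-K_X\cdot\gamma=\sum_i \lambda_i(-K_X\cdot\eta_i)$, and each $-K_X\cdot\eta_i$ is a positive integer (it is $\deg$ of the corresponding primitive collection, hence $\geq 1$).

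The main obstacle is that the $\lambda_i$ need not be integers, so the naive "sum of positive integers equals $1$" contradiction does not immediately close. To get around this I would work with a single extremal ray: pick one extremal class $\eta$ with $\gamma$ in its span direction is not available, so instead choose an extremal ray $R$ of $\NE(X)$ in the minimal face $F$ containing $\gamma$, let $\varphi_R\colon X\to Y$ be the associated extremal contraction, and use that a curve class $\beta$ is contracted by $\varphi_R$ iff $\beta\in R$. Now $\gamma\notin R$ by assumption, so $\varphi_R$ does not contract $r(P)$; since $-K_X$ is $\varphi_R$-ample this is consistent, so I need a sharper tool. The clean route is: among all extremal rays, the contraction $\varphi_R$ with $R\subset F$ has the property that for the supporting nef divisor $H$ of $F$ we have $H\cdot\gamma=0$ while $H\cdot r(Q)>0$ for every $Q$ whose relation is not in $F$. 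Writing $\gamma$ as a nonnegative combination of the primitive relations $r(Q)$ lying in the face $F$ and using $-K_X\cdot\gamma=1$, I reduce to the sub-cone of $\NE$ spanned by those $r(Q)$ with $F$ of dimension $\geq 2$.

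At this point I would invoke the description of the minimal face more carefully via Reid's \cref{prop:extremal forms cones} together with \cref{prop:effective classes}: if $\gamma=r(P)$ with $r-\sum\mu_j=1$ and $\gamma$ were non-extremal, one produces another effective class $\gamma'$ with $0<-K_X\cdot\gamma'<-K_X\cdot\gamma=1$, which is impossible since $-K_X\cdot\gamma'$ is a positive integer. The production of $\gamma'$ should come from splitting the relation $r(P)$: because $\sum_j\mu_j=r-1\geq 1$, at least one $\mu_j\geq 1$, and one can try to ``peel off'' a sub-relation supported on a genuine cone of $\Sigma$ (using that $\langle w_1,\dots,w_s\rangle\in\Sigma$) of strictly smaller anticanonical degree; combined with effectivity from \cref{prop:effective classes} this yields the desired $\gamma'$ and the contradiction. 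I expect the delicate point to be verifying that such a proper sub-relation is again effective, i.e.\ that the relevant generators span a cone of $\Sigma_X$ — this is exactly where one uses that $\deg(P)=1$ forces $r=\sum\mu_j+1$ so that removing a single generator keeps the remaining collection a cone, and where Reid's lemma on cones does the bookkeeping.
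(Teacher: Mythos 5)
The paper does not actually prove this proposition: it is imported verbatim from Casagrande \cite{Cas03b}, Corollary 4.4, so there is no internal argument to compare yours against. Judged on its own terms, your proposal correctly identifies the real obstruction — writing $r(P)=\sum_i\lambda_i\eta_i$ and pairing with $-K_X$ fails because the $\lambda_i$ need not be integers — but the step you propose to overcome it is exactly the missing content, and it is not carried out. You want to produce an integral effective class $\gamma'$ with $0<-K_X\cdot\gamma'<-K_X\cdot r(P)=1$ by ``peeling off a sub-relation'' of $r(P)$, but two things block this as written. First, \cref{prop:effective classes} only certifies effectivity of a relation $\sum\alpha_ix_i-\sum\beta_jy_j=0$ when the $y_j$ on the \emph{negative} side span a cone of $\Sigma$; in $r(P)$ the positive side $\{v_1,\dots,v_r\}$ is a primitive collection, hence by definition does \emph{not} span a cone, so you cannot freely remove generators from either side and retain an effective class. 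Second, you invoke Reid's \cref{prop:extremal forms cones} to do the bookkeeping, but that proposition has as a hypothesis that the class is already extremal — precisely what you are assuming fails for $r(P)$ — so it is unavailable in your contradiction argument.

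What actually closes the gap in Casagrande's proof is a genuine decomposition theorem: on a projective toric manifold, a primitive class that is not extremal can be written as a sum of at least two nonzero \emph{integral} effective classes (obtained by Batyrev's procedure of ``breaking'' a primitive relation along a primitive collection contained in $(P\setminus\{v_i\})\cup G(\sigma(P))$ and iterating). Once one has an integral decomposition $r(P)=\gamma_1+\gamma_2+\cdots$ with each $-K_X\cdot\gamma_i$ a positive integer by the Fano hypothesis, the inequality $\deg(P)\geq 2$ is immediate. Your sketch gestures at this but neither states nor proves the integrality of the decomposition, which is the entire difficulty; as it stands the proposal is an outline with the essential lemma missing.
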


\begin{Prop} \label{prop:PC order 2}
Assume that $X$ is Fano, and let $x \in G(\Sigma_X)$.
\begin{enumerate}
    \item There is at most one primitive collection of order $2$ and degree $2$ containing $x$. If it exists, then it is of the form $x+ (-x) =0$, and $\minRC(X)=1$.
    \item (\cite[Lemma 3.3]{Cas03}) There are at most two primitive collections of order $2$ and degree $1$ containing $x$. If there are exactly two of them, then they are of the form $x+y=(-w)$ and $x+w=(-y)$, and $\minRC(X)=1$.
\end{enumerate}
\end{Prop}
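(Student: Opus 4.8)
The plan is to treat the two statements separately, in both cases using the Fano hypothesis (every primitive collection has strictly positive degree) together with the structure of primitive relations. For part (1), suppose $P=\{x,y\}$ is a primitive collection of order $2$ and degree $2$. Since $\deg(P) = 2 = \ord(P) - \sum \mu_j$, we must have $\sum \mu_j = 0$, so $\sigma(P) = \{0\}$ and the primitive relation reads $x + y = 0$, i.e., $y = -x$. This already shows $P$ is centrally symmetric of order $2$, hence $\minRC(X) = 1$. For uniqueness, if $\{x, y'\}$ were another such collection then the same argument gives $y' = -x = y$, so the collection is unique. The only subtlety to record is why $\{x,-x\}$ is genuinely a primitive collection when $-x \in G(\Sigma_X)$: neither $\{x\}$ nor $\{-x\}$ can fail to span a cone (singletons always do, being rays), while $\langle x, -x\rangle$ is not strongly convex, so it is not a cone of $\Sigma_X$; thus $\{x,-x\}$ is primitive as soon as $-x$ is a ray generator.

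For part (2), suppose $\{x, y\}$ and $\{x, w\}$ are distinct primitive collections of order $2$ and degree $1$ containing $x$. Degree $1$ means $\ord - \sum\mu_j = 1$ with $\ord = 2$, so $\sum \mu_j = 1$, and (using that $\sigma(P)$ is the minimal cone containing the sum, and the generators of a cone in a regular fan are part of a basis) each primitive relation has the shape $x + y = z$ and $x + w = z'$ for single ray generators $z, z'$ (this is \cite[Lemma 3.3]{Cas03}, which I may invoke). Subtracting the two relations gives $y - w = z - z'$, i.e., $y + z' = w + z$; one then argues, as in the proof of \cref{lemma: CSPR do not intersect} via \cref{prop:effective classes} and the disjointness $P \cap \sigma(P) = \varnothing$ from \cite[Proposition 3.1]{Bat91}, that this forces $z' = w$ and $z = y$ — because otherwise one produces a relation realizing an interior point of an intersection of two cones with disjoint generating sets, a contradiction, unless the cancellations collapse exactly as claimed. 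Then the two relations become $x + y = w$ and $x + w = y$; adding them yields $2x + y + w = y + w$, hence $2x = 0$, i.e. $x = 0$, which is absurd — so instead one gets $x + x = 0$? Let me recompute: adding $x+y=w$ and $x+w=y$ gives $2x + (y+w) = (y+w)$, so $2x=0$. That is impossible for a nonzero primitive vector, so the correct reading is that the relations, rewritten, are $x+y = (-w)$ and $x+w = (-y)$ — i.e. the single ray on the right is $-w$ (resp. $-y$) — and then adding gives $x + y + x + w = -w - y$, so $2x + 2y + 2w = 0$, i.e. $x + y + w$ lies in a subgroup with $2(x+y+w) = 0$, forcing $x+y+w=0$? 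No: $2(x+y+w)=0$ in a free abelian group forces $x+y+w=0$, and then $\{x,y,w\}$ would be a centrally symmetric relation of order $3$; but more to the point, from $x + y = -w$ we read that $\{x, y, w\}$ satisfies $x+y+w = 0$, and the real output is that $w = -x-y$ is a ray generator, so $x+y = -w$ with $-w \in G(\Sigma_X)$; symmetrically $y = -x - w$. Either way, the existence of a ray generator equal to $-(x+y)$ produces a centrally symmetric primitive relation — one checks that $\{x, y, w\}$ with $x + y + w = 0$ contains, or refines to, a centrally symmetric primitive collection, and in the relevant sub-case this has order $2$ (when one of the pairwise sums is itself zero) giving $\minRC(X) = 1$ directly; in general $\minRC(X) \le 2$, but combined with the shape $x+y=(-w)$, $x+w=(-y)$ forced above one extracts $\minRC(X)=1$.

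The main obstacle I anticipate is pinning down, in part (2), the precise bookkeeping that takes ``$\{x,y\}$ and $\{x,w\}$ are degree-$1$ order-$2$ primitive collections'' to the normal form ``$x+y = (-w)$ and $x+w = (-y)$'' — that is, showing the right-hand ray generators are forced to be $-w$ and $-y$ respectively, and that no other configuration survives. This is exactly the content of \cite[Lemma 3.3]{Cas03}, and the cleanest route is to cite it directly rather than reprove it; the remaining work (subtracting relations, invoking \cref{prop:effective classes} and \cref{lemma: CSPR do not intersect}-style cone-intersection arguments to rule out alternatives, and reading off $\minRC(X)=1$ from the existence of the ray $-x$ once a length-$2$ centrally symmetric relation appears) is then routine. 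For part (1) there is essentially no obstacle; it is a one-line degree count plus the uniqueness argument.
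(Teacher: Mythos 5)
The paper gives no proof of this proposition: part (1) is stated as an easy observation and part (2) is attributed entirely to \cite[Lemma 3.3]{Cas03}. Your treatment of part (1) is correct and complete --- the degree count forces $\sigma(P)=\{0\}$ and $y=-x$, uniqueness is immediate, and your observation that $\{x,-x\}$ is automatically a primitive collection (singletons are rays, $\langle x,-x\rangle$ is not strongly convex) is exactly the point needed to conclude $\minRC(X)=1$. For part (2) your final position --- cite \cite[Lemma 3.3]{Cas03} for the normal form --- coincides with the paper's, so the proof as a whole stands.

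That said, the exploratory reasoning you record for part (2) contains a genuine error worth flagging, since it would mislead anyone trying to reprove the lemma. From $y+z'=w+z$ you cannot conclude $z'=w$ and $z=y$; the correct deduction via \cref{prop:effective classes} is the opposite: \emph{if} $\langle w,z\rangle$ were a cone, then $y+z'-w-z=0$ would be a nonzero effective class of anticanonical degree $0$ (nonzero because $y\neq w$, the two collections being distinct; and if it were zero one would get $z=y$ and hence $x=0$), contradicting Fano. So the subtraction step shows that $\{w,z\}$ and $\{y,z'\}$ fail to span cones, i.e.\ are themselves primitive collections, and it is the analysis of \emph{their} primitive relations that eventually yields $z=-w$ and $z'=-y$; this is the nontrivial content of Casagrande's lemma and is precisely why the citation is the right move. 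Your subsequent detour through $2(x+y+w)=0$ and a possible order-$3$ centrally symmetric relation is unnecessary: once the normal form $x+y=-w$ is granted, both $w$ and $-w$ lie in $G(\Sigma_X)$ (the right-hand side of a primitive relation consists of ray generators), so $\{w,-w\}$ is a centrally symmetric primitive collection of order $2$ by the same argument you used in part (1), giving $\minRC(X)=1$ directly. Note also that the ``at most two'' claim then follows for free: a third collection $\{x,y_3\}$ would force $x+y=-y_3$ as well as $x+y=-w$, whence $y_3=w$.
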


\begin{Cor}
\label{lemma: one true enemy}
Assume that $X$ is Fano and $\minRC(X)>1$. Then any $x \in G(\Sigma_X)$ is contained in at most one primitive collection of order 2. If there is such a primitive collection, then it is of the form $x+y=z$.
\end{Cor}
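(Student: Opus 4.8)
The plan is to derive the statement directly from \cref{prop:PC order 2}, the Fano hypothesis, and the definition of $\minRC$. First I would pin down the possible shapes of a primitive collection $P = \{x, y\}$ of order $2$ on a Fano toric manifold. Writing $r(P)\colon x + y = \mu_1 w_1 + \cdots + \mu_s w_s$ with $\langle w_1, \dots, w_s\rangle = \sigma(P)$ the minimal cone containing $x+y$, minimality forces every $\mu_j$ to be strictly positive, while the Fano condition forces $\deg(P) = 2 - \sum_j \mu_j > 0$; hence $\sum_j \mu_j \in \{0, 1\}$. If $\sum_j \mu_j = 0$ then $P$ is centrally symmetric of order $2$, so $\minRC(X) = 1$, contrary to hypothesis; therefore $\sum_j \mu_j = 1$, which forces $s = 1$ and $\mu_1 = 1$, i.e. $r(P)\colon x + y = z$ for a single ray generator $z$. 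This already establishes the claimed shape of any order-$2$ primitive collection of $X$.

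For the uniqueness statement, fix $x \in G(\Sigma_X)$ and apply \cref{prop:PC order 2} to it. Part (1) gives at most one order-$2$ degree-$2$ primitive collection containing $x$, and its existence would imply $\minRC(X) = 1$; since $\minRC(X) > 1$, there are none. Part (2) gives at most two order-$2$ degree-$1$ primitive collections containing $x$, and having exactly two of them would again imply $\minRC(X) = 1$; so there is at most one. Combining, $x$ is contained in at most one primitive collection of order $2$, and if such a collection exists it has degree $1$, hence by the first paragraph it is of the form $x + y = z$.

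I do not expect a genuine obstacle here: the corollary is essentially a bookkeeping consequence of \cref{prop:PC order 2}. The only point requiring a little care is the elementary observation in the first paragraph that, on a Fano toric manifold, an order-$2$ primitive relation of positive degree is either $x + y = 0$ or $x + y = z$ with $z \in G(\Sigma_X)$; this rests on $\sigma(P)$ being the \emph{minimal} cone containing $x + y$, so that all coefficients $\mu_j$ in $r(P)$ are strictly positive and the constraint $\sum_j \mu_j \leq 1$ leaves exactly these two possibilities.
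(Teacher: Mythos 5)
Your proof is correct and is precisely the argument the paper intends: the corollary is stated without proof as an immediate consequence of \cref{prop:PC order 2}, and your two steps (the degree of an order-$2$ primitive collection on a Fano is $1$ or $2$, with degree $2$ excluded by $\minRC(X)>1$, and at most one degree-$1$ collection through $x$ by part (2) of the proposition) fill in exactly the intended bookkeeping. No issues.
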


\begin{Def}
\label{definition: enemy}
Let $x\in G(\Sigma_X)$. We say that $y \in G(\Sigma_X)$ is an \emph{opponent} of $x$ if $\langle x,y \rangle \notin \Sigma_X$. 
\end{Def}

\begin{Not}
Assume that $X$ is Fano and $\minRC(X)>1$. By \cref{lemma: one true enemy}, each vector $x\in G(\Sigma_X)$ has at most one opponent. If such an opponent exists, we denote it by $x'$.
\end{Not}

\begin{Lem}
\label{lemma: sum of enemies gives more enemies}
Assume that $X$ is Fano and $\minRC(X)>1$. Consider a pair of opponents $x,x' \in G(\Sigma_X)$. If there exist $y,z \in G(\Sigma_X)$ such that $x+x' = y+z$, then $z=y'$.
\end{Lem}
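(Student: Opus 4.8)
The plan is to determine the shape of the primitive relation attached to $\{x,x'\}$, reduce the assertion $z=y'$ to the claim that $\langle y,z\rangle$ is not a cone of $\Sigma_X$, and then derive that claim from the fan axioms. First, since $x,x'$ are opponents we have $\langle x,x'\rangle\notin\Sigma_X$ whereas $\langle x\rangle,\langle x'\rangle\in\Sigma_X$, so $\{x,x'\}$ is a primitive collection of order $2$. As $X$ is Fano its degree is at least $1$, and as $\minRC(X)>1$ it is not centrally symmetric; hence, as in \cref{lemma: one true enemy}, its primitive relation has the form $x+x'=w$ for a single $w\in G(\Sigma_X)$. Combined with the hypothesis, this yields
\[
    y+z=w,\qquad w\in G(\Sigma_X),
\]
and moreover $y\neq z$, since $y=z$ would force the non-primitive equality $w=2y$.

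Next I would observe that it suffices to prove $\langle y,z\rangle\notin\Sigma_X$. Indeed, in that case, since $y\neq z$ and the singletons $\langle y\rangle,\langle z\rangle$ are cones, $\{y,z\}$ is a primitive collection of order $2$; its minimal cone is the ray $\langle w\rangle$ --- the unique cone of $\Sigma_X$ containing the nonzero vector $w=y+z$ in its relative interior --- so its primitive relation is $y+z=w$. Thus $z$ is an opponent of $y$, and since a vector has at most one opponent (\cref{lemma: one true enemy}), $z=y'$.

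It then remains to exclude $\langle y,z\rangle\in\Sigma_X$, and this is the step I expect to carry the weight. Assuming $\langle y,z\rangle\in\Sigma_X$, regularity of $\Sigma_X$ and $y\neq z$ give that $\{y,z\}$ is part of a $\ZZ$-basis of $N$, so $\langle y,z\rangle$ is $2$-dimensional and $w=y+z$ lies in its relative interior. On the other hand $\langle w\rangle$ is a ray of $\Sigma_X$, so $\langle w\rangle\cap\langle y,z\rangle$ is a common face of both cones; it contains $w\neq 0$, and the only face of the ray $\langle w\rangle$ containing $w$ is $\langle w\rangle$ itself, so $\langle w\rangle$ must be a face of $\langle y,z\rangle$. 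But a face of $\langle y,z\rangle$ meeting its relative interior equals $\langle y,z\rangle$, whence $\langle w\rangle=\langle y,z\rangle$, which is impossible by dimension. The only genuinely non-formal ingredient here is this fan-theoretic observation --- that a lattice point interior to a $2$-dimensional cone of $\Sigma_X$ cannot simultaneously generate a ray of $\Sigma_X$ --- together with the trivial exclusion of $y=z$; everything else is bookkeeping with \cref{lemma: one true enemy}.
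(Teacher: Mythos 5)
Your proof is correct, but the decisive step is carried out differently from the paper's. Both arguments reduce the claim to showing that $\langle y,z\rangle$ is not a cone of $\Sigma_X$, after which $z=y'$ follows from the uniqueness of opponents (\cref{lemma: one true enemy}). The paper excludes $\langle y,z\rangle\in\Sigma_X$ in one stroke: if it were a cone, the relation $x+x'=y+z$ would yield a nontrivial effective curve class of anticanonical degree $0$ (cf. \cref{prop:effective classes}), contradicting the Fano hypothesis; the degenerate cases $y\in\{x,x'\}$ are disposed of separately at the outset. You instead first use the Fano condition together with $\minRC(X)>1$ to pin the primitive relation of $\{x,x'\}$ down to $x+x'=w$ with $w\in G(\Sigma_X)$, deduce $y+z=w$, and then obtain the contradiction purely from the fan axioms: a ray generator cannot lie in the relative interior of a two-dimensional cone of the fan. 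Your route is slightly longer, but it isolates where positivity is genuinely needed (only to normalize the right-hand side to a single ray generator) and treats the cases $y\in\{x,x'\}$ uniformly; the paper's is shorter because the degree-$0$ argument kills the bad configuration directly. Both arguments are complete.
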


\begin{proof}
If $y=x$ or $y=x'$, the claim follows automatically. So we assume otherwise. Note that $\{ x, x'\} \in \PC(X)$, and $y$ and $z$ do not form a cone, as otherwise $x+x' = y+z$ would give us a primitive relation of degree $0$, which is impossible for a toric Fano manifold.
\end{proof}

\begin{Lem}\label{lemma: when is eff deg 1 PC}
Assume that $X$ is Fano and $\minRC(X)>1$. Assume that there exist $x,y,z,u,v$ in $G(\Sigma_X)$ such that $(*)$ $x+y+z=u+v$ and such that $\langle u,v\rangle\in\Sigma_X$. Then exactly one of the following must happen:
\begin{itemize}
\item[a. ] The vectors $x,y,z$ are pairwise distinct, and $\{x,y,z\}$ is a primitive collection with primitive relation $(*)$. In particular, the corresponding curve class is extremal.
\item[b. ] Up to relabeling, $v=z$, $y=x'$ and $x+x'=u$. 
\end{itemize}
\end{Lem}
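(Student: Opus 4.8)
The plan is to work with the primitive relation structure of $(*)$ directly. First I would observe that by \cref{prop:deg1 extremal}, any degree-$1$ primitive collection gives an extremal curve class, so once I establish that a subset of $\{x,y,z\}$ is a primitive collection with primitive relation $(*)$ (which has degree $3-2=1$), extremality follows for free; this dispenses with the "In particular" clause in case a. Next I would split on whether $x,y,z$ are pairwise distinct. If they are \emph{not} pairwise distinct, say $y=z$, then $(*)$ reads $x+2y=u+v$; but $X$ is Fano so every primitive relation has positive degree, and one checks this forces the relevant configuration to degenerate --- more carefully, I would argue that $\langle x,y\rangle$ and $\langle u,v\rangle$ cannot both be cones together with $x+2y=u+v$ unless we land in a low-$\minRC$ situation contradicting $\minRC(X)>1$; the cleaner route is to note a repetition among $x,y,z$ must be handled by the $\{x,x'\}$ analysis, pushing us toward case b.

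For the main dichotomy, assume $x,y,z$ pairwise distinct and consider the set $S=\{x,y,z\}$. Either $S$ does not generate a cone of $\Sigma_X$, or it does. If $S$ generates a cone, then $\langle x,y,z\rangle\in\Sigma_X$, and I would combine this with $\langle u,v\rangle\in\Sigma_X$ and the relation $(*)$: the relation $x+y+z=u+v$ with both $\langle x,y,z\rangle$ and $\langle u,v\rangle$ cones would be a primitive relation of degree $0$ (or exhibit two interior points of cones coinciding), which is impossible on a toric Fano manifold --- unless some of $x,y,z$ coincide with $u$ or $v$, i.e. there is cancellation. Tracking the cancellation: if, say, $z=v$, then $(*)$ becomes $x+y=u$, so $\{x,y\}$ is either a cone (again degree $0$, impossible) or a primitive collection of order $2$; by \cref{lemma: one true enemy} the latter means $y=x'$ and $x+x'=u$, which is exactly case b (after relabeling). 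If instead $S$ does not generate a cone, then $S$ contains a primitive collection $P'\subseteq S$; since any two elements of $S$ generate a cone would again lead to case-b cancellations or contradictions, the generic outcome is $P'=S$ itself, giving case a with primitive relation $(*)$ (here I use that $\sigma(P')$ must be $\{0\}$ because any nonzero minimal cone would, via \cref{prop:effective classes} and positivity, contradict that $u+v$ already writes the sum inside the cone $\langle u,v\rangle$; if $P'$ is a proper $2$-element subset we again route through \cref{lemma: one true enemy} to case b).

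Finally I would verify the two cases are mutually exclusive: in case a the curve class is a degree-$1$ extremal class supported on the relation $x+y+z=u+v$ with $|P'|=3$, while in case b the genuine primitive relation is the order-$2$ relation $x+x'=u$, and $z=v$ cancels; these cannot coincide since a primitive collection has a well-defined order. The step I expect to be the main obstacle is the careful bookkeeping of \emph{which} elements cancel between $\{x,y,z\}$ and $\{u,v\}$ and ruling out the partially-degenerate sub-cases (e.g. $u$ or $v$ equal to one of $x,y,z$ while the $x,y,z$ are still pairwise distinct) without accidentally creating a degree-$0$ primitive relation or a second opponent for some vector, contradicting \cref{lemma: one true enemy}; organizing this as "either a cone collapses (impossible by Fano) or we're forced into the opponent relation $x+x'=u$" should keep it manageable.
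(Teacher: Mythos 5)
Your overall strategy is the same as the paper's: split according to whether the pairs among $\{x,y,z\}$ span cones, use the Fano condition to kill nontrivial effective classes of nonpositive degree via \cref{prop:effective classes}, and use uniqueness of opponents (\cref{lemma: one true enemy}) to land in case b. But there is a concrete gap in how you reach case b. In your plan, case b only ever arises from a \emph{visible} cancellation: you start from ``say $z=v$'' and then analyze $x+y=u$. Case b must be derived, not assumed. The actual hypothesis is only that some pair, say $\{x,y\}$, fails to span a cone, so that $y=x'$ and $x+x'=\alpha$ for some single generator $\alpha\in G(\Sigma_X)$ by \cref{lemma: one true enemy}. Substituting into $(*)$ gives $\alpha+z=u+v$; since $\langle u,v\rangle\in\Sigma_X$, \cref{prop:effective classes} makes this an effective class of degree $2-2=0$, which the Fano hypothesis forces to be the trivial relation, i.e., $\{\alpha,z\}=\{u,v\}$ as multisets. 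Only then do you get $v=z$ up to relabeling and $u=\alpha=x+x'$. Your proposal never performs this substitution-and-degree-zero step, and without it the specific conclusion of case b (in particular $u=x+x'$) is not established. Moreover, inside your sub-case ``$\langle x,y,z\rangle\in\Sigma_X$'' the branch ``$\{x,y\}$ is a primitive collection of order $2$'' is vacuous, since $\langle x,y\rangle$ is then a face of $\langle x,y,z\rangle$; that whole sub-case is simply a contradiction (an effective class of degree $-1$, not $0$), and no instance of case b comes out of it. Organizing the dichotomy as the paper does --- ``some pair is an opponent pair'' versus ``every pair spans a cone'' --- also disposes of repeated entries among $x,y,z$ uniformly, which your plan only gestures at.

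Two smaller points. The parenthetical claim that $\sigma(P')$ must be $\{0\}$ is wrong and in fact impossible here: $u+v\neq 0$ because $\langle u,v\rangle$ is strongly convex, so $x+y+z\neq 0$. What you actually need in case a is that $\sigma(\{x,y,z\})$ is a face of $\langle u,v\rangle$ and that the unique expression of $x+y+z$ in its generators is $u+v$; this is how one concludes that $(*)$ is the primitive relation. Also, several of your ``degree $0$'' counts are really degree $-1$; the contradictions survive, but the one place where degree exactly $0$ matters --- forcing $\{\alpha,z\}=\{u,v\}$ --- is precisely the step you skipped.
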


\begin{proof}
Assume two of $\{x,y,z\}$ do not form a cone. 
For example, assume $x$, $y$ do not form a cone. Then $y=x'$, 
the opponent of $x$. Let $x+x'=\alpha$, for some $\alpha\in G(\Sigma_X)$. We have that $\alpha+z=u+v$. As $\langle u,v\rangle\in\Sigma_X$, \cref{prop:effective classes} implies that 
$\alpha+z=u+v$ corresponds to an effective class of degree $0$, which therefore implies that $\{\alpha,z\}=\{u,v\}$. Up to relabeling, we may assume $v=z$, and hence, $x+x'=u$ and we are in the situation b. 

Assume now that any two vectors in $\{x,y,z\}$ form a cone. Then $x,y,z$ are mutually disjoint and $\langle x,y,z\rangle\notin\Sigma_X$, as otherwise by \cref{prop:effective classes} we obtain an effective curve class  of degree $-1$, contradicting the fact that $X$ is Fano. 
It follows that $\{x,y,z\}$ is a primitive collection. Since $\langle u,v\rangle\in\Sigma_X$, it follows that $(*)$ is the associated primitive relation.  \cref{prop:deg1 extremal} now implies that the corresponding curve class is extremal. 
\end{proof}

\section{Toric Fano manifolds with $\minRC(X)=1$}
\label{sec:m=1}

In this section, we study toric Fano manifolds with $\minRC(X)=1$, and follow the strategy outlined in the introduction to show that they cannot be $2$-Fano. 

For any $x \in G(\Sigma_X)$, the set of primitive collections containing $x$ is denoted by
\[
\PC_x(X) = \{ P\in \PC(X) \ | \ x\in P\}.
\]

\begin{Prop}(\cite[Lemma 3.1]{Cas03}) 
\label{prop:decompositions m=1}
Assume that $X$ is a toric Fano manifold and that $P=\{x, -x \} \in \PC(X)$.
\begin{enumerate}
    \item Any $Q \in \PC_x(X) \setminus \{P\}$ has degree 1 (hence is extremal by \cref{prop:deg1 extremal}), and $r(Q)$ is of the form
    \[r(Q) \colon  x + {\underbrace{y_1 + \dots + y_h}_{\strut \scriptstyle \in \,\langle Q \setminus \{x\} \rangle}} = {\underbrace{z_1 + \dots + z_h}_{\strut \scriptstyle \in \sigma(Q)}},\]
    where we denote $\langle Q \setminus \{x\} \rangle \coloneqq \langle y_1, \dots, y_h \rangle$ and $\sigma(Q) \coloneqq \langle z_1, \dots, z_h \rangle$.
    
    \item For any $R \in \PC_x(X) \setminus \{P,Q\}$, we have
    \[ V( R \setminus \{x\} ) \cap V( Q \setminus \{x\} )= \varnothing  \quad \text{ and }\quad V(R \setminus \{x\}) \cap V(\sigma(Q))= \varnothing.\]

    \item For any $Q \in \PC_x(X) \setminus \{P\}$ with $r(Q) \colon  x + y_1 + \dots + y_h = z_1 + \dots + z_h$ we have $Q'=\{-x, z_1, \dots, z_h\} \in \PC_{-x}(X)$, $Q'$ has degree 1 (hence is extremal) and 
    \[ r(Q') \colon  -x + {\underbrace{z_1 + \dots + z_h}_{\strut \scriptstyle \in \langle Q' \setminus \{-x\} \rangle =\sigma(Q)}} ={\underbrace{y_1 + \dots + y_h}_{\strut \scriptstyle \in \sigma(Q')=\langle Q \setminus \{x\} \rangle }}. \]
\end{enumerate}
\end{Prop}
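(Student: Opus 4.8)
The plan is to prove (1), then to read (3) off from the data produced in (1), and finally to deduce (2) from (1) and (3). Throughout write $Q\setminus\{x\}=\{y_1,\dots,y_h\}$ and $\sigma(Q)=\langle z_1,\dots,z_s\rangle$, so that $r(Q)$ is $x+y_1+\dots+y_h=\mu_1z_1+\dots+\mu_sz_s$ with every $\mu_j\ge 1$ by minimality of $\sigma(Q)$, and record two easy facts: $-x\notin Q$ (otherwise $P=\{x,-x\}$ would be a proper subset of the primitive collection $Q$, hence span a cone), and $\langle y_1,\dots,y_h\rangle\in\Sigma_X$ since $Q\setminus\{x\}\subsetneq Q$. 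The key move for (1) is to reflect $r(Q)$ through $r(P)$: the relation $r(Q)$ can also be written $-x+\mu_1z_1+\dots+\mu_sz_s=y_1+\dots+y_h$, and since $\langle y_1,\dots,y_h\rangle$ spans a cone, \cref{prop:effective classes} shows that the corresponding class $\gamma\in\mathcal N_1(X)$ is effective; comparing coordinates in $\mathbb Z^{G(\Sigma_X)}$ gives $r(P)=r(Q)+\gamma$. As $\gamma$ has a positive coordinate (at the ray through $-x$) it is nonzero, so $-K_X\cdot\gamma\ge 1$ because $-K_X$ is ample; together with $\deg(Q)=-K_X\cdot r(Q)\ge 1$ and $-K_X\cdot r(P)=\deg(P)=2$ this forces $\deg(Q)=1$ — hence $r(Q)$ is extremal by \cref{prop:deg1 extremal} — and $-K_X\cdot\gamma=1$, i.e. $\sum_j\mu_j=h$.

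Two refinements finish (1). First, $-x\notin\sigma(Q)$: if $-x=z_1$, then $r(Q)$ rewrites as $y_1+\dots+y_h=(\mu_1+1)(-x)+\mu_2z_2+\dots+\mu_sz_s$, whose right-hand support is $G(\sigma(Q))$ and hence spans a cone, so by \cref{prop:effective classes} it defines an effective class of anticanonical degree $h-(1+\sum_j\mu_j)=-1$, contradicting ampleness of $-K_X$. Second, every $\mu_j=1$: expand $\gamma=\sum_k c_k\,r(P_k)$ with $c_k>0$ using Batyrev's description of $\NE(X)$ as the cone generated by primitive relations; since every $\deg(P_k)\ge 1$ we get $\sum_k c_k\le\sum_k c_k\deg(P_k)=-K_X\cdot\gamma=1$, and since every coordinate of a primitive-relation vector is $\le 1$ (it is $+1$ on the collection, $\le -1$ on the generators of its $\sigma$, and $0$ elsewhere), the $z_j$-coordinate of $\gamma$ gives $\mu_j=\sum_k c_k(r(P_k))_{z_j}\le\sum_k c_k\le 1$. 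Hence $\mu_j=1$ and $s=h$; combined with $Q\cap\sigma(Q)=\varnothing$ (so the $z_j$ are distinct from $x$ and the $y_i$) and with $-x\notin\sigma(Q)$, this is exactly the asserted form of $r(Q)$.

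For (3) put $Q'=\{-x,z_1,\dots,z_h\}$. It does not span a cone: otherwise \cref{prop:extremal forms cones} applied to the extremal relation $r(Q)$ with $\tau=\langle-x\rangle$ (legitimate since $-x\notin Q$ and $-x\notin\sigma(Q)$), deleting $x$ from $Q$, would force $\langle y_1,\dots,y_h,z_1,\dots,z_h,-x\rangle\in\Sigma_X$, impossible by smoothness since $-x=\sum_iy_i-\sum_jz_j$ is a nontrivial linear relation among these generators. Every proper subset of $Q'$ does span a cone: $\sigma(Q)$ is one, and if some $\langle-x,z_1,\dots,\check{z_k},\dots,z_h\rangle$ were not, it would contain a primitive collection $S=\{-x\}\cup S'$ with $\varnothing\ne S'\subsetneq\{z_1,\dots,z_h\}$ (a primitive collection inside $\{z_j\}$ alone being impossible); then $S\in\PC_{-x}(X)\setminus\{P\}$, so part (1) applied with $-x$ in place of $x$ gives $r(S)\colon-x+\sum_{z\in S'}z=b_1+\dots+b_{|S'|}$, and equating with $-x+\sum_{z\in S'}z=\sum_iy_i-\sum_{z\notin S'}z$ from $r(Q)$ produces, via \cref{prop:effective classes}, an effective class of anticanonical degree $0$, forcing each $z_j\notin S'$ to lie in $\{y_i\}$ — impossible as $Q\cap\sigma(Q)=\varnothing$ and $S'\ne\{z_1,\dots,z_h\}$. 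So $Q'$ is a primitive collection, and since $-x+\sum_jz_j=\sum_iy_i\in\RelInt\langle y_1,\dots,y_h\rangle$ we read off $\sigma(Q')=\langle Q\setminus\{x\}\rangle$ and $r(Q')\colon-x+z_1+\dots+z_h=y_1+\dots+y_h$, of degree $1$ and hence extremal.

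For (2), recall that $V(\sigma)\cap V(\tau)\ne\varnothing$ exactly when $G(\sigma)\cup G(\tau)$ spans a cone of $\Sigma_X$. Write $R\setminus\{x\}=\{u_1,\dots,u_p\}$, so by (1) $r(R)\colon x+u_1+\dots+u_p=w_1+\dots+w_p$ with $\sigma(R)=\langle w_1,\dots,w_p\rangle$. If $\langle u_1,\dots,u_p,z_1,\dots,z_h\rangle\in\Sigma_X$, then eliminating $x$ from $r(Q),r(R)$ gives $y_1+\dots+y_h+w_1+\dots+w_p=u_1+\dots+u_p+z_1+\dots+z_h$, whose right-hand support spans a cone, so by \cref{prop:effective classes} this is an effective class of anticanonical degree $0$, hence numerically trivial; thus $\{u_i\}\sqcup\{z_j\}=\{y_i\}\sqcup\{w_j\}$ as multisets, and since $\{z_j\}\cap\{y_i\}=\varnothing$ this yields $Q\subseteq R$, hence $Q=R$, a contradiction — so $V(R\setminus\{x\})\cap V(\sigma(Q))=\varnothing$. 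For the other intersection use that, by (3), $\sigma(Q')=\langle Q\setminus\{x\}\rangle$ for the primitive collection $Q'$ with extremal relation $r(Q')$; if $\langle(R\setminus\{x\})\cup(Q\setminus\{x\})\rangle\in\Sigma_X$, then \cref{prop:extremal forms cones} applied to $r(Q')$ with $\tau=\langle R\setminus\{x\}\rangle$, deleting $-x$ from $Q'$, produces the cone $\langle G(\sigma(Q))\cup(Q\setminus\{x\})\cup(R\setminus\{x\})\rangle\in\Sigma_X$, which has both $\langle R\setminus\{x\}\rangle$ and $G(\sigma(Q))$ among the faces spanned by subsets of its generators; hence $V(R\setminus\{x\})\cap V(\sigma(Q))\ne\varnothing$, contradicting the previous step. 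The step I expect to be the real crux is the normalization $\mu_j=1$ in (1): it rests entirely on the inequality $\sum_k c_k\le 1$ extracted from the Fano condition, and without it neither the shape of $r(Q)$ in (1) nor the identification $\sigma(Q')=\langle Q\setminus\{x\}\rangle$ in (3) is available. A secondary annoyance is that in (2), and in the minimality argument in (3), one must separately dispose of the degenerate configurations in which some of the rays $x,y_i,z_j,u_i,w_j$ coincide; these are handled by the same effective-class-of-nonpositive-degree arguments, using $Q\cap\sigma(Q)=\varnothing$ and the primitivity of $R$.
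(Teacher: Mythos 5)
The paper itself gives no proof of this proposition --- it is imported verbatim from \cite[Lemma 3.1]{Cas03} --- so there is no internal argument to compare against. Your reconstruction is, as far as I can check, correct and follows the natural route (essentially Casagrande's): write $r(P)=r(Q)+\gamma$ with $\gamma$ effective by \cref{prop:effective classes}, squeeze $2=-K_X\cdot r(P)=\deg(Q)+(-K_X\cdot\gamma)$ to force $\deg(Q)=1$, and then expand $\gamma$ in primitive relations to normalize every $\mu_j$ to $1$ and to exclude $-x$ from $\sigma(Q)$; parts (2) and (3) then follow from \cref{prop:extremal forms cones}, \cref{prop:deg1 extremal} and zero-degree effective classes. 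The degree computation, the identification $\sigma(Q')=\langle Q\setminus\{x\}\rangle$ via smoothness of $\langle y_1,\dots,y_h\rangle$, and the primitivity check for $Q'$ all hold up.

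The one place you should tighten is the second half of (2). \cref{prop:extremal forms cones} requires $G(\tau)\cap Q'=G(\tau)\cap G(\sigma(Q'))=\varnothing$, and with $\tau=\langle R\setminus\{x\}\rangle$ this can fail when $R\setminus\{x\}$ meets $Q\setminus\{x\}$ or $G(\sigma(Q))$; you flag this at the end, but the promised ``same effective-class arguments'' are not actually what is needed there. The clean fix is to take instead $\tau=\langle (R\setminus\{x\})\setminus\bigl(Q\cup G(\sigma(Q))\bigr)\rangle$, which is a face of $\langle R\setminus\{x\}\rangle$, satisfies the disjointness hypotheses ($-x\notin R$ since $R\neq P$), and still has $\langle\sigma(Q'),\tau\rangle$ a face of the cone assumed to exist; the resulting cone $\langle Q'\setminus\{-x\},\sigma(Q'),\tau\rangle$ then contains $(R\setminus\{x\})\cup G(\sigma(Q))$ among its generators, contradicting the first half of (2). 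With that adjustment the argument is complete.
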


\begin{Cor}
Let $X$ be a toric manifold, and $P=\{x, -x \} \in \PC(X)$.
With \cref{not:no proj bundle},
\begin{equation}\label{equ:E_P for m=1}
\cE_P = \left\{ \langle Q \setminus \{v\} \rangle  \mid Q \in \PC_v(X) \setminus \{P\},  \quad  v = \pm x \right\}.
\end{equation}
If moreover $X$ is Fano, then $V(\mathcal{E}_P)$ has $0$, $2$ or $4$ components of codimension $1$ in $X$.
\end{Cor}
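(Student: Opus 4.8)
The plan is to first establish the formula \eqref{equ:E_P for m=1} for $\cE_P$, and then analyze which cones in $\cE_P$ have dimension $1$ (equivalently, which $V(\sigma)$ have codimension $1$). For the formula: by \cref{not:no proj bundle}, a cone $\sigma\in\Sigma_X$ lies in $\cE_P$ iff $P\cap G(\sigma)=\varnothing$ and $G(\sigma)\cup P'\in\PC(X)$ for some $P'\subsetneq P$. Since $|P|=2$, the only proper nonempty subsets of $P$ are $\{x\}$ and $\{-x\}$, so $P'\cup G(\sigma)$ being a primitive collection means $\{v\}\cup G(\sigma)\in\PC(X)$ for $v=x$ or $v=-x$; writing $Q=\{v\}\cup G(\sigma)$, this says exactly $Q\in\PC_v(X)$ with $G(\sigma)=Q\setminus\{v\}$. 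One must check $Q\neq P$: if $Q=P$ then $G(\sigma)=\{-v\}$, but $\{-v\}$ is a single ray, hence a cone, contradicting that it would have to be "the rest" of a primitive collection of order $2$ — actually more directly, $G(\sigma)\cap P=\varnothing$ forces $-v\notin G(\sigma)$, so $Q\neq P$ automatically. This gives \eqref{equ:E_P for m=1}. (One should also note every such $Q\setminus\{v\}$ genuinely spans a cone of $\Sigma_X$ by the definition of primitive collection, so the right-hand side makes sense as a subset of $\Sigma_X$.)

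Now assume $X$ is Fano. A component of $V(\cE_P)$ of codimension $1$ in $X$ corresponds to a ray $\langle w\rangle\in\cE_P$, i.e.\ to some $Q\in\PC_v(X)\setminus\{P\}$ ($v=\pm x$) with $Q\setminus\{v\}=\{w\}$ a single vector, so $Q=\{v,w\}$ has order $2$. By \cref{prop:decompositions m=1}(1), such a $Q$ has degree $1$, so $r(Q)\colon v+w=z$ for a single $z\in G(\Sigma_X)$; and by \cref{lemma: one true enemy}'s analogue in the $m=1$ setting — more precisely by \cref{prop:PC order 2}(2) — there are at most two order-$2$ primitive collections containing $v$, and if there are two they are $\{v,w\}$ and $\{v,z\}$ with $v+w=-z'$, $v+z=-w'$... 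I will instead invoke \cref{prop:PC order 2}(2) directly: for each of $v=x$ and $v=-x$, the number of order-$2$ primitive collections $Q\in\PC_v(X)\setminus\{P\}$ is $0$, $1$, or $2$. Hence the total count of codimension-$1$ components, summing over $v\in\{x,-x\}$, is at most $4$.

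It remains to rule out the odd values and odd partial counts. The key symmetry is \cref{prop:decompositions m=1}(3): given $Q=\{x,w\}\in\PC_x(X)$ of order $2$ with relation $x+w=z$, one gets $Q'=\{-x,z\}\in\PC_{-x}(X)$ of order $2$ with relation $-x+z=w$. This defines an involution between the order-$2$ primitive collections containing $x$ and those containing $-x$ (it is its own inverse by symmetry of the construction, since applying (3) twice returns $x+w=z$). Therefore the two numbers — the count for $v=x$ and the count for $v=-x$ — are equal, each being $0$, $1$, or $2$. So the total number of codimension-$1$ components of $V(\cE_P)$ is $2\cdot(0,1,\text{ or }2)=0$, $2$, or $4$, as claimed.

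The main obstacle I anticipate is the bookkeeping in the first paragraph: making sure the correspondence $\sigma\leftrightarrow Q$ is genuinely a bijection onto the indexing set in \eqref{equ:E_P for m=1} and that distinct $Q$'s can give the same cone $\langle Q\setminus\{v\}\rangle$ only when they "should" (they can, e.g.\ $\langle Q\setminus\{v\}\rangle$ might coincide for a higher-order $Q$ and contribute the same non-codimension-$1$ stratum — but this does not affect the codimension-$1$ count, which only sees order-$2$ $Q$'s). The counting argument in the last paragraph is then robust, resting entirely on parts (1) and (3) of \cref{prop:decompositions m=1} together with \cref{prop:PC order 2}(2).
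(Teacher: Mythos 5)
Your proof is correct and takes essentially the same route as the paper: the description of $\cE_P$ is unwound directly from \cref{not:no proj bundle}, and the codimension-one count is reduced, via \cref{prop:PC order 2}(2) and the order-$2$ case of the correspondence in \cref{prop:decompositions m=1}(3), to twice the number of order-$2$ collections in $\PC_x(X)\setminus\{P\}$, giving $0$, $2$ or $4$. The paper's own proof is simply a terser citation of the same two propositions; your explicit check that the correspondence $Q\mapsto Q'$ is an involution is exactly the detail being invoked there.
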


\begin{proof}
Let $X$ be a toric manifold, and $P=\{x, -x \} \in \PC(X)$.
The description of $V(\mathcal{E}_P)$ in \cref{equ:E_P for m=1} follows from
\cref{not:no proj bundle}.
In the Fano case, the number of components of codimension $1$ of $V(\mathcal{E}_P)$ equals the number of primitive collections of order $2$ and degree $1$ containing $x$ or $-x$, which is $0$, $2$ or $4$ by \cref{prop:PC order 2} and \cref{prop:decompositions m=1}.
\end{proof}

\begin{Prop} \label{prop:reduction codim 2}
Let $X$ be a toric Fano manifold,  
and $P=\{x, -x \} \in \PC(X)$. Then there exists a birational morphism
$f \colon X \rightarrow Y$ such that 
\begin{itemize}
    \item $P_Y \coloneqq \{x,-x\} \in \PC(Y)$,
    \item $V(\mathcal{E}_{P_Y})$ has codimension $\geq 2$ in $Y$,
    \item $f$ is a composition of at most two blow-downs with disjoint centers and smooth target:
    \begin{align} \label{eq:blowup loci}
    \nonumber \mathrm{Exc}(f) & = \bigcup_{\substack{Q \in \PC_x(X) \setminus \{P\} \,:\,\\  \ord(Q)=2}} V(\sigma(Q)) \, \subset X, \\
    f(\mathrm{Exc}(f))& = \bigcup_{\substack{Q \in \PC_x(X) \setminus \{P\} \,:\,\\ \ord(Q)=2}} V(Q) \, \subset Y.
    \end{align}
\end{itemize}
\end{Prop}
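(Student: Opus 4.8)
The plan is to contract, one at a time, the $T$-invariant divisors that appear as codimension-$1$ components of $V(\cE_P)$, keeping careful track of primitive collections through each step. By the corollary preceding this proposition, $V(\cE_P)$ has $0$, $2$, or $4$ codimension-$1$ components; by \cref{equ:E_P for m=1} these are the divisors $V(\langle Q\setminus\{v\}\rangle)$ with $Q\in\PC_v(X)\setminus\{P\}$ of order $2$ and $v=\pm x$, and by \cref{prop:PC order 2}(2) together with \cref{prop:decompositions m=1}(3) they come in pairs $V(\langle y\rangle)$, $V(\langle z\rangle)=V(\sigma(Q))$ indexed by the $j\le 2$ order-$2$ collections $Q=\{x,y\}$ in $\PC_x(X)\setminus\{P\}$, where $r(Q)\colon x+y=z$ and $Q'\coloneqq\{-x,z\}\in\PC_{-x}(X)\setminus\{P\}$.

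Next I analyze the effect of contracting one such divisor. Fix $Q=\{x,y\}$ of order $2$ with $r(Q)\colon x+y=z$; since $\deg(Q)=1$ and $X$ is Fano, $r(Q)$ is extremal (\cref{prop:deg1 extremal}), and by \cref{prop:PC_blowdown} the associated contraction $f_Q\colon X\to X_1$ deletes the ray $z$ and realizes $X$ as the blow-up of the smooth variety $X_1$ along $V(x,y)=V(Q)$, so $\mathrm{Exc}(f_Q)=V(z)=V(\sigma(Q))$ and $f_Q(\mathrm{Exc}(f_Q))=V(Q)$. Applying \cref{prop:extremal forms cones} with $\tau=\{0\}$ to the extremal relations $r(Q)\colon x+y=z$ and $r(Q')\colon -x+z=y$ yields $\langle x,z\rangle,\langle y,z\rangle,\langle -x,y\rangle\in\Sigma_X$; none of these cones contains $z$, so all of them lie in $\Sigma_{X_1}$. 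Hence $\{x,y\}$ and $\{-x,y\}$ are cones of $X_1$, so $V(\langle y\rangle)$ is no longer a codimension-$1$ component of $V(\cE_P)$ over $X_1$; and by the second clause of \cref{prop:PC_blowdown} no new order-$2$ primitive collection containing $\pm x$ is created (every new primitive collection contains $\{x,y\}$, and an order-$2$ one would force a primitive collection of $X$ contained in $\{x,y,z\}$ and containing $z$, which is impossible since $\{x,z\}$, $\{y,z\}$ are cones and $\{x,y,z\}$ properly contains $Q$), while $Q'$ is absorbed because $(Q'\setminus\{z\})\cup\{x\}=P$ is already primitive. Also $P=\{x,-x\}$ remains a primitive collection of $X_1$, and any order-$2$ collection in $\PC_x(X)\setminus\{P,Q\}$ survives unchanged. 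So contracting $V(\sigma(Q))$ removes precisely the two components $V(\langle y\rangle)$, $V(\langle z\rangle)$ from $V(\cE_P)$ and preserves all the hypotheses.

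It remains to see that, when $j=2$, the two required contractions can be performed in succession. Let $Q_1=\{x,y\}$, $Q_2=\{x,w\}$ be the two order-$2$ collections; by \cref{prop:PC order 2}(2) we have $r(Q_1)\colon x+y=-w$ and $r(Q_2)\colon x+w=-y$, so the divisors to contract are $V(-w)$ and $V(-y)$. By \cref{prop:decompositions m=1}(3) the collections $\{-x,-w\}$ and $\{-x,-y\}$ lie in $\PC_{-x}(X)\setminus\{P\}$, and \cref{prop:decompositions m=1}(2) applied at the vertex $-x$ to this pair gives $V(-w)\cap V(-y)=\varnothing$. Therefore no cone of $\Sigma_X$ contains both rays $-w$ and $-y$, so deleting both rays from $\Sigma_X$ is unambiguous; each deletion is a smooth blow-down by \cref{prop:PC_blowdown} (each $r(Q_i)$ being extremal in the Fano $X$), hence the resulting fan $\Sigma_Y$ is regular and $f\colon X\to Y$ is the composition of the two blow-downs, in either order. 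Since near either deleted ray the intermediate fan agrees with $\Sigma_X$, the analysis of the previous paragraph applies verbatim at both steps. Finally, adding the two relations gives $x+y+w=0$ in $N$, so $\langle x,y,w\rangle\notin\Sigma_Y$ because $Y$ is smooth, and the two centers $V(x,y)=f(V(-w))$ and $V(x,w)=f(V(-y))$ are disjoint in $Y$.

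Assembling the pieces: take $f=\mathrm{id}$ if $j=0$, and otherwise let $f\colon X\to Y$ be the composition of the (at most two) blow-downs contracting the $V(\sigma(Q))$ over the order-$2$ collections $Q\in\PC_x(X)\setminus\{P\}$. Then $Y$ is a smooth toric variety, $f$ is a composition of at most two blow-downs with smooth targets and disjoint centers, $P_Y=\{x,-x\}\in\PC(Y)$, and the second and third steps show that all $2j$ codimension-$1$ components of $V(\cE_P)$ are removed and none are created, so $\codim_Y V(\cE_{P_Y})\ge 2$; finally $\mathrm{Exc}(f)=\bigcup V(\sigma(Q))$ and $f(\mathrm{Exc}(f))=\bigcup V(Q)$, the unions running over the order-$2$ $Q\in\PC_x(X)\setminus\{P\}$. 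The main obstacle is the second step: verifying, via \cref{prop:extremal forms cones} and the description of new primitive collections in \cref{prop:PC_blowdown}, that contracting $V(\sigma(Q))$ genuinely removes $V(\langle y\rangle)$ as a component and creates no new codimension-$1$ components of $V(\cE_P)$; and, when $j=2$, handling the disjointness of \cref{prop:decompositions m=1}(2) that lets the second extremal contraction survive the first.
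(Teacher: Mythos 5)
Your proof is correct and follows essentially the same route as the paper: split into the cases of $0$, $2$, or $4$ codimension-one components of $V(\mathcal{E}_P)$, contract the degree-one (hence extremal) relations $r(Q_i)$, and track the primitive collections through each blow-down via \cref{prop:PC_blowdown} to see that no order-$2$ collection containing $\pm x$ other than $P$ survives or is created. Two small remarks: the parenthetical ``none of these cones contains $z$'' is off --- $\langle x,z\rangle$ and $\langle y,z\rangle$ do contain $z$ and are discarded, and what actually matters is that $\langle x,y\rangle$ becomes the blow-up center while $\langle -x,y\rangle$ survives; and in the four-component case the paper justifies the second contraction by re-deriving extremality of $r(Q_2)$ in $\NE(X_1)$ from the updated list of primitive collections (using Casagrande's fact that every other primitive collection avoids $\{\pm x,\pm y,\pm w\}$), whereas you rely on the disjointness $V(-w)\cap V(-y)=\varnothing$ and the star of $-y$ being unchanged --- a workable substitute, but one that quietly bypasses the extremality hypothesis under which \cref{prop:PC_blowdown} is stated.
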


\begin{proof}
If $V(\mathcal{E}_P)$ has codimension $\geq 2$ in $X$, i.e., if $P$ is the unique primitive collection of order $2$ containing $x$, then the statement holds with $f=\Id$.
\medskip 

Assume now that $V(\mathcal{E}_P)$ has $2$ components of codimension $1$, i.e. we have primitive relations
\[  r(P) \colon x+ (-x)=0,  \quad \quad 
    r(Q_1) \colon x+y=z,  \quad \quad 
    r(Q_1') \colon -x+z=y, \]
and, for any other $R \in \PC_x(X) \cup \PC_{-x}(X)$, one has $\ord( R \setminus \{ \pm x\} )\geq 2$. 
Let $f_1 \colon X \rightarrow Y$ be the smooth blow-down induced by the extremal ray of $\mathrm{NE}(X)$ corresponding to $r(Q_1)$. By \cref{prop:PC_blowdown} and \cref{prop:decompositions m=1}, $\PC_x(Y) \cup \PC_{-x}(Y)$ consists of
\begin{itemize}
    \item[-] $P_{Y}=\{x,-x\}$;
    \item[-] $R_{Y}=R_X$ for some $R_X \in \PC_x(X) \cup \PC_{-x}(X) \setminus \{Q_1\}$ such that $z \notin R_X$. In particular we have $\ord(R_{Y} \setminus \{\pm x\}) \geq 2$;
    \item[-] $R_{Y}= (R_X \setminus \{z\}) \cup \{x,y\}$ for some $R_X \in \PC_z(X)$ such that $(R_X \setminus \{z\}) \cup \{x\} \notin \PC(X)$ and $(R_X \setminus \{z\}) \cup \{y\} \notin \PC(X)$. In particular, we have $ \langle R_{Y} \setminus \{x\} \rangle = 
    \langle  R_{X} \setminus \{z\} , y \rangle
    $, so $\ord( R_{Y} \setminus \{x\} ) \geq 2$.
\end{itemize}
It follows that $V(\mathcal{E}_{P_{Y}})$ has codimension $\geq 2$ in $Y$, so the proposition holds with $f=f_1$.
\medskip

Assume now that $V(\mathcal{E}_P)$ has $4$ components of codimension $1$, i.e., we have
\begin{align*}
r(P)\colon x+(-x)&=0            &
r(Q_1)\colon  x+y&=-w           &
r(Q_1')\colon -x+(-w)&=y        \\
y+ (-y)&=0                      &
r(Q_2)\colon x+w&=-y            &
r(Q_2')\colon -x+(-y)&=w        \\
w+ (-w)&=0                      &
w+y&=-x                         &
-y+(-w)&=x                      &
\end{align*}
by \cref{prop:PC order 2}.
By \cite[p.1487, Case (3)]{Cas03}, any other primitive collection $R \in \PC(X)$ is disjoint from $\{x,-x,y,-y,w,-w\}$. Let $f_1 \colon X \rightarrow X_1$ be the smooth blow-down induced by the extremal ray of $\mathrm{NE}(X)$ corresponding to $r(Q_1)$. By \cref{prop:PC_blowdown} the primitive collections of $X_1$ containing $x$ or $-x$ are only 
$$P_{X_1}=P \quad \quad (Q_2)_{X_1}=Q_2 \quad \quad (Q'_2)_{X_1}=Q'_2.$$ 
It follows that $r(Q_2)$ corresponds to an extremal curve class in $\mathrm{NE}(X_1)$. Let $f_2 \colon X_1 \rightarrow X_2$ be the smooth blow-down induced by the extremal ray of $\mathrm{NE}(X)$ corresponding to $r(Q_2)$. By \cref{prop:PC_blowdown} $P_{X_2}=\{x,-x\}$ is the only primitive collection in $\PC_x(X_2) \cup \PC_{-x}(X_2)$. This implies that $V(\mathcal{E}_{P_{X_2}}) = \varnothing$, so the proposition holds with $Y=X_2$ and $f=f_2 \circ f_1$. 
\end{proof}

\begin{Constr}\label{lem:construction S in Y}
Let $Y$ be a projective toric manifold of dimension $\geq 3$, and $P=\{x, -x \} \in \PC(Y)$ a centrally symmetric primitive collection of order $2$. Let $V(\mathcal{E}_{P})\subset Y$ be the closed subset defined in \cref{not:no proj bundle}, set $U:=Y \setminus V(\mathcal{E}_{P})$, and let $\pi: U \rightarrow W$ be the $\PS^1$-bundle given by \cref{prop:Pk bundle}.

\vspace{-3pt}
\begin{figure}[ht]
\includegraphics[width=0.5\textwidth]{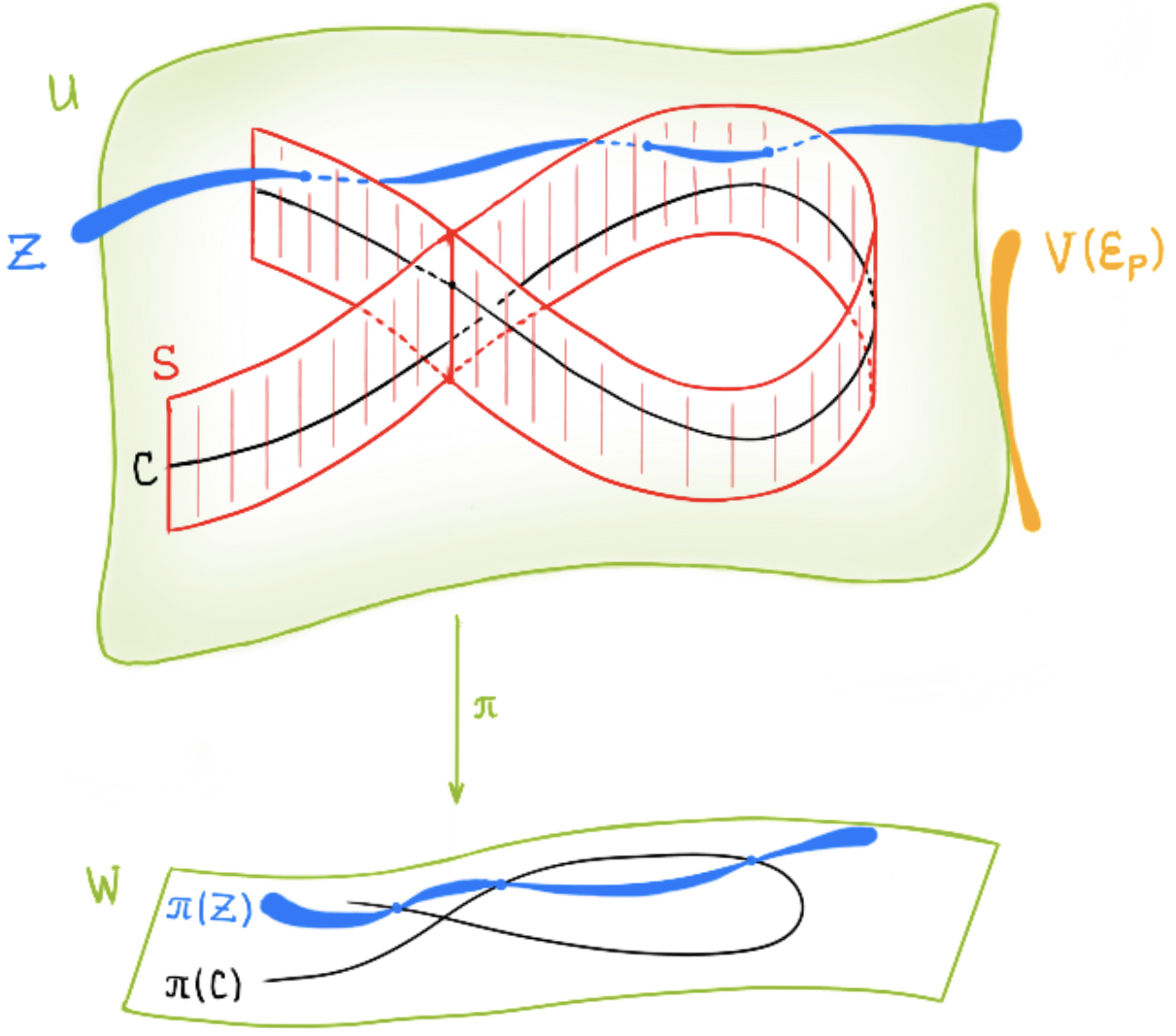}
\end{figure}

Assume that $V(\mathcal{E}_P)$ has codimension $\geq 2$ in $Y$, and let $Z \subset Y$ be any given closed subset of codimension $\geq 2$ in $Y$.
Note that a toric manifold is rational, hence rationally connected, so
by \cite[Proposition II.3.7]{kollar96}, there is a smooth (very free) rational curve $C \subset U \setminus Z$.
Consider the surface $S \coloneqq \pi^{-1}(\pi(C))\subset U$, and let $n\colon \tilde S\to S$ be its normalization. Then $\tilde S$ is a Hirzebruch surface with $\PS^1$-bundle structure $\tilde \pi:\tilde S\to \PS^1$ induced by $\pi$. The curve class of the image of a fiber of $\tilde \pi$ on $Y$   corresponds to the centrally symmetric relation $x+(-x)=0$.
By taking $C$ general, we may assume that $\pi(C)$ and $\pi(Z)$ meet transversely in at most finitely many general points.
Hence $S$ and $Z$ meet transversely in at most finitely many points. 
\end{Constr}

\begin{Prop} \label{thm:S.chY}
Let $Y$ be a projective toric manifold of dimension $\geq 3$, and $P_Y=\{x, -x \} \in \PC(Y)$ a centrally symmetric primitive collection of order $2$. Assume that $V(\mathcal{E}_P)$ has codimension $\geq 2$ in $Y$, and let $S \subset Y$ be as in \cref{lem:construction S in Y}. Then $S \cdot \chn_2(Y) = 0$.
\end{Prop}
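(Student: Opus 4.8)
The strategy is to pull everything back to the Hirzebruch surface $\tilde S$ and use the toric description of $\chn_2$. Write $i\colon S\hookrightarrow Y$ for the inclusion, $n\colon\tilde S\to S$ for the normalization, and $\tilde f\coloneqq i\circ n\colon\tilde S\to Y$. Since $\tilde f$ is proper with $\tilde f_*[\tilde S]=[S]$, the projection formula gives $S\cdot\chn_2(Y)=\int_{\tilde S}\tilde f^*\chn_2(Y)$, so it suffices to show the latter vanishes. By the formula recalled in \cref{section:background}, $2\,\chn_2(Y)=\sum_{v\in G(\Sigma_Y)}V(v)^2$, so $2\int_{\tilde S}\tilde f^*\chn_2(Y)=\sum_{v\in G(\Sigma_Y)}\bigl(\tilde f^*V(v)\bigr)^2$, and I would compute this sum using the $\PS^1$-bundle $\pi\colon U\to W$ of \cref{prop:Pk bundle}, through which $\tilde f$ factors because its image $S$ is contained in $U=Y\setminus V(\mathcal{E}_P)$. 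The key preliminary observation is that, since $V(\mathcal{E}_P)$ has codimension $\geq 2$ in $Y$ while each $V(v)$ is a prime divisor, \cref{Lem:fanU} forces every ray of $\Sigma_Y$ to be a ray of $\Sigma_U$; and the splitting behind \cref{prop:Pk bundle} identifies the rays of $\Sigma_U$ other than $x,-x$ with the rays of $W$, in such a way that for each such ray $v$ the divisor $V(v)\cap U$ maps into a proper closed subset $V(\bar v)\subsetneq W$.

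I would then split the sum according to whether $v\in\{x,-x\}$. For $v\neq\pm x$, a general fibre of $\pi$ is disjoint from $V(v)$; since $\pi\circ\tilde f$ factors as $\tilde S\xrightarrow{\tilde\pi}\PS^1\to B\hookrightarrow W$ with $B=\pi(S)=\pi(C)$ a general curve not contained in $V(\bar v)$, a general fibre $F$ of the ruling $\tilde\pi$ maps under $\tilde f$ onto a general fibre of $\pi$, so $\tilde f^*V(v)\cdot F=0$; hence $\tilde f^*V(v)$ is numerically a multiple of $F$ and $(\tilde f^*V(v))^2=0$. For $v=\pm x$, set $D_\pm\coloneqq\tilde f^*V(\pm x)$. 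Because $\{x,-x\}\in\PC(Y)$, the cone $\langle x,-x\rangle$ is not in $\Sigma_Y$, so $V(x)\cap V(-x)=\varnothing$; thus $D_+$ and $D_-$ have disjoint supports and $D_+\cdot D_-=0$. On the other hand $\tilde f_*[F]$ is a positive multiple $d\,[\pi^{-1}(\beta)]$ of a fibre of $\pi$, and each $V(\pm x)$ is a section of $\pi$, so $D_+\cdot F=D_-\cdot F=d$. Writing numerical divisor classes on $\tilde S$ in the basis $\{\Gamma_0,F\}$ given by a section $\Gamma_0$ with $\Gamma_0^2=-e$ and a fibre $F$ (so $\Gamma_0\cdot F=1$, $F^2=0$), the equalities $D_\pm\cdot F=d$ force $D_\pm\equiv d\,\Gamma_0+b_\pm F$ for some $b_\pm\in\Q$; then $0=D_+\cdot D_-=-d^2e+d(b_++b_-)$, so that
\[
D_+^2+D_-^2=(-d^2e+2db_+)+(-d^2e+2db_-)=-2d^2e+2d(b_++b_-)=-2d^2e+2d^2e=0 .
\]
Combining the two cases, $\sum_{v\in G(\Sigma_Y)}\bigl(\tilde f^*V(v)\bigr)^2=0$, hence $S\cdot\chn_2(Y)=0$.

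The main obstacle, and really the only step needing care, is the toric bookkeeping of the first paragraph: verifying that every ray of $\Sigma_Y$ survives in $\Sigma_U$ — this is exactly where the hypothesis $\codim_Y V(\mathcal{E}_P)\geq 2$ is used — and that, apart from the two sections $V(x)$, $V(-x)$, every $T$-invariant prime divisor of $U$ is $\pi$-vertical, both of which come from the split-fan description in the proof of \cref{prop:Pk bundle}. Granting that, the factorization of $\tilde f$ through $U$ and then through the curve $B$, the identity $\tilde f_*[\tilde S]=[S]$, and the self-intersection computation on $\tilde S$ are all routine.
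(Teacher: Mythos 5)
Your proposal is correct and follows essentially the same route as the paper: pull everything back to the Hirzebruch surface $\tilde S$, observe that $\tilde f^*V(v)$ is numerically a multiple of the fibre class $F$ for $v\neq\pm x$ (hence squares to zero), and use $V(x)\cap V(-x)=\varnothing$ together with the equal $F$-degrees of $\tilde f^*V(x)$ and $\tilde f^*V(-x)$ to get $D_+^2+D_-^2=(D_+-D_-)^2=0$. The paper phrases the last step as $(\sigma-\sigma')^2=0$ for the two section classes, which is the same computation; your extra bookkeeping about which rays of $\Sigma_Y$ survive in $\Sigma_U$ is harmless but not needed, since the intersection numbers $V(v)\cdot F$ are read off directly from the relation $x+(-x)=0$.
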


\begin{proof}
Let $S = \pi^{-1}(\pi(C))$ be the surface from  \cref{lem:construction S in Y},
$n \colon \tilde S\to S$ its normalization, $\tilde \pi \colon \tilde S\to \PS^1$ the
$\PS^1$-bundle structure induced by $\pi$, and $F$ a fiber of $\tilde \pi$.
Our goal is to compute
\[
S \cdot \chn_2(Y) = \frac{1}{2} \sum_{v \in G(\Sigma_Y)} S \cdot V(v)^2
= \frac{1}{2} \sum_{v \in G(\Sigma_Y)}  \big(n^*V(v)\big)^2.
\]

Recall that the curve class of the image of $F$ in $Y$ is associated to the relation $x+(-x)=0$.
By restricting the divisors $n^*V(v)$ to $F$, we have:
\[
\begin{cases}
n^*V(v) \cdot F =0 & \text{ if } v \neq x,-x, \\
n^*V(x) \cdot F =1, &  \\
n^*V(-x) \cdot F =1. &  
\end{cases} 
\]
Hence there are sections $\sigma, \sigma'$ of $\tilde \pi \colon \tilde S\to \PS^1$, and $\alpha, \beta, \gamma \in \mathbb{Z}$ such that, on $\tilde S$,
\[
\begin{cases}
n^*V(v) =\alpha F 
& \text{ if } v \neq x,-x, \\
n^*V(x) =\sigma + \beta F, &  \\
n^*V(-x) =\sigma' + \gamma F. &  
\end{cases}
\]
Therefore 
\begin{align*}
\sum_{v \in G(\Sigma_Y)}  \big(n^*V(v)\big)^2
& = \sum_{v \neq x,-x} \cancel{\big(n^*V(v)\big)^2} + \big(n^*V(x)\big)^2 + \big(n^*V(-x)\big)^2 \\
& = \sigma^2 + 2\beta + \sigma'^2 + 2\gamma \\
& = \sigma^2 - 2(\sigma \cdot \sigma') + \sigma'^2  
\quad \text{ as }  \sigma \cdot \sigma' + \beta + \gamma 
= n^*V(x)\cdot n^*V(-x) = 0\\
& = (\sigma - \sigma')^2 =0 
\qquad \quad \hspace{5pt}
\text{ as $\sigma - \sigma'$ is a multiple of $F$},
\end{align*}
and this concludes the proof.
\end{proof}

\begin{Lem}(\cite[Lemma 5.1]{dJS06})
\label{Lem:ch_blowup}
Consider the blowup diagram
\begin{diagram}
E & \rInto^j & X \coloneqq \Bl_Z Y \\
\dTo^{\pi\coloneqq f_{|E}} && \dTo_f \\
Z & \rInto & Y
\end{diagram}
where both $Y$ and $Z$ are smooth projective varieties and $\codim_Y Z = c\geq 2$. 
Then we have the following relation between the 2nd Chern characters of $X$ and $Y$:
$$ \chn_2 (X) =
f^* \chn_2 (Y) + \frac{c+1}{2} E^2 - j_*\pi^* \cn_1(\mathcal{N}_{Z/Y})
.$$
\end{Lem}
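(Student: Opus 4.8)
The plan is to derive the identity from the standard blow-up tangent sequence, converted into intersection-theoretic data via the Grothendieck--Riemann--Roch formula for the divisorial embedding $j\colon E\hookrightarrow X$; this is efficient because $E\cong\PP(\mathcal{N}_{Z/Y})$ is a $\PP^{c-1}$-bundle over $Z$, so everything is computed on an explicit space. First I would fix the geometry of the exceptional divisor: $\pi\colon E\to Z$ is the bundle projection, $\mathcal{N}_{E/X}\cong\mathcal{O}_X(E)|_E\cong\mathcal{O}_E(-1)$, and, writing $h\coloneqq\cn_1(\mathcal{O}_E(1))$, the self-intersection formula gives $E^2=j_*\cn_1(\mathcal{N}_{E/X})=-\,j_*(h)$. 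This last relation is exactly what will convert the $j_*(h)$-terms below into the $E^2$-term of the statement.

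Next I would bring in the blow-up tangent sequence on $X$,
\[ 0\longrightarrow T_X\longrightarrow f^*T_Y\longrightarrow j_*\mathcal{Q}\longrightarrow 0,\qquad \mathcal{Q}\coloneqq T_{E/Z}\otimes\mathcal{O}_E(E), \]
in which $\mathcal{Q}$ is locally free of rank $c-1$ on $E$. Its first Chern class I would read off from the relative Euler sequence $0\to\mathcal{O}_E\to\pi^*\mathcal{N}_{Z/Y}\otimes\mathcal{O}_E(1)\to T_{E/Z}\to 0$, which gives $\cn_1(T_{E/Z})=\pi^*\cn_1(\mathcal{N}_{Z/Y})+c\,h$; twisting by $\mathcal{O}_E(E)\cong\mathcal{O}_E(-1)$ then yields $\cn_1(\mathcal{Q})=\pi^*\cn_1(\mathcal{N}_{Z/Y})+h$.

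Finally I would pass to second Chern characters. Additivity of $\chn$ in short exact sequences together with $\chn(f^*T_Y)=f^*\chn(T_Y)$ gives $\chn_2(X)=f^*\chn_2(Y)-\chn_2(j_*\mathcal{Q})$. By Grothendieck--Riemann--Roch for the codimension-one regular embedding $j$, $\chn(j_*\mathcal{Q})=j_*\big(\chn(\mathcal{Q})\cdot\operatorname{td}(\mathcal{N}_{E/X})^{-1}\big)$; since $\operatorname{td}(\mathcal{N}_{E/X})^{-1}=1+\tfrac{1}{2}h+\cdots$, the degree-one part of the bracket is $\cn_1(\mathcal{Q})+\tfrac{c-1}{2}h$, and $j_*$ raises degree by one, so
\begin{align*}
\chn_2(j_*\mathcal{Q})
&= j_*\!\big(\cn_1(\mathcal{Q})+\tfrac{c-1}{2}h\big)
= j_*\pi^*\cn_1(\mathcal{N}_{Z/Y})+\tfrac{c+1}{2}\,j_*(h) \\
&= j_*\pi^*\cn_1(\mathcal{N}_{Z/Y})-\tfrac{c+1}{2}E^2,
\end{align*}
using $j_*(h)=-E^2$. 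Substituting back into $\chn_2(X)=f^*\chn_2(Y)-\chn_2(j_*\mathcal{Q})$ yields precisely the claimed formula.

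The step I expect to be the real obstacle is getting the conventions right in the second paragraph: that $\mathcal{O}_X(E)|_E\cong\mathcal{O}_E(-1)$, and that the cokernel of $T_X\to f^*T_Y$ is the \emph{twisted} relative tangent sheaf $T_{E/Z}\otimes\mathcal{O}_E(E)$ rather than $T_{E/Z}$ itself, since either slip would change $\cn_1(\mathcal{Q})$ and hence the coefficient of $E^2$. To catch sign errors I would test the formula on $Y=\PP^2$, $Z$ a point, $c=2$, where $X=\FF_1$: there $E^2=-1$, $\cn_1(\mathcal{N}_{Z/Y})=0$, $\chn_2(\PP^2)=\tfrac{3}{2}$ and $\chn_2(\FF_1)=0$, consistent with $0=\tfrac{3}{2}+\tfrac{3}{2}(-1)-0$. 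Everything else is routine Chern-class bookkeeping, so I do not anticipate further difficulties.
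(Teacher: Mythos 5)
The paper offers no proof of this lemma, quoting it directly from \cite{dJS06}, and your argument is the standard derivation used there: the blow-up tangent sequence $0\to T_X\to f^*T_Y\to j_*\mathcal{Q}\to 0$ with $\mathcal{Q}=T_{E/Z}\otimes\mathcal{O}_E(E)$ the universal quotient bundle of rank $c-1$, the computation $\cn_1(\mathcal{Q})=\pi^*\cn_1(\mathcal{N}_{Z/Y})+h$, and Grothendieck--Riemann--Roch for the divisorial embedding $j$ with $\operatorname{td}(\mathcal{O}_E(-1))^{-1}=1+\tfrac12 h+\cdots$. All conventions are handled correctly, and your $\F_1$ sanity check confirms the signs, so the proof is complete and correct.
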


\begin{Cor} \label{lem:blowdown_c2}
In the setting of Lemma \ref{Lem:ch_blowup}, assume that $c= 2$. Let $S \subset Y$ be a surface that intersects $Z$ at most transversely at $k\geq 0$ points, and let $S_X\subset X$ be its strict transform. Then
$$\chn_2(X) \cdot S_X =
\chn_2(Y) \cdot S - \frac{3}{2} \cdot k.$$
\end{Cor}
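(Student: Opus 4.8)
The plan is to intersect the identity from \cref{Lem:ch_blowup}, specialized to $c = 2$ so that $\tfrac{c+1}{2} = \tfrac32$,
\[
\chn_2(X) \;=\; f^*\chn_2(Y) \;+\; \tfrac32\, E^2 \;-\; j_*\pi^*\cn_1(\mathcal{N}_{Z/Y}),
\]
with the cycle class $[S_X]$, and to evaluate the three resulting numbers one at a time using the projection formula together with an explicit local model for the strict transform.

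First I would record the geometry of $S_X$. Fix the $k$ points $p_1,\dots,p_k$ of $S\cap Z$; by hypothesis $S$ is smooth at each $p_i$ and $T_{p_i}S\oplus T_{p_i}Z = T_{p_i}Y$ (a direct sum, since $\dim S + \dim Z = \dim Y$). Working in local coordinates near $p_i$ in which $Z = \{y_1 = y_2 = 0\}$ and $S = \{y_3 = \dots = y_n = 0\}$, a direct computation in $X = \Bl_Z Y$ shows that $f|_{S_X}\colon S_X\to S$ is the blowup of $S$ at $p_1,\dots,p_k$; in particular $S_X$ is smooth along $E$, $f_*[S_X] = [S]$, and $S_X\cap E = \ell_1\sqcup\dots\sqcup\ell_k$, where $\ell_i$ is the fiber of $\pi\colon E = \PP(\mathcal{N}_{Z/Y})\to Z$ over $p_i$ — a $\PP^1$, as $\codim_Y Z = 2$ — with $\ell_i^2 = -1$ on $S_X$, and $S_X$ meets $E$ transversely along each $\ell_i$. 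Consequently $[S_X]\cdot[E] = \sum_i[\ell_i]$ and $j^*[S_X] = \sum_i[\ell_i]$ in $E$.

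Then I would evaluate the three terms as follows. By the projection formula, $f^*\chn_2(Y)\cdot S_X = \chn_2(Y)\cdot f_*[S_X] = \chn_2(Y)\cdot S$. For the second term, restricting $\mathcal{O}_X(E)$ to $S_X$ gives $\mathcal{O}_{S_X}\big(\sum_i\ell_i\big)$, hence $E^2\cdot S_X = \big(\sum_i\ell_i\big)^2_{S_X} = \sum_i\ell_i^2 = -k$ (the $\ell_i$ being pairwise disjoint); equivalently one uses $\mathcal{O}_X(E)|_E\cong\mathcal{O}_{\PP(\mathcal{N}_{Z/Y})}(-1)$ together with the fact that each $\ell_i$ is a line in a fiber. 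For the third term, again by the projection formula, $j_*\pi^*\cn_1(\mathcal{N}_{Z/Y})\cdot S_X = \pi^*\cn_1(\mathcal{N}_{Z/Y})\cdot j^*[S_X] = \sum_i\pi^*\cn_1(\mathcal{N}_{Z/Y})\cdot\ell_i = 0$, since each $\ell_i$ is contracted by $\pi$, so $\pi_*[\ell_i] = 0$. Substituting,
\[
\chn_2(X)\cdot S_X \;=\; \chn_2(Y)\cdot S \;+\; \tfrac32(-k) \;-\; 0 \;=\; \chn_2(Y)\cdot S \;-\; \tfrac32\,k .
\]

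The one point that needs genuine care is the local description of $S_X$: namely that $f|_{S_X}$ is the blowup of $S$ at the $k$ transverse points, that the exceptional curves $\ell_i$ are precisely the fibers of $E\to Z$ over those points, and that $\ell_i^2 = -1$ on $S_X$. Once this is in place, everything else is a formal manipulation with the projection formula, so I expect this local analysis to be the only (and minor) obstacle.
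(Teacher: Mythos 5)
Your proposal is correct and follows essentially the same route as the paper's proof: intersect the formula of Lemma \ref{Lem:ch_blowup} with $S_X$, use that $S_X$ is the blowup of $S$ at the $k$ transverse points so that $E|_{S_X}=\sum_i e_i$ with $e_i^2=-1$, and observe that the last term vanishes because the $e_i$ are contracted by $\pi$. The only difference is that you spell out the local-model justification for the structure of $S_X$, which the paper asserts without proof.
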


\begin{proof}
Write $S \cap Z = \{p_1,\dots, p_k\}$. Then $S_X$ is isomorphic to the blowup of $S$ at $p_1,\dots, p_k$, $S_X \cap E= \cup_{i=1}^{k}e_i$, where $e_i \simeq \PS^1$ is the exceptional curve over $p_i$, and $(e_i^2)_{S_X}=-1$.
By \cref{Lem:ch_blowup}, 
\begin{align*}
    \chn_2(X) \cdot S_X 
    & = f^* \chn_2 (Y) \cdot S_X + \frac{c+1}{2} E^2 \cdot S_X - j_*\pi^* \cn_1(\mathcal{N}_{Z/Y}) \cdot S_X \\
    & = \chn_2 (Y) \cdot  S + \frac{3}{2} (E_{|S_X})^2  - \pi^* \cn_1(\mathcal{N}_{Z/Y}) \cdot {S_X}_{|E} \\
    & = \chn_2 (Y) \cdot  S + \frac{3}{2} \sum_{i=1}^{k}(e_i)^2  - \pi^* \cn_1(\mathcal{N}_{Z/Y}) \cdot \sum_{i=1}^{k} e_i \\
    & = \ch_2 (Y) \cdot  S - \frac{3}{2}k - \sum_{i=1}^{k} \cancel{\cn_1(\mathcal{N}_{Z/Y}) \cdot  p_i}.
\end{align*} 
\end{proof}

We are ready to prove \cref{thm:m=1_not_2-Fano}: a toric Fano manifold $X$ with $\minRC(X)=1$ is not $2$-Fano.

\begin{proof}[{Proof of \cref{thm:m=1_not_2-Fano}}]
Let $X$ be a toric Fano manifold with $\minRC(X)=1$, and fix a centrally symmetric primitive relation $r(P) \colon x + (-x) = 0$.
Let $f \colon X \rightarrow Y$ be as in \cref{prop:reduction codim 2}, and $\pi \colon U=Y \setminus V(\mathcal{E}_{P_Y}) \rightarrow W$ the $\PS^1$-bundle structure induced by $r(P_Y) \colon x+ (-x)=0$ (see \cref{prop:Pk bundle}). By \cref{eq:blowup loci}, $Z \coloneqq f(\mathrm{Exc}(f))$ is empty or has codimension $2$ in $Y$. Let $S \subset Y$ be the surface given by \cref{lem:construction S in Y}. Then $S$ and $Z$ meet transversely in at most finitely many points. It follows from \cref{thm:S.chY} that $\ch_2(Y) \cdot S=0$. Let $S_X$ be the strict transform of $S$ in $X$. By \cref{lem:blowdown_c2}, $\ch_2(X) \cdot S_X\leq \ch_2(Y) \cdot S = 0$, and so $X$ is not $2$-Fano.
\end{proof}

\section{Proof of \cref{theorem: P n-2 implies rho<=3}}
\label{sec:P n-2 implies rho<=3}

In this section, we work in the setting of \cref{theorem: P n-2 implies rho<=3}:
\begin{enumerate}[label=(\Roman*)]
\assumption %now \theassumption=1
    \item\assumption \label{item: assumption on X, dim X}
    $X$ is a Fano manifold of dimension $n \geq 6$ with fan $\Sigma= \Sigma_X$ in  $N_\Q$;
    \item\assumption \label{item: m >= 3}
    $\minRC(X) \geq 3$;
    \item\assumption \label{item: CSPR of length n-1}
    $r(P)\colon x_0 + \cdots + x_{n-2} = 0$ is a centrally symmetric primitive relation in $\Sigma$.
\end{enumerate}
The equality $\minRC(X) = n-2$, as well as uniqueness of the centrally symmetric primitive collection, follows immediately from \cref{lemma: not too many CSPR}.
Now the goal is to prove that $\rho(X) \leq 3$, and this will follow from
\cref{proposition: (a) x+y+z = xi+xj}, \cref{proposition: (b) x+y+z = v} and \cref{prop: none PC}. By \cref{rmk_rho<=3}, we conclude that the projective space $\PP^n$ is the only smooth $n$-dimensional toric $2$-Fano variety with $\minRC (X) \geq n-2$ (\cref{cor:2Fano}).
\medskip
 
Let $P \coloneqq \{ x_0 , \dots , x_{n-2} \}$ and set $\Gamma = \Span P \subset N_{\QQ}$. 
Consider the quotient
\[ \pi \colon N_{\QQ} \simeq \QQ^n \to \QQ^n / \Gamma \simeq \QQ^2 .\]
Since $X$ is complete, the support of $\Sigma$ is equal to $N_{\mathbb{Q}}$, and hence we can find generators 
\begin{enumerate}[start=\theassumption,label=(\Roman*)]
    \item\assumption \label{item: x y z not in S}
    $x, y, z \in G(\Sigma) \setminus P$, for which
    \item\assumption \label{item: 0 in convex hull of x y z}
    $0 \in \Conv (\pi(x), \pi(y), \pi(z))$.
\end{enumerate}

\begin{Lem}
\label{lemma: x y z not a cone}
    The nonnegative span $\langle x, y, z \rangle$ is not a cone in $\Sigma$. 
\end{Lem}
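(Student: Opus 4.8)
The plan is to argue by contradiction: suppose $\langle x, y, z \rangle \in \Sigma$. The key point is that $x, y, z$ are chosen (by \ref{item: x y z not in S} and \ref{item: 0 in convex hull of x y z}) so that their images under $\pi \colon N_\QQ \to N_\QQ / \Gamma \simeq \QQ^2$ have $0$ in their convex hull; that means there are nonnegative rationals $a, b, c$, not all zero, with $a\,\pi(x) + b\,\pi(y) + c\,\pi(z) = 0$ in $\QQ^2$. Clearing denominators we may take $a, b, c \in \ZZ_{\geq 0}$. Lifting back to $N_\QQ$, this says $a x + b y + c z \in \Gamma = \Span(x_0, \dots, x_{n-2})$, so we obtain a relation
\[ a x + b y + c z = \sum_{i=0}^{n-2} \lambda_i x_i \]
for some $\lambda_i \in \QQ$. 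Using the centrally symmetric relation $x_0 + \cdots + x_{n-2} = 0$ we may add a multiple of it to make all $\lambda_i \geq 0$, and in fact (subtracting off the minimum) arrange that at least one $\lambda_i$, say $\lambda_0$, equals $0$. So we get an integral relation $a x + b y + c z = \sum_{i=1}^{n-2} \lambda_i x_i$ with all coefficients nonnegative.

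Now I would invoke \cref{prop:effective classes}: since $\langle x, y, z\rangle$ is assumed to be a cone of $\Sigma$, this relation defines an effective $1$-cycle class $\gamma$ on $X$, with $-K_X \cdot \gamma = (a + b + c) - \sum_{i=1}^{n-2}\lambda_i$. The strategy is to extract from $\gamma$ either a primitive relation of degree $\leq 0$ (contradicting Fano-ness) or a centrally symmetric primitive collection of order $\leq n-2$ (contradicting \ref{item: m >= 3} and the already-established uniqueness/order-$n-1$ statement via \cref{lemma: not too many CSPR}) or a primitive collection of order $2$ that is incompatible with $\minRC(X) > 1$ via \cref{lemma: one true enemy} and the structural lemmas \cref{lemma: sum of enemies gives more enemies}, \cref{lemma: when is eff deg 1 PC}. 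Concretely: because $\langle x_1, \dots, x_{n-2}\rangle \subset \Gamma$ is a face of the cone $\sigma(P)$-type structure (indeed $P \setminus \{x_0\}$ spans a simplicial cone of $\Sigma$, as $P$ is a primitive collection), the right-hand side lies in a genuine cone of $\Sigma$; hence by \cref{prop:effective classes} the "reversed" reading also makes sense, and the class $\gamma$ is a nonnegative combination of primitive relations. Decomposing $\gamma$ into extremal primitive relations supported on $\{x, y, z\} \cup (P \setminus \{x_0\})$, and using that $x, y, z \notin P$, I expect to land on a primitive collection contained in $\{x, y, z\} \cup (P\setminus\{x_0\})$ of small order; pairing with the hypothesis $\minRC(X)\geq 3$ forces its order to be exactly $3$ or forces it to involve the $x_i$'s in a way that produces a shorter centrally symmetric relation or a degenerate (degree $\leq 0$) one.

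The main obstacle, and where care is needed, is controlling how the $\lambda_i x_i$ terms interact: a priori the effective class $\gamma$ could decompose with pieces that each individually look harmless, and one must rule out that $ax+by+cz$ "spreads out" over many $x_i$'s without producing any forbidden configuration. I would handle this by exploiting that $P\setminus\{x_i\}$ generates a cone of $\Sigma$ for every $i$ together with \cref{prop:extremal forms cones}: any primitive collection meeting both $\{x,y,z\}$ and $P$ must, when combined with a complementary face of $P$, still give a cone, which severely restricts the coefficients $\lambda_i$ (they are forced to be $0$ or $1$, and the collection can contain at most one $x_i$ — otherwise two elements of $P$ lie in a common cone with elements outside, contradicting that $P$ is primitive of order $n-1$ with the specific relation). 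Reducing to the case where at most one $x_i$ appears, the relation becomes $ax + by + cz = \lambda_j x_j$ or $ax+by+cz = 0$; in the latter case $\{x,y,z\}$ (or a subset) is a centrally symmetric primitive collection of order $\leq 3 < $ the forbidden orders, contradicting uniqueness via \cref{lemma: not too many CSPR} (or directly $\minRC(X)\le 2$, contradicting \ref{item: m >= 3}); in the former, degree considerations and \cref{lemma: when is eff deg 1 PC} / \cref{lemma: one true enemy} give the contradiction. This case analysis — pinning down that the $P$-part collapses to at most one vector — is the crux; the rest is bookkeeping with the lemmas already assembled in Section~\ref{section:background}.
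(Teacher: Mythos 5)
Your setup coincides with the paper's: assume $\langle x,y,z\rangle\in\Sigma$, use \ref{item: 0 in convex hull of x y z} to produce a nonnegative, not-all-zero triple $(a,b,c)$ with $v\coloneqq ax+by+cz\in\Gamma$, and add a multiple of $r(P)$ to write $v=\sum_i\lambda_i x_i$ with all $\lambda_i\ge 0$ and some $\lambda_j=0$. The paper finishes in one line from here, and it is precisely this line that your proposal misses: $v$ now lies in the two cones $\langle x,y,z\rangle$ and $\langle x_0,\dots,\check{x_j},\dots,x_{n-2}\rangle$ (the latter is a cone of $\Sigma$ because every proper subset of the primitive collection $P$ spans one), these cones have disjoint generator sets since $x,y,z\notin P$, and in a fan two cones meet along a common face --- here necessarily $\{0\}$. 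Since $(a,b,c)\neq(0,0,0)$ and $\langle x,y,z\rangle$ is strongly convex, $v\neq 0$, a contradiction. No Fano hypothesis, no Mori cone, and no primitive-relation decomposition are needed; the statement is purely about the fan axioms.

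Instead you route the relation through \cref{prop:effective classes} and attempt to decompose the resulting effective class into primitive relations, and this part has genuine gaps. First, a decomposition of an effective class into primitive relations need not be ``supported on'' the generators appearing in the defining relation, so restricting attention to primitive collections inside $\{x,y,z\}\cup(P\setminus\{x_0\})$ is unjustified. Second, the claim that a primitive collection containing two elements of $P$ together with outside vectors would ``contradict that $P$ is primitive'' is false: primitivity of $P$ only says that the full set $P$ spans no cone, while proper subsets of $P$ can perfectly well sit inside larger cones and larger primitive collections. Third, the endgame (``degree considerations \dots\ give the contradiction'', ``I expect to land on \dots'') is not carried out, and for general nonnegative rational $a,b,c,\lambda_j$ it is not clear it can be. The one salvageable observation is your remark that the \emph{reversed} relation is also effective by \cref{prop:effective classes} (since $\langle x,y,z\rangle$ is assumed to be a cone): had you concluded from $\gamma,-\gamma\in\NE(X)$ and projectivity of $X$ that $\gamma=0$ in $A_1(X)\subset\ZZ^{G(\Sigma)}$, hence that all coefficients vanish, you would have obtained a correct, though heavier, proof. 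As written, the argument is incomplete.
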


\begin{proof}
    We will argue by contradiction and assume that $\langle x, y, z \rangle \in \Sigma$.
    By \ref{item: 0 in convex hull of x y z}, we can find a nonnegative triple of constants $(c_1,c_2,c_3)$ such that
    $c_1 \pi(x) + c_2 \pi(y) + c_3 \pi(z) = 0$,
    or in other words
    \[ v \coloneqq c_1 x + c_2 y + c_3 z = a_0 x_0 + \cdots + a_{n-2} x_{n-2} 
    \]
    for some constants $a_i$.
    Note that by adding some multiple of $r(P)$ \ref{item: CSPR of length n-1} to the right hand side, we can assume the $a_i$'s are nonnegative and such that at least one of them, say $a_j$, is $0$. It follows that $v$ lies in two cones of $\Sigma$, namely
    \[v \in \langle x,y,z \rangle \cap \langle x_0, \dots, \check{x_j}, \dots,x_{n-2}\rangle,
    \]
    which is impossible since $\{x,y,z\} \cap S=\varnothing$.
\end{proof}

Since $\{ x, y, z \}$ does not span a cone, we conclude that 
\begin{enumerate}
    \item either it is a primitive collection,
    \item or two of these vectors do not form a cone.
\end{enumerate}
The former case is the more technical one, and we start with it in \cref{subsection: x y z form a PC}. After we are done analyzing it, we can assume that none of the triples $\{ x, y, z \}$ as in \ref{item: x y z not in S} and \ref{item: 0 in convex hull of x y z} form a primitive collection, and this case will be treated in \cref{subsection: none of x y z form a PC}.

\subsection{\texorpdfstring{First case: $\{ x, y, z \}$}{\{x, y, z\}} is a primitive collection}
\label{subsection: x y z form a PC}

We recall that if $X$ is a projective toric Fano manifold and $\minRC (X)>1$, then any $x \in G(\Sigma)$ has at most one opponent by \cref{lemma: one true enemy}, where the opponent of $x$ is an element $x' \in G(\Sigma)$ such that $\{x,x'\} \in \PC(X)$. When we write $\{x, x'\}$, we mean either the set of two elements if $x'$ exists, or the singleton $\{x\}$ if $x'$ does not exist.

\begin{Rem}
\label{remark: plane geometry}
In the setting \ref{item: assumption on X, dim X}---\ref{item: 0 in convex hull of x y z},
pick a generator $u \in G(\Sigma)$. Then \ref{item: 0 in convex hull of x y z}  and plane geometry imply that the convex hull of $\pi(u)$ together with two of the vectors $\pi(x)$, $\pi(y)$, $\pi(z)$ contains $0$.
\end{Rem}

\begin{Lem}
\label{lemma: PR of x y z}
In the setting \ref{item: assumption on X, dim X}---\ref{item: 0 in convex hull of x y z},
assume in addition that $Q \coloneqq \{x, y, z\}$ is a primitive collection. Then the corresponding primitive relation is
\begin{itemize}
    \item either $r(Q) \colon \, x+y+z = x_i+x_j$ for possibly equal $x_i, x_j \in P$,
    \item or $r(Q) \colon \,x+y+z=v$ for some $v \in G(\Sigma)$.
\end{itemize}
\end{Lem}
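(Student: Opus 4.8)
The plan is a short degree count, after which the only case needing a genuine argument is $\deg(Q)=1$. I would first write $r(Q)\colon x+y+z=\mu_1w_1+\dots+\mu_sw_s$ with $\sigma(Q)=\langle w_1,\dots,w_s\rangle\in\Sigma$ and every $\mu_j>0$ (by minimality of $\sigma(Q)$). Since $X$ is Fano, $\deg(Q)=3-\sum_j\mu_j>0$; and $\deg(Q)=3$ would force $\sigma(Q)=\{0\}$, making $Q$ a centrally symmetric primitive collection of order $3$ and contradicting $\minRC(X)\ge 3$. Hence $\sum_j\mu_j\in\{1,2\}$. If $\sum_j\mu_j=1$, then $r(Q)\colon x+y+z=v$ with $v=w_1\in G(\Sigma)$, the second alternative. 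So I would be left with $\sum_j\mu_j=2$, where necessarily $r(Q)\colon x+y+z=w_1+w_2$ for $w_1,w_2\in G(\Sigma)$ (possibly $w_1=w_2$, in which case $\sigma(Q)=\langle w_1\rangle$), and the point is to show $w_1,w_2\in P$.

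Since $\deg(Q)=1$, the class $r(Q)$ is extremal by \cref{prop:deg1 extremal}, so I would apply \cref{prop:extremal forms cones} to $r(Q)$ with the \emph{trivial} cone $\tau=\{0\}$: its generator set is empty, so the hypotheses $G(\tau)\cap Q=\varnothing$, $G(\tau)\cap G(\sigma(Q))=\varnothing$, and $\langle\sigma(Q),\tau\rangle=\sigma(Q)\in\Sigma$ hold automatically, and the conclusion gives $\langle\{x,y,z\}\setminus\{q\},w_1,w_2\rangle\in\Sigma$ for every $q\in\{x,y,z\}$; passing to faces, $\langle a,b,w_i\rangle\in\Sigma$ for each pair $\{a,b\}\subset\{x,y,z\}$ and each $i$. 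Now suppose some $w_i\notin P$. Since $Q\cap\sigma(Q)=\varnothing$ by \cite[Proposition 3.1]{Bat91}, also $w_i\notin\{x,y,z\}$, so $w_i\in G(\Sigma)\setminus P$; by \cref{remark: plane geometry} applied to $u=w_i$ there is a pair $\{a,b\}\subset\{x,y,z\}$ with $0\in\Conv(\pi(w_i),\pi(a),\pi(b))$. Then $\{w_i,a,b\}$ is a triple of generators outside $P$ with $0$ in the convex hull of its $\pi$-images, so the same argument as in the proof of \cref{lemma: x y z not a cone} shows $\langle w_i,a,b\rangle\notin\Sigma$---contradicting the previous sentence. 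Therefore $w_1,w_2\in P$, and $r(Q)\colon x+y+z=x_i+x_j$ with $x_i,x_j\in P$ (equal when $w_1=w_2$), the first alternative.

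The only non-routine point is noticing that \cref{prop:extremal forms cones}, fed the trivial cone $\tau=\{0\}$, manufactures precisely the cones $\langle a,b,w_i\rangle$ that are forbidden by the ``$0$ in the convex hull implies not a cone'' mechanism of \cref{lemma: x y z not a cone}; everything else is bookkeeping, in particular the degenerate subcase $w_1=w_2$. I would also record at the outset the elementary observation that $G(\Sigma)\cap\Span P=P$---a generator lying in $\Span P$ lands, after adding a suitable multiple of $r(P)$, inside one of the smooth simplicial cones $\langle x_0,\dots,\widehat{x_j},\dots,x_{n-2}\rangle$, hence is one of its rays---so that $\pi(x),\pi(y),\pi(z)\neq 0$ and \cref{remark: plane geometry} genuinely applies.
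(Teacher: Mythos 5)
Your proof is correct and follows essentially the same route as the paper's: a degree count ruling out $\deg(Q)=3$ via $\minRC(X)\geq 3$, then in the degree-one case extremality plus \cref{prop:extremal forms cones} with $\tau=\{0\}$ to produce the cones $\langle a,b,w_i\rangle$, which \cref{remark: plane geometry} and the mechanism of \cref{lemma: x y z not a cone} forbid unless $w_i\in P$. Your added observations (the degenerate subcase $w_1=w_2$, and that $G(\Sigma)\cap\Span P=P$) are harmless extra care rather than a different argument.
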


\begin{proof}
Since $X$ is Fano \ref{item: assumption on X, dim X}, the degree of the primitive relation $x+y+z = A$ is positive, so we can have only three possibilities for $A$. The first one with $A=0$ is actually not possible by our assumption \ref{item: m >= 3}. The second is $A=v$, and we cannot say much about $v$ at the moment. The last possibility is
\[r(Q)\colon\, x+y+z = u+v
\]
for some, possibly equal, $u,v \in G(\Sigma)$. 
By \cref{prop:deg1 extremal}, this is an extremal primitive relation, so by \cref{prop:extremal forms cones} applied to $r(Q)$ and $\tau=\{0\}$, we get that $\langle u,x,y\rangle$, $\langle u,y,z\rangle$, $\langle u,x,z\rangle \in \Sigma$.
By \cref{remark: plane geometry}, we may assume without loss of generality that $0 \in \Conv(\pi(u),\pi(x),\pi(y))$,
hence by \cref{lemma: x y z not a cone}, we get $u \in P$. The same argument applies to conclude $v\in P$.
\end{proof}

Hence we have two cases to consider: when 
$\deg(Q)$ is $1$ and when it is $2$.

\subsubsection{Degree one}

In this case, by \cref{lemma: PR of x y z}, we have \begin{enumerate}[start=\theassumption,label=(\Roman*)]
    \item\assumption \label{item: (a) PR of x y z is deg 1}
    a primitive relation $r(Q) \colon\, x+y+z = x_i + x_j$ for possibly equal $x_i,x_j \in P$.
\end{enumerate}

\begin{Lem}
\label{lemma: (a.1)}
In the setting \ref{item: assumption on X, dim X}---\ref{item: (a) PR of x y z is deg 1}, assume in addition that $G(\Sigma)$ is contained in $P \cup \{ x, y, z, x', y', z' \}$. Then $x'$, $y'$, $z'$ do not exist. Consequently, $\rho (X) \leq 2$.
\end{Lem}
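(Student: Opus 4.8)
The plan is to argue by contradiction: suppose one of the opponents, say $x'$, exists. The strategy is to use the centrally symmetric primitive relation $r(P)$ together with the extremal relation $r(Q)\colon x+y+z = x_i+x_j$ and the completeness of $\Sigma$ to produce a second centrally symmetric primitive collection, contradicting the uniqueness established via \cref{lemma: not too many CSPR}. Concretely, since $\{x,x'\}\in\PC(X)$, \cref{lemma: one true enemy} gives a primitive relation $r(\{x,x'\})\colon x+x' = w$ for some $w\in G(\Sigma)$, or else $x+x'=0$, but the latter would force $\minRC(X)=1$, contradicting \ref{item: m >= 3}. So we get $x+x'=w$ with $w\in G(\Sigma)$, and by hypothesis $w\in P\cup\{x,y,z,x',y',z'\}$.

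First I would pin down which element $w$ can be. It cannot be $x$ or $x'$ (degree reasons). If $w\in P$, say $w=x_0$, then combining with $r(P)$ we get $x+x' = x_0 = -(x_1+\cdots+x_{n-2})$, i.e. $x+x'+x_1+\cdots+x_{n-2}=0$; after checking (using \cref{prop:extremal forms cones} applied to $r(\{x,x'\})$, or a direct face argument as in \cref{lemma: x y z not a cone}) that this set is in fact a primitive collection, this is a centrally symmetric primitive collection distinct from $P$ (it contains $x\notin P$), contradicting uniqueness. The remaining possibilities are $w\in\{y,z,y',z'\}$. Here I would use \cref{remark: plane geometry}: since $0\in\Conv(\pi(x),\pi(y),\pi(z))$ and the images $\pi(x),\pi(y),\pi(z)$ control the plane geometry, I can run the whole triple-analysis of \cref{lemma: PR of x y z} on a suitable new triple (for instance $\{x',y,z\}$ or $\{x',x,\cdot\}$, using that $0$ lies in the convex hull of $\pi(x')$ with two of the others once we know the relation between $\pi(x)$ and $\pi(x')$), deriving a contradiction with \cref{lemma: x y z not a cone} or producing a second centrally symmetric collection. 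By the symmetry of the hypotheses in $x,y,z$, the same argument rules out $y'$ and $z'$.

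Once $x',y',z'$ are all shown not to exist, we have $G(\Sigma)\subseteq P\cup\{x,y,z\}$, so $\#G(\Sigma)\le (n-1)+3 = n+2$, hence $\rho(X) = \#G(\Sigma)-n \le 2$, which is the claim.

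The main obstacle I anticipate is the bookkeeping in the sub-cases $w\in\{y,z,y',z'\}$: there one must carefully track several interlocking primitive relations ($r(P)$, $r(Q)$, $r(\{x,x'\})$, and whatever opponent relations $y',z'$ carry) and repeatedly invoke the dichotomy ``either a set spans a cone, or it contains a primitive collection,'' while keeping the plane-geometry input of \cref{remark: plane geometry} aligned with the combinatorics. The key simplification is that every primitive relation of degree $1$ is extremal (\cref{prop:deg1 extremal}), so \cref{prop:extremal forms cones} is available throughout to promote small cones to larger ones and force the offending generators back into $P$, exactly as in the proof of \cref{lemma: PR of x y z}.
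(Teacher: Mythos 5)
Your overall shape (argue by contradiction, write $x+x'=w$, and rule out the possible values of $w$) matches the paper's, but two of your steps do not work as described. First, in the sub-case $w=x_0\in P$: the set $\{x,x',x_1,\dots,x_{n-2}\}$ satisfying $x+x'+x_1+\cdots+x_{n-2}=0$ is \emph{never} a primitive collection, because it properly contains the primitive collection $\{x,x'\}$ (every proper subset of a primitive collection must span a cone, and $\langle x,x'\rangle\notin\Sigma$). The relation itself is merely an effective class of degree $n>0$, which is harmless on a Fano variety, so no contradiction with \cref{lemma: not too many CSPR} arises and this sub-case is not closed. The paper instead substitutes $x=w-x'$ into $r(Q)$ to obtain $w+y+z=x_i+x_j+x'$, and applies \cref{prop:extremal forms cones} to the extremal relation $r(Q)$ with $\tau=\langle w\rangle$ to show that $\langle w,y,z\rangle\in\Sigma$ whenever $w\in P$ (and likewise, directly, when $w\in\{y,z\}$); this turns the displayed relation into a non-trivial effective class of degree $0$, contradicting the Fano hypothesis.

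Second, the cases $w\in\{y,z,y',z'\}$ are where the real content lies, and ``run the whole triple-analysis on a suitable new triple'' is not an argument: you have not verified that $0\in\Conv(\pi(x'),\pi(y),\pi(z))$ (it need not hold), nor produced the contradiction. The paper's actual mechanism is the following chain. The substitution above eliminates $w\in\{y,z\}$ as well, so without loss of generality $x+x'=y'$; then $y'$ exists, and the identical analysis applied to $y'$ gives $y+y'\in\{x',z'\}$, where $y+y'=x'$ together with $x+x'=y'$ forces $x+y=0$, contradicting \ref{item: m >= 3}; hence $y+y'=z'$. Iterating once more gives $z+z'=x'$, and summing the three relations yields $x+y+z=0$, contradicting $r(Q)$ having degree $1$. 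Your final step, counting generators to conclude $\rho(X)\leq 2$, is fine.
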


\begin{proof}
By contradiction, assume that $x'\in G(\Sigma)$ exists, so $\{x,x'\}$ is a primitive collection, and let $x+x'=\alpha$ be the corresponding primitive relation. Clearly $\alpha \neq x$,~$x'$. The relation $r(Q)$ \ref{item: (a) PR of x y z is deg 1} gives 
\[ \alpha+y+z=x_i+x_j+x' , \]
which shows $\alpha \not \in P \cup \{y,z\}$ since otherwise the left hand side (LHS) forms a cone and we get an effective class of degree zero. Indeed, if $\alpha \in \{y,z\}$ it is clear that the LHS would form a cone; if $\alpha \in P$ applying \cref{prop:extremal forms cones} to $r(Q)$ and $\tau=\langle \alpha \rangle$ would follow that the LHS is a cone.

So without loss of generality, we can assume $x+x'=y'$. Applying the same argument to $y'$, we get that 
$y+y'=x'$ or $y+y'=z'$. The former would imply
$x+y=0$, which is not possible by \ref{item: m >= 3}, so $y+y'=z'$. Again, applying the same argument to $z'$, we get 
$z+z'=x'$. Summing the three primitive relations, we obtain $x+y+z=0$, which contradicts \ref{item: (a) PR of x y z is deg 1}.
\end{proof}

This leaves us with the case when
\begin{enumerate}[start=\theassumption,label=(\Roman*)]
    \item\assumption \label{item: (a.2) new vector u}
    there exists $u \in G(\Sigma) \setminus (P \cup \{ x,y,z, x',y',z' \})$.
\end{enumerate}
By \cref{remark: plane geometry}, we can assume without loss of generality that
\begin{enumerate}[start=\theassumption,label=(\Roman*)]
    \item\assumption \label{item: 0 in convex hull of x y u}
    $0 \in \Conv (\pi(x), \pi(y), \pi(u))$.
\end{enumerate}

\begin{Lem}
\label{lemma: PR of second PC of order 3}
Assume \ref{item: assumption on X, dim X}---\ref{item: 0 in convex hull of x y u}, then $R \coloneqq \{x,y,u\}$ is a primitive collection with 
primitive relation $r(R) \colon \,x+y+u = v$ for some $v\in G(\Sigma)$.
\end{Lem}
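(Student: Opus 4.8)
The plan is to follow the pattern of the proof of \cref{lemma: PR of x y z}, now applied to the triple $R = \{x,y,u\}$ in place of $Q = \{x,y,z\}$, while using the extra information we have accumulated about the configuration. First I would observe that $R$ is indeed a primitive collection: by \ref{item: 0 in convex hull of x y u} and \cref{lemma: x y z not a cone} (whose proof uses only that $x,y,u \in G(\Sigma)\setminus P$ and $0\in\Conv(\pi(x),\pi(y),\pi(u))$), the cone $\langle x,y,u\rangle$ is not in $\Sigma$; and since $u$ is not the opponent of $x$ or of $y$ (as $u \notin \{x',y'\}$ by \ref{item: (a.2) new vector u}), no two of $x,y,u$ fail to span a cone, so $\{x,y,u\}$ must itself be a primitive collection.

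Next I would determine $r(R)$. Since $X$ is Fano \ref{item: assumption on X, dim X} and $\minRC(X)\geq 3$ \ref{item: m >= 3}, the degree of $r(R)$ is $1$ or $2$, so $r(R)\colon x+y+u = A$ with $A = v$ for a single $v\in G(\Sigma)$, or $A = v_1+v_2$ for possibly equal $v_1,v_2\in G(\Sigma)$. In the degree-$2$ case, $r(R)$ is extremal by \cref{prop:deg1 extremal}, so applying \cref{prop:extremal forms cones} to $r(R)$ with $\tau=\{0\}$ gives $\langle v_1,x,y\rangle,\langle v_1,y,u\rangle,\langle v_1,x,u\rangle\in\Sigma$ (and likewise for $v_2$). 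By \cref{remark: plane geometry}, $0$ lies in the convex hull of $\pi(v_1)$ together with two of $\pi(x),\pi(y),\pi(u)$; combined with \cref{lemma: x y z not a cone} this forces $v_1\in P$, and similarly $v_2\in P$. So in the degree-$2$ case we would get a primitive relation $x+y+u = x_p + x_q$ with $x_p,x_q\in P$. The task is then to rule this out, leaving only the degree-$1$ conclusion $r(R)\colon x+y+u=v$.

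To exclude $x+y+u=x_p+x_q$, I would combine it with the relation $r(Q)\colon x+y+z=x_i+x_j$ from \ref{item: (a) PR of x y z is deg 1}. Subtracting gives $u - z = x_p + x_q - x_i - x_j$, i.e. $u + x_i + x_j = z + x_p + x_q$ (after moving negative terms). Since $P$ is a simplex away from its own primitive collection, suitable subsets of $\{x_i,x_j,x_p,x_q\}$ span cones, and I expect to be able to apply \cref{prop:effective classes} (or \cref{lemma: when is eff deg 1 PC}) to the resulting relation to force a degeneration: either $u=z$, contradicting \ref{item: (a.2) new vector u} (as $u\notin\{x,y,z\}$), or the multiset identity $\{u,x_i,x_j\}=\{z,x_p,x_q\}$ collapses in a way that again identifies $u$ with an already-named vector or produces a degree-$0$ relation, contradicting Fano. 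The main obstacle will be handling the bookkeeping of the cases $x_i=x_j$, $x_p=x_q$, and overlaps among these indices, and making sure that at each branch the resulting relation genuinely has a cone on one side so that \cref{prop:effective classes} applies; this is where one must be careful, but it is routine case analysis rather than a conceptual difficulty.
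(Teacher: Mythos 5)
Your overall route is the paper's: you establish that $R=\{x,y,u\}$ is a primitive collection via \cref{lemma: x y z not a cone} together with $u\neq x',y'$, you reduce the shape of $r(R)$ to either $x+y+u=v$ or $x+y+u=x_p+x_q$ with $x_p,x_q\in P$ (the paper simply cites \cref{lemma: PR of x y z}, which applies verbatim to the triple $\{x,y,u\}$; note that what you call the ``degree-$2$ case'' is in fact the relation of degree $1$ --- that is why \cref{prop:deg1 extremal} yields extremality there), and you propose to kill the two-term case by combining it with $r(Q)$ from \ref{item: (a) PR of x y z is deg 1} to obtain $u+x_i+x_j=z+x_p+x_q$.

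The gap is in this last step, and it is exactly where the paper's proof has its content. To invoke \cref{prop:effective classes} you must know that $\langle z,x_p,x_q\rangle\in\Sigma$, and this is not a statement about subsets of $P$ (which is all that ``$P$ is a simplex away from its own primitive collection'' controls): it mixes $z$ with elements of $P$. The paper obtains it by applying \cref{prop:extremal forms cones} to the extremal relation $r(Q)\colon x+y+z=x_i+x_j$ with $\tau=\langle x_p,x_q\rangle$; the hypothesis of that proposition, namely $\langle x_i,x_j,x_p,x_q\rangle\in\Sigma$, holds because these are at most four elements of $P$ and $|P|=n-1\geq 5$ --- this is precisely where $n\geq 6$ from \ref{item: assumption on X, dim X} enters, so the verification cannot be dismissed as routine bookkeeping. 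Once $\langle z,x_p,x_q\rangle\in\Sigma$ is in hand, the endgame is also shorter than your ``$u=z$ or multiset collapse'' dichotomy: $u+x_i+x_j=z+x_p+x_q$ is an effective class of anticanonical degree $0$, nontrivial because $u\notin P\cup\{z\}$ by \ref{item: (a.2) new vector u}, which contradicts the Fano assumption outright. Please supply the cone verification; without it \cref{prop:effective classes} does not apply and the contradiction does not materialize.
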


\begin{proof}
By \cref{lemma: x y z not a cone}, we have $\langle x,y,u \rangle \notin \Sigma$, and since $u \neq x', y'$, we conclude that $\{x,y,u\}$ is a primitive collection.
By \cref{lemma: PR of x y z}, we can have either
$r(R)\colon\,x+y+u = x_k + x_l$ or $r(R)\colon\,x+y+u = v$. In the former case, 
combining $r(R)$ with $r(Q)$ \ref{item: (a) PR of x y z is deg 1} provides us with a relation
\[ u + x_i + x_j = z + x_k + x_l.
\]
By applying \cref{prop:extremal forms cones} to $r(Q)$ \ref{item: (a) PR of x y z is deg 1} and $\tau=\langle x_k, x_l \rangle$, we get $\langle z,x_k,x_l\rangle \in \Sigma$ (here we are using the assumption $\dim(X)=n \geq 6$ \ref{item: assumption on X, dim X}).
Hence, by \cref{prop:effective classes}, we get an effective curve class of degree $0$, contradicting the Fano assumption \ref{item: assumption on X, dim X}.
\end{proof}

We will write down the result of \cref{lemma: PR of second PC of order 3} as an additional assumption, remembering that it is implied by the previous assumptions:
\begin{enumerate}[start=\theassumption,label=(\Roman*)]
    \item\assumption \label{item: PR of x y u}
    We have a primitive relation $r(R) \colon \,x+y+u = v$ for some $v\in G(\Sigma)$.
\end{enumerate}

\begin{Lem}
\label{lemma: (a.2) first approx of Sigma(1)}
Assume \ref{item: assumption on X, dim X}---\ref{item: PR of x y u}, then
\[G(\Sigma) \subset P \cup \{x,y,z, u, x',y',u',v'\} .\]
In particular, $z' \in  P \cup \{x,y,z, u, x',y',u',v'\}$, and since $v \neq x,y,u,v'$, we have that $v\in P$ or $v\in \{z,x',y',u'\}$.
\end{Lem}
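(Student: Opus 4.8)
The plan is to show that every generator of $\Sigma$ lies in the explicit finite set $P \cup \{x,y,z,u,x',y',u',v'\}$, and the only new ingredient needed beyond the already-established relations is to control which generators can appear when we project to $\QQ^2 = N_\QQ/\Gamma$. First I would observe that, by \ref{item: PR of x y u} together with \cref{prop:deg1 extremal}, the relation $r(R)\colon x+y+u = v$ is extremal, so I can freely apply \cref{prop:extremal forms cones} to it. Combined with the extremal relation $r(Q)$ from \ref{item: (a) PR of x y z is deg 1}, this gives a supply of genuine cones $\langle \cdot,\cdot\rangle$, $\langle\cdot,\cdot,\cdot\rangle$ in $\Sigma$ that I will use to derive contradictions from unwanted relations of degree $\leq 0$.

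The core step is to take an arbitrary $w \in G(\Sigma) \setminus P$ and run the plane-geometry argument of \cref{remark: plane geometry}: since $0 \in \Conv(\pi(x),\pi(y),\pi(z))$, the point $0$ lies in the convex hull of $\pi(w)$ and two of $\pi(x),\pi(y),\pi(z)$ — and by symmetry, playing $w$ off against the triple $\{x,y,u\}$ as well, against some pair among these four. I would then argue that in each configuration $\{w, a, b\}$ (with $a,b$ among $x,y,z,u$) the set $\{w,a,b\}$ either spans a cone, or is a primitive collection, or $w$ is the opponent of $a$ or of $b$. The cone case is excluded by (a suitable instance of) \cref{lemma: x y z not a cone}; the primitive-collection case is pinned down by \cref{lemma: PR of x y z} to be of the form $w+a+b = x_k+x_l$ or $w+a+b = v''$, and one then combines with $r(Q)$ or $r(R)$ exactly as in the proof of \cref{lemma: PR of second PC of order 3} to get an effective class of degree $0$ — unless $v'' \in P$, or $w$ coincides with one of the already-listed vectors, or $w$ is an opponent $a'$ or $b'$. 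Carefully bookkeeping all the opponents that arise — $x', y'$ (of $x,y$), $u'$ (of $u$), and $v'$ (of $v$, which surfaces because $v$ appears on the right of $r(R)$) — yields $w \in P \cup \{x,y,z,u,x',y',u',v'\}$. For the "in particular" clause, apply this to $w = z'$ (if it exists), and note $z' \notin \{x,y,u\}$ trivially while $z' \neq v'$ and $z' \neq u'$ must be checked via the defining relations (e.g. $z+z' = \alpha$ forces constraints incompatible with $r(Q)$), leaving $z' \in P \cup \{z, x', y'\}$, hence $v \in P \cup \{z,x',y',u'\}$ after excluding $v \in \{x,y,u,v'\}$ by definition of an opponent and by \ref{item: PR of x y u}.

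The main obstacle I anticipate is the case analysis in the primitive-collection subcase: for each triple $\{w,a,b\}$ one must verify that the auxiliary cone needed to invoke \cref{prop:effective classes} (typically $\langle \text{remaining generator}, x_k, x_l\rangle$ or $\langle a, b\rangle$ with the opposite two $x_i$'s) genuinely lies in $\Sigma$, and this is where the hypothesis $n \geq 6$ \ref{item: assumption on X, dim X} is essential — it guarantees enough room among $x_0,\dots,x_{n-2}$ for \cref{prop:extremal forms cones} to produce the cone $\langle z, x_k, x_l\rangle$ (or its analogues) even after removing the indices used by $r(Q)$ and $r(R)$. I would organize this by first recording, once and for all, the list of 2- and 3-dimensional cones forced by \cref{prop:extremal forms cones} applied to $r(Q)$ and $r(R)$ with $\tau$ ranging over faces generated by the surplus $x_i$'s, and then checking each unwanted relation against that list. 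The remaining bookkeeping — eliminating $w = v'$ and $w = u'$ as spurious coincidences and confirming the opponents are distinct from one another — is routine given \cref{lemma: one true enemy} and \cref{lemma: sum of enemies gives more enemies}.
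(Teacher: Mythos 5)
Your plan coincides in structure with the paper's proof: take an arbitrary $w\in G(\Sigma)\setminus\bigl(P\cup\{x,y,z,u,x',y',u'\}\bigr)$, use \cref{remark: plane geometry} against the triple $\{x,y,u\}$ to produce a primitive collection $\{w,a,b\}$ with $a,b\in\{x,y,u\}$, rule out the degree-one outcome $w+a+b=x_k+x_l$ by exhibiting a nontrivial effective class of degree $0$ (this is indeed where $n\geq 6$ enters, via \cref{prop:extremal forms cones} applied to $r(Q)$ with $\tau=\langle x_k,x_l\rangle$), and in the degree-two outcome combine with $r(R)$ to force $w=v'$. Two points need fixing. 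First, your opening claim that $r(R)\colon x+y+u=v$ is extremal ``by \cref{prop:deg1 extremal}'' is false: $\deg(r(R))=3-1=2$, so that proposition does not apply, and extremality of $r(R)$ is never established. You should therefore not include $r(R)$ in your ``supply of cones'' from \cref{prop:extremal forms cones}; fortunately this is not needed, since every auxiliary cone required to invoke \cref{prop:effective classes} comes from the genuinely extremal (degree-one) relation $r(Q)$. Relatedly, the degree-two case needs no escape clause ``unless $v''\in P$'': whatever the single right-hand vector $b$ is, subtracting $r(R)$ gives $b+c=w+v$ with $c$ the remaining element of $\{x,y,u\}$, and $w\neq v'$ would yield a nontrivial effective class of degree $0$, so $w=v'$ in all cases. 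Second, the ``in particular'' clause is more trivial than your treatment suggests: $z'$ and $v$ lie in the displayed set simply because they are generators, and the refinement for $v$ uses only $v\notin\{x,y,u\}$ (since $v\in\sigma(R)$ and a primitive collection is disjoint from $\sigma$ of its relation) together with $v\neq v'$; your ``hence'' passing from a refined claim about $z'$ to the claim about $v$ is a non sequitur, although your final sentence recovers the correct justification.
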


\begin{proof}
Take any $w \in G(\Sigma) \setminus ( P \cup \{x,y,z, u, x',y',u'\})$.
By \cref{remark: plane geometry}, the convex hull of $\pi(w)$ together with two of $\pi(x)$, $\pi(y)$, $\pi(u)$ contains $0$, yielding an analog of \ref{item: 0 in convex hull of x y u}. By \cref{lemma: x y z not a cone} and from $w\neq x',y',u'$, it follows that one of 
$\{w,x,u\}$, $\{w,y,u\}$, $\{w,x,y\}$ is a primitive collection. 

We will prove that the corresponding primitive relation has the form
$w+x+u=b$ or $w+y+u=b$ or $w+x+y=b$ for some $b \in G(\Sigma)$. Assume for a contradiction that this is not the case.  By \cref{lemma: PR of x y z}, we have that 
one of
$w+x+u$, $w+y+u$, or $w+x+y$ equals $x_k+x_l$, for possibly equal $x_k, x_l\in P$. Hence
there exist $a,b\in G(\Sigma)$ (one of which is $w\neq z$) such that 
either $x+a+b$ or $y+a+b$ equals $x_k+x_l$. Assume $x+a+b=x_k+x_l$. 
Combining this with $r(Q)$ \ref{item: (a) PR of x y z is deg 1}, it follows that $y+z+x_k+x_l=a+b+x_i+x_j$. 
By applying \cref{prop:extremal forms cones} to $r(Q)$ \ref{item: (a) PR of x y z is deg 1} and $\tau=\langle x_k, x_l \rangle$, we get $\langle y,z,x_k,x_l\rangle \in \Sigma$ (here we are using the assumption $\dim(X)=n \geq 6$ \ref{item: assumption on X, dim X}). 
Hence, by \cref{prop:effective classes}, we get an effective curve class of degree $0$. The class is non-trivial since $w\neq y,z,x_k, x_l$. This contradicts the assumption that $X$ is Fano. 
The case $y+a+b=x_k+x_l$ is similar.

So we have a primitive relation of the form $w+x+u=b$ or $w+y+u=b$ or $w+x+y=b$ for some $b \in G(\Sigma)$. Combining this with $r(R)$ 
\ref{item: PR of x y u}, we get $b+y=w+v$ or $b+x=w+v$ or $b+u=w+v$. All possibilities imply that $w=v'$, as otherwise, by \cref{prop:effective classes}, we obtain an effective class of degree $0$ (non-trivial since $w,v\neq y,u$), which contradicts the Fano assumption \ref{item: assumption on X, dim X}.
\end{proof}

\begin{Lem}
\label{lemma: (a.2) second approx of Sigma(1)}
Assume \ref{item: assumption on X, dim X}---\ref{item: PR of x y u}.
Then $x$, $y$ don't have opponents,
\[ G(\Sigma) \subset P \cup \{x,y,z,u,u',v'\} , \]
and we have a trichotomy: $v \in P$ or $v = z$ or $v=u'$.
\end{Lem}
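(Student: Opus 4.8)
The plan is to upgrade \cref{lemma: (a.2) first approx of Sigma(1)} by eliminating the possibilities $x' \in G(\Sigma)$ and $y' \in G(\Sigma)$, which simultaneously removes $x'$, $y'$ from the list of generators and forces $v \notin \{x',y'\}$, leaving the trichotomy $v\in P$, $v=z$, $v=u'$. First I would suppose, for contradiction, that the opponent $x'$ exists, with primitive relation $x+x'=\alpha$ for some $\alpha \in G(\Sigma)$. Adding this to $r(Q)$ (assumption \ref{item: (a) PR of x y z is deg 1}), $x+y+z = x_i+x_j$, gives $\alpha + y + z = x' + x_i + x_j$. As in the proof of \cref{lemma: (a.1)}, applying \cref{prop:extremal forms cones} to the extremal relation $r(Q)$ with $\tau = \langle x_i, x_j\rangle$ (legitimate since $n\geq 6$) shows $\langle x', x_i, x_j\rangle \in \Sigma$, so by \cref{prop:effective classes} the relation $\alpha+y+z = x'+x_i+x_j$ would be an effective class of degree $0$ unless it is trivial; hence $\{\alpha, y, z\} = \{x', x_i, x_j\}$ as multisets. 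Since $\alpha \notin P$ (again by \cref{prop:extremal forms cones} applied to $r(Q)$ and $\tau=\langle\alpha\rangle$, were $\alpha\in P$ the LHS $\alpha+y+z$ would be a cone) and $\alpha\neq x,x'$, we would need $\alpha \in \{y,z\}$ — but then the LHS of $\alpha+y+z=x'+x_i+x_j$ is again a cone, contradiction. So $x'$ does not exist; by symmetry (swapping the roles of $x$ and $y$, which are interchangeable in \ref{item: PR of x y u}) $y'$ does not exist either.

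With $x'$ and $y'$ removed, \cref{lemma: (a.2) first approx of Sigma(1)} immediately gives $G(\Sigma) \subset P \cup \{x,y,z,u,u',v'\}$, and the same lemma's conclusion "$v\in P$ or $v\in\{z,x',y',u'\}$" collapses to "$v\in P$ or $v\in\{z,u'\}$" since $x'$, $y'$ no longer exist. This is exactly the asserted trichotomy.

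The main obstacle I anticipate is the bookkeeping around the opponent $\alpha$ of $x$: one must be careful that $\alpha$ is genuinely a \emph{new} generator not already appearing, so that the degree-$0$ effective class obtained is nontrivial, and one must check all sub-cases ($\alpha\in P$, $\alpha\in\{y,z\}$, $\alpha = u$, $\alpha=u'$, $\alpha=v'$) are excluded — the arguments are the degree-$0$/cone contradiction via \cref{prop:effective classes} and \cref{prop:extremal forms cones}, but they need to be run uniformly. A secondary subtlety is ensuring the symmetry between $x$ and $y$ is actually available: assumption \ref{item: PR of x y u} is symmetric in $x,y$, and \ref{item: 0 in convex hull of x y u} was chosen with $x,y$ playing symmetric roles, so the argument ruling out $x'$ transfers verbatim to $y'$ — but this should be stated rather than left implicit.
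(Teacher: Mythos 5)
Your high-level plan (rule out $x'$ and $y'$, then read off the inclusion and the trichotomy from \cref{lemma: (a.2) first approx of Sigma(1)}) is the paper's plan, and the final reduction step is fine. But the elimination of $x'$ has a genuine gap. Your contradiction rests entirely on the claim that $\langle x', x_i, x_j\rangle \in \Sigma$, so that \cref{prop:effective classes} applies to $\alpha+y+z = x'+x_i+x_j$ and forces the multiset equality $\{\alpha,y,z\}=\{x',x_i,x_j\}$. The justification you give — \cref{prop:extremal forms cones} applied to $r(Q)$ with $\tau=\langle x_i,x_j\rangle$ — is invalid twice over: that proposition requires $G(\tau)\cap G(\sigma(Q))=\varnothing$, whereas here $\tau$ \emph{is} $\sigma(Q)$, and even if it applied, its conclusion only produces cones of the form $\langle Q\setminus\{q\},\sigma(Q),\tau\rangle$, which never involve $x'$. (In \cref{lemma: (a.1)} and in \cref{lemma: degree 2 bound 1} the analogous cone statements are obtained either for the \emph{left}-hand side $\langle\alpha,y,z\rangle$ in the sub-cases $\alpha\in P\cup\{y,z\}$, or by applying \cref{prop:extremal forms cones} to the relation $x+x'=\alpha$ itself — which requires already knowing $\langle\alpha,x_i,x_j\rangle\in\Sigma$, i.e.\ knowing what $\alpha$ is.) Since $x'$ only pairwise forms cones with $x_i$ and $x_j$, the triple $\{x',x_i,x_j\}$ could a priori be a primitive collection of order $3$, and nothing you cite excludes this.

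What your first relation legitimately yields is only $\alpha\notin P\cup\{x,x',y,z\}$; by \cref{lemma: (a.2) first approx of Sigma(1)} this still leaves $\alpha\in\{y',u,u',v'\}$, and these cases are the actual content of the lemma. The paper disposes of them by also substituting $x=\alpha-x'$ into $r(R)$ \ref{item: PR of x y u} to get $\alpha+y+u=v+x'$, checking via \cref{lemma: when is eff deg 1 PC} that this is an extremal primitive relation (so $\{\alpha,y,u\}$ is a primitive collection, whence $\alpha\neq y',u,u'$), and then applying \cref{prop:extremal forms cones} to \emph{that} relation to get $\langle\alpha,v\rangle\in\Sigma$, whence $\alpha\neq v'$ — exhausting all possibilities. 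Your proposal never uses $r(R)$ or the vector $u$, which is precisely the new ingredient distinguishing this lemma from \cref{lemma: (a.1)}; as written, the argument does not close. (A minor additional slip: from your multiset equality you infer "we would need $\alpha\in\{y,z\}$", which does not follow — the equality would force $\alpha\in\{x',x_i,x_j\}$ and give an immediate contradiction — but this is moot given the main gap.)
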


\begin{proof}
We will only show that $x'$ does not exist, and the argument for $y'$ is symmetric. Suppose to the contrary that $x'\in G(\Sigma)$, and let $x+x' = \alpha$ be the corresponding primitive relation, so $\alpha \neq x, x'$. Then we can substitute $x = \alpha - x'$ into $r(Q)$ \ref{item: (a) PR of x y z is deg 1} to get 
\[ \alpha +y+z = x'+x_i+x_j ,
\]
which shows $\alpha \notin P \cup \{y,z\}$, as otherwise we would get an effective curve class of degree $0$.
Substituting $x = \alpha - x'$ into $r(R)$ \ref{item: PR of x y u} gives the relation 
\begin{equation} \label{equ:lemma degree 1}
\alpha+y+u=v+x'.    
\end{equation}

Note that $\langle v, x' \rangle \in \Sigma$ by \cref{lemma: one true enemy}.
We claim that \cref{equ:lemma degree 1} is an extremal primitive relation by \cref{lemma: when is eff deg 1 PC}. Assume not, then \cref{lemma: when is eff deg 1 PC} implies that one of $\{\alpha, y, u\}$ is in $\{v,x'\}$ and the remaining two vectors are opponents. Since $u\neq y'$, then either $\alpha, y$ are opponents and $u\in\{v,x'\}$, or $\alpha, u$ are opponents and $y\in\{v,x'\}$.
From \ref{item: PR of x y u}, we have $u \neq v$ and $y \neq v$; \ref{item: (a.2) new vector u} means $u\neq x'$; and \ref{item: (a) PR of x y z is deg 1} implies $y\neq x'$. So both cases are impossible.

So \cref{equ:lemma degree 1} is an extremal primitive relation. 
In particular, $\alpha\neq y',u,u'$. By \cref{prop:extremal forms cones},
$v$ forms a cone with $\alpha$, hence $\alpha\neq v'$, which contradicts \cref{lemma: (a.2) first approx of Sigma(1)}.
\end{proof}

\begin{Lem}
\label{lemma: (a.2) third approx of Sigma(1)}
Assume \ref{item: assumption on X, dim X}---\ref{item: PR of x y u}. Then we have
\[ G(\Sigma) \subset P \cup \{x,y,z,u,v'\} \]
and a dichotomy:
$v \in P$ or $v = z$.
If moreover $u'$ exists, we have $u+u'=z$, $v=x_i$ and $u = x_j'$.
\end{Lem}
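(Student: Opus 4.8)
The plan is to split according to whether the opponent $u'$ exists. If $u'$ does not exist, the statement is immediate from \cref{lemma: (a.2) second approx of Sigma(1)}: the containment $G(\Sigma)\subset P\cup\{x,y,z,u,u',v'\}$ now reads $G(\Sigma)\subset P\cup\{x,y,z,u,v'\}$, the trichotomy ``$v\in P$, $v=z$, or $v=u'$'' collapses to the desired dichotomy, and the final assertion is vacuous. So from now on assume that $u'$ exists.

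By \cref{lemma: one true enemy}, $\{u,u'\}$ is the unique primitive collection of order $2$ through $u$, with primitive relation $u+u'=w$ for a single $w\in G(\Sigma)$, extremal by \cref{prop:deg1 extremal}. Substituting $u=w-u'$ into $r(R)$ \ref{item: PR of x y u} gives $x+y+w=u'+v$. I claim $w\in P\cup\{z,v'\}$. First, $\langle x,y,w\rangle\notin\Sigma$, since otherwise \cref{prop:effective classes} would realize $u'+v=x+y+w$ as an effective class of degree $-1$, impossible on a Fano variety. Since $x$ and $y$ have no opponents (\cref{lemma: (a.2) second approx of Sigma(1)}), every pair among $x,y,w$ spans a cone; with $\langle x,y,w\rangle\notin\Sigma$ this forces $w\notin\{x,y\}$ and makes $\{x,y,w\}$ a primitive collection. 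Moreover $w\neq u,u'$ (else $u$ or $u'$ would vanish), and $w\neq v$ (otherwise the primitive relation of $\{x,y,v\}=\{x,y,w\}$ would be $x+y+v=u'+v$, so that $v$ lies both in the collection and among the generators of its minimal cone $\langle u',v\rangle$, which is absurd). Finally $\langle u',v\rangle\in\Sigma$, because the unique opponent of $u'$ is $u$ and $u\neq v$ (were $u=v$, then $r(R)$ would give $x+y=0$, against \ref{item: m >= 3}). Combining with \cref{lemma: (a.2) second approx of Sigma(1)} yields the claim.

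The heart of the argument is to exclude $w\in P$ and $w=v'$. If $w=x_k\in P$, subtract $r(Q)$ \ref{item: (a) PR of x y z is deg 1} from $x+y+x_k=u'+v$ to get $z+u'+v=x_i+x_j+x_k$; since at most three of the $x_\bullet$'s are involved and $|P|=n-1\geq5$ \ref{item: assumption on X, dim X}, the cone they span lies in $\Sigma$, so this is an effective class of degree $0$, hence trivial on a Fano variety, forcing the multiset equality $\{z,u',v\}=\{x_i,x_j,x_k\}$ and in particular $z\in P$, a contradiction. If $w=v'$, then $u+u'=v'$; let $v+v'=t\in G(\Sigma)$ be the (extremal, degree-$1$) primitive relation of $\{v,v'\}$ and substitute $v'=u+u'$ and $v=x+y+u$ to obtain $x+y+u+v'=t$. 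I then run through $t\in P\cup\{x,y,z,u,u',v'\}$: each of $t=x,y,u$ produces a relation $a+u+v'=0$ ($a\in\{x,y\}$) or $x+y+v'=0$, i.e.\ a centrally symmetric primitive collection of order $3$ (the relevant pairs being cones since $x,y$ have no opponents and $u'\neq v'$), so $\minRC(X)\leq2$, against \ref{item: m >= 3}; $t=u'$ gives $x+y+2u=0$, hence $v=-u$ by $r(R)$, so $\{u,v\}$ is centrally symmetric of order $2$ and $\minRC(X)=1$, against \ref{item: m >= 3}; $t=v'$ gives $x+y+u=0$, against $r(R)$; and the remaining cases $t=z$ and $t\in P$ are eliminated by combining $v+v'=t$, $u+u'=v'$, $r(Q)$ and \cref{prop:extremal forms cones} to force either a degree-$0$ effective class among disjoint generators (whence $z\in P$, impossible) or a forbidden centrally symmetric relation --- for instance, $t=z$ gives $2v+u'=x+y+z=x_i+x_j$, and, since $2v+u'$ lies in the relative interior of the regular cone $\langle u',v\rangle$, we get $\{x_i,x_j\}=\{u',v\}$, after which $v+v'=z$ together with $u+u'=v'$ and $r(Q)$ force $x+y+u=0$, contradicting $r(R)$. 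Therefore $w=z$, that is, $u+u'=z$.

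It remains to identify $v$ and $u'$. From $x+y+z=u'+v$ and $r(Q)\colon x+y+z=x_i+x_j$ we get $u'+v=x_i+x_j$. The minimal cone of $\Sigma$ containing $x+y+z$ is $\sigma(Q)=\langle x_i,x_j\rangle$, while $u'+v$ lies in the relative interior of $\langle u',v\rangle\in\Sigma$; hence these two cones coincide and $\{u',v\}=\{x_i,x_j\}$. Relabelling $i,j$ if needed, take $v=x_i$ and $u'=x_j$. Then $v=x_i\in P$ (giving the dichotomy), $u'=x_j\in P$ (so $G(\Sigma)\subset P\cup\{x,y,z,u,v'\}$), $u=x_j'$ (since $u'=x_j$ is by definition the opponent of $u$), and $u+u'=z$, which completes the argument. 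The main obstacle is the elimination of the case $w=v'$, and inside it the sub-cases $t=z$ and $t\in P$, where one must trace the opponent structure carefully through \cref{prop:extremal forms cones} and the Fano inequality; the remaining sub-cases, and the case $w\in P$, reduce at once to \ref{item: m >= 3}.
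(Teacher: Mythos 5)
Your overall architecture (split on existence of $u'$, substitute $u+u'=w$ into $r(R)$ to get $x+y+w=u'+v$, pin down $w$, then match $u'+v$ against $x_i+x_j$) matches the paper's, and several pieces are fine: the reduction when $u'$ does not exist, the proof that $\{x,y,w\}$ is a primitive collection, the exclusion of $w\in P$ via the degree-zero effective class $z+u'+v=x_i+x_j+x_k$, and the final identification $\{u',v\}=\{x_i,x_j\}$. The problem is your elimination of $w=v'$. There you replace a one-line argument by a case analysis on $t=v+v'$, and the sub-case $t\in P$ is only gestured at: you claim it yields ``a degree-$0$ effective class among disjoint generators (whence $z\in P$)'' or a centrally symmetric relation, but neither happens. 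For instance, with $t=x_k$ and $v=x_m\in P$ one gets $u+u'=x_k-x_m$, and the contradiction comes from rewriting $x_k-x_m=2x_k+\sum_{l\neq m,k}x_l$ via $r(P)$ to produce an effective class of degree $3-n<0$; with $v=z$ one is led to $3u+2u'=2x_k-x_i-x_j$ and an effective class of degree $6-n$, which for $n=6$ is zero and forces $u\in P$ rather than $z\in P$. These are genuinely different computations from the ones you describe, and you have not carried them out, so as written this sub-case is a gap.

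The gap is also avoidable, and the paper's proof shows how: since $x$ and $y$ have no opponents (\cref{lemma: (a.2) second approx of Sigma(1)}), \cref{lemma: when is eff deg 1 PC} (or your own direct argument) shows that $x+y+w=u'+v$ is the \emph{primitive relation} of $\{x,y,w\}$; it has degree $1$, hence is extremal by \cref{prop:deg1 extremal}, and \cref{prop:extremal forms cones} applied with $\tau=\{0\}$ gives $\langle y,w,u',v\rangle\in\Sigma$, in particular $\langle w,v\rangle\in\Sigma$, so $w\neq v'$ immediately. (The same extremality also re-proves $w\neq u$ and $w\neq u'$.) You already have all the ingredients for this shortcut in your second paragraph; invoking it would delete the entire $t$-analysis and close the gap.
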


\begin{proof}
Assume that $u'$ exists and let $u+u'=\alpha$ be the corresponding primitive relation. Then substituting it into $r(R)$ \ref{item: PR of x y u} gives a relation
\begin{equation} \label{equ:lemma degree 1 b}
x+y+\alpha=u'+v.
\end{equation}
By $r(R)$ \ref{item: PR of x y u}, $v \neq u$, so $\langle u', v \rangle \in \Sigma$.
Since there are  no $x'$ and $y'$, it follows from 
\cref{lemma: when is eff deg 1 PC} that 
\cref{equ:lemma degree 1 b} is an extremal primitive relation.   
It follows that $\alpha \notin \{u,u',x,y, v'\}$. 
Moreover, $\alpha \notin P$, as otherwise applying \cref{prop:extremal forms cones} to $r(Q)$ and $\tau= \langle \alpha \rangle$ would imply $\langle x,y,\alpha \rangle \in \Sigma$. 
So by \cref{lemma: (a.2) second approx of Sigma(1)}, the only possibility is that $\alpha=z$. Therefore $u'+v=x_i+x_j$ by $r(Q)$ \ref{item: (a) PR of x y z is deg 1}, so we have, after possibly relabeling, that $v = x_i$, $u'=x_j$, which by \cref{lemma: (a.2) second approx of Sigma(1)} proves the statement.

If instead $u'$ does not exist, the statement follows directly from \cref{lemma: (a.2) second approx of Sigma(1)}.
\end{proof}

\begin{Lem}
\label{lemma: no enemy for z}
Assume \ref{item: assumption on X, dim X}---\ref{item: PR of x y u}.
Then $z$ doesn't have an opponent.
\end{Lem}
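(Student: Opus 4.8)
The plan is to argue by contradiction: I would assume $z$ has an opponent $z'$, locate it among the finitely many rays allowed by \cref{lemma: (a.2) third approx of Sigma(1)}, and substitute the relation $z+z'=\beta$ into $r(Q)$ \ref{item: (a) PR of x y z is deg 1} and $r(R)$ \ref{item: PR of x y u} to produce a forbidden curve class or primitive relation.

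First I would record the elementary constraints. Since $\minRC(X)>1$ \ref{item: m >= 3}, \cref{prop:PC order 2} rules out $z+z'=0$, so $\{z,z'\}$ has degree $1$: $z+z'=\beta$ for a single $\beta\in G(\Sigma)$, and this relation is extremal by \cref{prop:deg1 extremal}. Because $\{x,y,z\}$ is a primitive collection we have $z'\notin\{x,y\}$, while $z'\neq z$ and $z'\neq u$ by \ref{item: (a.2) new vector u}; combined with $G(\Sigma)\subseteq P\cup\{x,y,z,u,v'\}$ from \cref{lemma: (a.2) third approx of Sigma(1)}, this leaves only $z'\in P$ or $z'=v'$. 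The case $z'=x_k\in P$ is quick: substituting $z=\beta-x_k$ into $r(Q)$ gives $x+y+\beta=x_i+x_j+x_k$, whose right-hand side involves at most three distinct rays of $P$ and hence spans a cone (a face of some $\langle P\setminus\{x_\ell\}\rangle$), so by \cref{prop:effective classes} this is an effective class of anticanonical degree $0$; it is nonzero since $x\notin P$, contradicting that $X$ is Fano.

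For the case $z'=v'$ I would first note that $\{z,z'\}$ and $\{v,v'\}$ are order-$2$ primitive collections sharing $v'=z'$, so by \cref{lemma: one true enemy} they coincide and $v=z$; then $r(R)$ becomes $x+y+u=z$ and $G(\Sigma)\subseteq P\cup\{x,y,z,u,z'\}$, so $\beta=z+z'\in P\cup\{x,y,u\}$. When $\beta\in\{x,y,u\}$, substituting $z=\beta-z'$ into $x+y+u=z$ kills one of $x,y,u$ and produces a relation $(\text{two of }x,y,u)+z'=0$; the three vectors involved pairwise span cones (as $x,y$ have no opponents by \cref{lemma: (a.2) second approx of Sigma(1)} and the only opponent of $z'$ is $z$), but cannot jointly span a cone by strong convexity (a cone of $\Sigma$ cannot contain both $z'$ and $-z'$), so they form a centrally symmetric primitive collection of order $3$, forcing $\minRC(X)\leq 3$ and contradicting $\minRC(X)=n-2\geq 4$.

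The main obstacle is the last case, $\beta=x_k\in P$. Here substituting $z=x_k-z'$ into $r(Q)$ yields $x+y+x_k=x_i+x_j+z'$ (after checking, via \cref{prop:effective classes}, that $x_k\notin\{x_i,x_j\}$, otherwise a nonzero degree-$0$ effective class appears immediately). If $\langle x_i,x_j,z'\rangle\in\Sigma$ this relation is once more a nonzero degree-$0$ effective class, contradicting Fano; otherwise $\{x_i,x_j,z'\}$ is a primitive collection, not centrally symmetric since $\minRC(X)\geq 4$, so its associated cone is the minimal cone of $\Sigma$ containing the point $x_i+x_j+z'=x+y+x_k$. The crux is then to show $\langle x,y,x_k\rangle\in\Sigma$: I would apply \cref{prop:extremal forms cones} to the extremal relation $r(Q)$ with $\tau$ ranging over large subcones of $\langle P\rangle$ to build the maximal cone $\langle x,y,P\setminus\{x_m\}\rangle\in\Sigma$ for some $x_m\notin\{x_i,x_j,x_k\}$ (available since $|P|=n-1\geq 5$), and then $\langle x,y,x_k\rangle$ is one of its faces. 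Once this is known, the minimal cone containing $x+y+x_k$ is $\langle x,y,x_k\rangle$ itself, so the primitive relation of $\{x_i,x_j,z'\}$ is $x_i+x_j+z'=x+y+x_k$, of degree $0$ — impossible for a toric Fano manifold. This closes every case, so $z'$ cannot exist. The delicate point to get right is the bookkeeping of which $3$-element sets do or do not span cones, together with the count that makes $\langle x,y,P\setminus\{x_m\}\rangle$ come out with exactly $n$ rays (hence a genuine maximal cone), which is where the hypothesis $n\geq 6$ enters.
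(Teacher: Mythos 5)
Your proof is correct and follows essentially the same strategy as the paper's: assume $z'$ exists, use \cref{lemma: (a.2) third approx of Sigma(1)} to force $z'\in P$ or $z'=v'$ (hence $v=z$), then analyze $\beta=z+z'$ case by case, with the decisive contradiction coming from $x+y+z'=0$ (your centrally symmetric order-$3$ collection) against $\minRC(X)\geq 3$. The only differences are cosmetic — e.g.\ you dispatch $z'\in P$ via a nonzero degree-$0$ effective class where the paper contradicts $\langle x_k,x_k'\rangle\notin\Sigma$ via \cref{prop:extremal forms cones} — and both routes are valid.
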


\begin{proof}
We argue by contradiction and assume that $z'$ exists.
Clearly $z' \neq x, y, z$ by $r(Q)$ \ref{item: (a) PR of x y z is deg 1} and
$z' \neq u$ by \ref{item: (a.2) new vector u}.
Furthermore, $z'\neq u'$, otherwise we have $z=u$ by \cref{lemma: one true enemy}, which contradicts \ref{item: (a.2) new vector u}.
Finally, $z' \notin P$, otherwise $z'=x_k$ implies $z=x_k'$, and $r(Q)$ \ref{item: (a) PR of x y z is deg 1} becomes
\[ x+y+x_k' = x_i+x_j,
\]
but applying \cref{prop:extremal forms cones} to $r(Q)$ and $\tau=\langle x_k \rangle$, we get $\langle x_k, x_k' \rangle \in \Sigma$, a contradiction.
Thus by \cref{lemma: (a.2) third approx of Sigma(1)}, the only possibility is $z'=v'$, so $z=v$ by \cref{lemma: one true enemy}. By \cref{lemma: (a.2) third approx of Sigma(1)}, this implies $G(\Sigma) \subset P \cup \{x,y,z,u,z'\}$.

Consider the primitive relation $z+z'=\beta \in G(\Sigma)$. We will show that $\beta = u$. Indeed, it is clear that $\beta \neq z,z'$. 
Combining $z+z'=\beta$ with $r(Q)$ \ref{item: (a) PR of x y z is deg 1}, we have 
\[x+y+\beta=x_i + x_j + z'.\]
If $\beta \in \{x,y\}$, the left hand side is a cone, and we get a non-trivial effective relation of degree 0, which is impossible by \ref{item: assumption on X, dim X}. If $\beta \in P$, applying \cref{prop:extremal forms cones} to $r(Q)$ and $\tau=\langle \beta \rangle$ implies that $\langle x,y,\beta \rangle \in \Sigma$, and we obtain a contradiction as before.

But now, substituting $z+z'=u$ into 
$r(R)$ 
\ref{item: PR of x y u} yields $x+y+z'=0$, hence contradicting $\minRC(X)\geq 3$ \ref{item: m >= 3}.
\end{proof}

\begin{Lem}
\label{lemma: (a.2) proved}
Assume \ref{item: assumption on X, dim X}---\ref{item: (a.2) new vector u}.
Then $\rho(X) \leq 3$. 
\end{Lem}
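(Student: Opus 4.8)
The plan is to cash in the approximations established in \cref{lemma: (a.2) second approx of Sigma(1)}, \cref{lemma: (a.2) third approx of Sigma(1)} and \cref{lemma: no enemy for z}. Together these give $G(\Sigma)\subseteq P\cup\{x,y,z,u,v'\}$, where $v$ is the vector with $r(R)\colon x+y+u=v$ from \ref{item: PR of x y u}, and they say that $x$, $y$, $z$ have no opponent. The $n+3$ vectors $x_0,\dots,x_{n-2},x,y,z,u$ are pairwise distinct (any proper subset of $P$ is part of a basis, $\{x,y,z\}$ is a primitive collection of order $3$, and $u\notin P\cup\{x,y,z,\dots\}$ by \ref{item: (a.2) new vector u}), so $\rho(X)=|G(\Sigma)|-n\ge 3$. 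Hence it suffices to show that, if $v'$ exists, it already lies in $P\cup\{x,y,z,u\}$. Suppose not: then $v$ has an opponent, so $v\ne x,y,z$, and the dichotomy of \cref{lemma: (a.2) third approx of Sigma(1)} forces $v=x_k\in P$ with $v'=x'_k$. By \cref{lemma: one true enemy}, $\{x_k,x'_k\}$ is the unique primitive collection of order $2$ through $x_k$, with relation $x_k+x'_k=\gamma$ for a single $\gamma\in(P\setminus\{x_k\})\cup\{x,y,z,u\}$; the goal is to contradict the existence of such a $\gamma$.

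Substituting $x_k=x+y+u$ from $r(R)$ into $x_k+x'_k=\gamma$ gives $x+y+u+x'_k=\gamma$. If $\gamma\in\{x,y,u\}$, cancelling produces $w_1+w_2+w_3=0$ with three distinct vectors $w_i$, a centrally symmetric primitive relation of order $\le 3$, contradicting \ref{item: m >= 3}. If $\gamma=x_l\in P\setminus\{x_k\}$, then $r(\{x_k,x'_k\})$ has degree $1$, hence is extremal by \cref{prop:deg1 extremal}; applying \cref{prop:extremal forms cones} to it with $\tau=\langle P\setminus\{x_k,x_l\}\rangle$ (a proper nonempty subcone of $P$ since $n\ge 6$, and $\langle x_l,\tau\rangle=\langle P\setminus\{x_k\}\rangle\in\Sigma$) produces the cone $\langle x_k,x_l,P\setminus\{x_k,x_l\}\rangle=\langle P\rangle\in\Sigma$, contradicting that $P$ is a primitive collection.

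This leaves the hard case $\gamma=z$. First I would show that $\{x,y,x'_k\}$ is a primitive collection: its pairs span cones because $x$, $y$ have no opponent and $x'_k$ is not their opponent, and $0\in\Conv(\pi(x),\pi(y),\pi(x'_k))$ because $\pi(x'_k)=\pi(\gamma)=\pi(z)$ while $\pi(x)+\pi(y)+\pi(z)=0$ (apply $\pi$ to $r(Q)$), so $\langle x,y,x'_k\rangle\notin\Sigma$ by the argument of \cref{lemma: x y z not a cone}. By \cref{lemma: PR of x y z} its primitive relation has right-hand side contained in $P$ — in the single-vector case because, the fan being complete, the only generators with vanishing $\pi$-image are those of $P$. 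Combining this relation with $r(\{x_k,x'_k\})$ and $r(Q)$ forces $x_k\in\{x_i,x_j\}$; then either the right-hand sides of $r(R)$ and of $r(\{x,y,x'_k\})$ coincide, forcing $u=x'_k\in\{x,y,z,u\}$ (contradiction), or one further application of \cref{prop:extremal forms cones} exhibits a cone of $\Sigma$ containing both $x_k$ and $x'_k$, again contradicting that $\{x_k,x'_k\}$ is a primitive collection. In all cases we reach a contradiction, so $\rho(X)\le 3$.

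The main obstacle is this last case: one must keep precise track of the few extremal relations available ($r(R)$, the order-$2$ relations, $r(Q)$) and choose the auxiliary cones $\tau$ in \cref{prop:extremal forms cones} so that the forced cones collide with one of the primitive collections $P$ or $\{x_k,x'_k\}$; the single-vector possibility for the relation of $\{x,y,x'_k\}$ is the delicate sub-case, since there \cref{prop:deg1 extremal} does not apply and one has to exploit the equality of right-hand sides with $r(R)$.
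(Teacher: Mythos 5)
Your reduction to the case $v=x_k\in P$ with $\gamma:=x_k+x_k'\in(P\setminus\{x_k\})\cup\{x,y,z,u\}$ matches the paper's strategy, and your treatment of $\gamma\in\{x,y,u\}$ and $\gamma\in P$ is sound (the latter by a different but valid route: the paper instead rewrites $x_k'=2\gamma+\sum_{t}x_t$ via $r(P)$ to get an effective class of degree $2-n$). The argument that $\{x,y,x_k'\}$ is a primitive collection with right-hand side a single element of $P$, forcing $x_k\in\{x_i,x_j\}$, also checks out. The gap is in your very last step, the sub-case where the right-hand sides of $r(R)$ and $r(\{x,y,x_k'\})$ differ, i.e.\ $v=x_i$, $x+y+x_k'=x_j$ with $i\neq j$. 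Here one first deduces (as the paper does) that $x_i+x_i'=u+x_j$ forces $u=x_j'$, so that $x_i+x_i'=z$ and $x_j+x_j'=z$ are two order-$2$ primitive relations. Your claim that ``one further application of \cref{prop:extremal forms cones} exhibits a cone of $\Sigma$ containing both $x_k$ and $x_k'$'' cannot be carried out with the tools you cite: applied to the order-$2$ collection $\{x_k,x_k'\}$ itself, \cref{prop:extremal forms cones} only ever outputs cones of the form $\langle x_k,z,\tau\rangle$ or $\langle x_k',z,\tau\rangle$, never one containing both elements; applied to $r(Q)$ the required precondition $\langle x_i,x_j,\tau\rangle\in\Sigma$ with $x_i'\in G(\tau)$ fails because $\langle x_i,x_i'\rangle\notin\Sigma$; and the relations $r(R)$ and $x+y+x_k'=x_j$ have degree $2$, so \cref{prop:deg1 extremal} does not make them extremal and \cref{prop:extremal forms cones} does not apply to them.

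This residual configuration is in fact not ruled out by any local cone manipulation in the paper either: it is exactly the case where the paper contracts the extremal class $x_i+x_i'=z$ to obtain a smooth toric manifold $Y$ with $\rho(Y)=3$ and then shows that the resulting primitive relations of $Y$ (in particular $x_j+x_j'=x_i+x_i'$) are incompatible with every labelling allowed by Batyrev's structure theorem \cref{thm:Batyrev result rho=3}. That global step --- blowdown plus the Picard-rank-$3$ classification --- is the essential content missing from your argument; without it the case $i\neq j$ remains open and the bound $\rho(X)\leq 3$ is not established.
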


\begin{proof}
Let us summarize the consequences of \ref{item: assumption on X, dim X}---\ref{item: (a.2) new vector u}:
\begin{itemize}
    \item[{\ref{item: 0 in convex hull of x y u}}]
    $0 \in \Conv (\pi(x), \pi(y), \pi(u))$ after possibly relabeling $x$, $y$, $z$;
    \item[{\ref{item: PR of x y u}}] 
    we have a primitive relation $r(R)\colon x+y+u = v$ (\cref{lemma: PR of second PC of order 3});
    \item $x,y,z$ don't have opponents (\cref{lemma: (a.2) second approx of Sigma(1)}, \cref{lemma: no enemy for z});
    \item $G(\Sigma) \subset P \cup \{x,y,z,u,v'\}$ (\cref{lemma: (a.2) third approx of Sigma(1)}), so $\rho(X) \leq 4$;
    \item we have $v \in P$ or $v = z$ (\cref{lemma: (a.2) third approx of Sigma(1)}), so we can consider two cases.
\end{itemize}

\paragraph{Case $v=z$.} By \cref{lemma: no enemy for z}, $v'=z'$ does not exist, hence it follows from \cref{lemma: (a.2) third approx of Sigma(1)} that $G(\Sigma) \subset P \cup \{x,y,z,u\}$, so $\rho(X) \leq 3$.

\paragraph{Case $v\in P$, say $v=x_l$.} If $v=x_l$ doesn't have an opponent, then $G(\Sigma) \subset P \cup \{x,y,z,u\}$ and $\rho(X)\leq 3$.
So the tricky case is when $v=x_l$ has an opponent $x_l' \notin P$.

{Let $v+v'=\beta\in G(\Sigma)$. By $r(R)$, we have that $x+y+u+v'=v+v'=\beta$.
Since $\minRC(X)\geq3$, $\beta\neq x,y,u,v'$. Hence $\beta \in P\cup\{z\}$. If $\beta\in P$, let $\beta=x_k$. Then $v'=x_k-x_l$ and $k\neq l$. Using $r(P)$, we obtain that $v'=2x_k+\sum_{t\neq k,l}x_t$. As the vectors on the right hand side form a cone, we get an effective curve class of degree $2-n$, which is impossible by \ref{item: assumption on X, dim X}. 

So $\beta=z$. By $r(Q)$, we obtain that 
$x+y+v'=x_i+x_j-x_l$. If $l\neq i,j$, then again, by using $r(P)$ \ref{item: CSPR of length n-1}, we may write 
$x_i+x_j-x_l$ as  $x_i+x_j+ \sum_{t\neq l}x_t$.
As the vectors on the right hand side form a cone, we obtain an effective curve class of degree $3-n$, which is impossible by \ref{item: assumption on X, dim X}. 

So $l=i$ or $l=j$. Up to symmetry, we may assume $l=i$, so $v=x_i$, $x_i+x'_i=z$. By $r(Q)$, we have that
$x+y+x'_i=x_j$. Combining this with $r(R)$, we obtain that $x'_i+x_i=u+x_j$. Furthermore, $u\neq v'=x'_i$ and $u\neq x_i$.  
If $u,x_j$ form a cone, we obtain an effective non-trivial curve class of degree $0$, which is impossible by \ref{item: assumption on X, dim X}. 
So $u=x'_j$,
$v=x_i$ and we have two primitive relations}
\[ x_i+x_i'=z 
\quad\mbox{and}\quad
x_j+x_j'=z
. \]
Then either $i=j$, in which case $\rho(X) \leq 3$, or $i\neq j$, and we notice that they are both degree $1$ and hence extremal, so we can perform the contraction associated to one of them, say we contract the curve class $x_i+x_i'=z$:
\[\begin{tikzcd}[row sep=5pt]
	X & Y, \\
	V(z) & V(x_i,x_i').
	\arrow[from=1-1, to=1-2]
	\arrow[from=2-1, to=2-2]
\end{tikzcd}\]
By \cref{prop:PC_blowdown},
\begin{align*}
r(P_Y)\colon
& x_0 + \cdots + x_{n-2}=0, \\
r(R_Y)\colon 
& x+y+x'_j=x_i, \\
r(Q')\colon
& x+y+x'_i=x_j, \\
r(Q'')\colon
& x_j + x'_j=x_i + x'_i
\end{align*}
are primitive relations in $Y$.
Since $\rho(Y)=3$, we apply \cref{thm:Batyrev result rho=3}. 
We adopt the same notation as in \cref{thm:Batyrev result rho=3} and observe that we are in the case $l=5$, hence
\[ G(\Sigma)=\sqcup_{h=0}^{4}X_h=\{x_0, \dots, \check{x_j}, \dots, x_{n-2}\} \sqcup \{x_j\} \sqcup \{x,y\} \sqcup \{x'_j\} \sqcup \{x'_i\},\]
and either $X_2 \sqcup X_3=\{x_0, \dots, x_{n-2}\}$, or $\overline c= \overline b=\overline 0$ and $X_4 \sqcup X_0=\{x_0, \dots, x_{n-2}\}$. 
However, all possibilities for $\{x_j\}$ lead to a contradiction. Indeed:
\begin{itemize}
\item[-] if $X_3=\{x_j\}$, then we have $r_3\colon \, \overline 1\cdot X_3+\overline 1\cdot X_4= x_j+x'_j=x_i + x'_i \neq \overline 1\cdot X_h$ for all $h=0,\dots,4$;
\item[-] if $X_2=\{x_j\}$, then we have $r_1 \colon\, \overline 1\cdot X_1+\overline 1\cdot X_2= x'_j+x_j=x_i + x'_i \neq \overline 1\cdot X_h$ for all $h=0,\dots,4$;
\item[-] if $X_4=\{x_j\}$, then we have $r_3 \colon\, \overline 1\cdot X_3+\overline 1\cdot X_4= x'_j+x_j=x_i + x'_i \neq \overline 1\cdot X_h$ for all $h=0,\dots,4$;
\item[-] if $X_0=\{x_j\}$, then we have $r_0 \colon\, \overline 1\cdot X_0+\overline 1\cdot X_1= x_j+x'_j=x_i + x'_i \neq 
\cancel{\overline c \cdot X_2} 
+ (\cancel{\overline b} + \overline 1)X_3=\overline 1\cdot X_3$.
\end{itemize}
This concludes the last case, and we get that $\rho(X) \leq 3$.
\end{proof}

\begin{Prop}
\label{proposition: (a) x+y+z = xi+xj}
Let $X$ be a projective toric Fano manifold of $\dim(X)=n \geq 5$, $\minRC(X) \geq 3$, which admits a primitive relation
$r(P)\colon x_0+x_1+ \dots + x_{n-2}=0$ \ref{item: assumption on X, dim X}---\ref{item: CSPR of length n-1}.
Let $x,y,z \in G(\Sigma) \setminus P$ be such that $0 \in \Conv (\pi(x), \pi(y), \pi(z))$ \ref{item: x y z not in S}---\ref{item: 0 in convex hull of x y z}.
Assume in addition that $\{x, y, z\}$ is a primitive collection of degree $1$ \ref{item: (a) PR of x y z is deg 1}.
Then $\rho(X) \leq 3$.
\end{Prop}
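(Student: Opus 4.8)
# Proof Proposal for Proposition \ref{proposition: (a) x+y+z = xi+xj}

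The plan is to reduce the proposition to the two auxiliary lemmas already in place, \cref{lemma: (a.1)} and \cref{lemma: (a.2) proved}, which together exhaust the possibilities for $G(\Sigma)$ under hypotheses \ref{item: assumption on X, dim X}---\ref{item: (a) PR of x y z is deg 1}. Concretely: either $G(\Sigma)$ is contained in $P \cup \{x,y,z,x',y',z'\}$, in which case \cref{lemma: (a.1)} shows that the opponents $x'$, $y'$, $z'$ do not exist, so $G(\Sigma) \subseteq P \cup \{x,y,z\}$ and $\rho(X) = |G(\Sigma)| - n \leq (n+2) - n = 2$; or there exists a vector $u \in G(\Sigma) \setminus (P \cup \{x,y,z,x',y',z'\})$, which is precisely assumption \ref{item: (a.2) new vector u}, and then \cref{lemma: (a.2) proved} gives $\rho(X) \leq 3$ directly. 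So the body of the proof is essentially a one-line case split invoking those two lemmas.

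The one point requiring a small argument is the dimension hypothesis: the proposition is stated for $n \geq 5$, whereas several of the lemmas feeding into \cref{lemma: (a.2) proved} (e.g. \cref{lemma: PR of second PC of order 3}, \cref{lemma: (a.2) first approx of Sigma(1)}) explicitly invoke $n \geq 6$ when applying \cref{prop:extremal forms cones} to a cone $\tau = \langle x_k, x_l\rangle$ together with a relation supported on the remaining $x_i$'s. I would check that when $n = 5$ the relevant applications of \cref{prop:extremal forms cones} still go through: with $n = 5$ the primitive collection $P$ has $4$ elements $x_0,\dots,x_3$, and one needs $\langle z, x_k, x_l\rangle \in \Sigma$ with $z \notin P$, which follows from \cref{prop:extremal forms cones} applied to $r(Q)$ and $\tau = \langle x_k, x_l\rangle$ provided $\{x_k,x_l\} \cap \{x_i,x_j\} = \varnothing$ and $\langle x_i, x_j, x_k, x_l \rangle \in \Sigma$ — but $\langle x_0,\dots,x_3\rangle \notin \Sigma$ since $P$ is a primitive collection, so this can fail when $\{i,j,k,l\} = \{0,1,2,3\}$. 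I expect this is the main obstacle, and the resolution is that in the $n=5$ case one either has $x_i = x_j$ or $x_k = x_l$ (so the four indices are not distinct) by a counting/degree argument, or one reroutes through a different application of \cref{prop:extremal forms cones}; alternatively, the cleanest fix is simply to carry the hypothesis $n \geq 6$ wherever it is genuinely needed and note that the $n = 5$ cases are either vacuous or handled by the low-dimensional classification.

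Thus the proof I would write is short:

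\begin{proof}
By hypothesis we are in the setting \ref{item: assumption on X, dim X}---\ref{item: (a) PR of x y z is deg 1}. There are two cases according to whether $G(\Sigma)$ is contained in $P \cup \{x,y,z,x',y',z'\}$ or not.

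If $G(\Sigma) \subseteq P \cup \{x,y,z,x',y',z'\}$, then \cref{lemma: (a.1)} shows that $x'$, $y'$, $z'$ do not exist, so $G(\Sigma) = P \cup \{x,y,z\}$ and $\rho(X) = |G(\Sigma)| - \dim(X) \leq (n-1) + 3 - n = 2 \leq 3$.

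Otherwise there exists $u \in G(\Sigma) \setminus (P \cup \{x,y,z,x',y',z'\})$, which is assumption \ref{item: (a.2) new vector u}. Then \cref{lemma: (a.2) proved} applies and gives $\rho(X) \leq 3$.
\end{proof}

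(If the $n = 5$ versus $n \geq 6$ discrepancy in the sub-lemmas turns out to be substantive, the honest statement is to require $n \geq 6$ here, matching \cref{theorem: P n-2 implies rho<=3}; the $n = 5$ case is not needed for the main theorem.)
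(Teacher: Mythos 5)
Your proof is correct and essentially identical to the paper's: the same two-case split on whether $G(\Sigma) \subseteq P \cup \{ x, y, z, x', y', z' \}$, invoking \cref{lemma: (a.1)} in the first case (giving $\rho(X)\leq 2$) and \cref{lemma: (a.2) proved} in the second. Your side remark about the $n\geq 5$ versus $n\geq 6$ discrepancy is well taken — the supporting lemmas are proved under assumption \ref{item: assumption on X, dim X}, which requires $n\geq 6$, and the proposition is only ever invoked in the proof of \cref{theorem: P n-2 implies rho<=3} where $n\geq 6$ holds, so the safe hypothesis is indeed $n\geq 6$.
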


\begin{proof}
We recall that assumptions \ref{item: assumption on X, dim X}---\ref{item: CSPR of length n-1} imply \ref{item: x y z not in S}---\ref{item: 0 in convex hull of x y z}. 
Then either $G(\Sigma) \subset P \cup \{ x, y, z, x', y', z' \}$, in which case $\rho(X) \leq 2$ by \cref{lemma: (a.1)}, or there exists a vector $u \in G(\Sigma) \setminus ( P \cup \{ x, y, z, x', y', z' \} )$ \ref{item: (a.2) new vector u}, in which case \cref{lemma: (a.2) proved} implies $\rho(X) \leq 3$, and we are done.
\end{proof}

\subsubsection{Degree two}

Still working in the setting 
\ref{item: assumption on X, dim X}---\ref{item: 0 in convex hull of x y z}, we assume that $\{x,y,z\}$ is a primitive collection whose primitive relation has degree $2$, and
by \cref{proposition: (a) x+y+z = xi+xj}, we can exclude the case when
degree one primitive collections as in 
\ref{item: x y z not in S}---\ref{item: (a) PR of x y z is deg 1} exist. 
In other words, these are the additional assumptions for this part of the proof:
\begin{enumerate}[start=\theassumption, label=(\Roman*)]
    \item\assumption
    \label{item: (b) primitive relation degree 2}
    we have a primitive relation $r(Q) \colon \,x+y+z = v$ for some $v \in G(\Sigma)$;
    \item\assumption
    \label{item: no prim rel of degree 1}
    there is no primitive collection $\{a,b,c\} \subset G(\Sigma) \setminus P$ with $0 \in \Conv(\pi(a),\pi(b),\pi(c))$ whose primitive relation has degree $1$.
\end{enumerate}

\begin{Lem}
\label{lemma: (b) first approximation of Sigma(1)}
In the setting \ref{item: assumption on X, dim X}---\ref{item: 0 in convex hull of x y z} and \ref{item: (b) primitive relation degree 2}---\ref{item: no prim rel of degree 1}, we have
\[  G(\Sigma) \subset P \cup \{ x, y, z, x', y', z', v'\}
\quad \text{and} \quad
v \in P \cup \{ x', y', z' \}.
\] 
\end{Lem}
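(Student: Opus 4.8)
The plan is to determine $G(\Sigma)$ generator by generator. Fix an arbitrary $w \in G(\Sigma)\setminus(P\cup\{x,y,z\})$; I will show that $w$ is the opponent of one of $x$, $y$, $z$, $v$. By \cref{remark: plane geometry} I may relabel $x,y,z$ so that $0\in\Conv(\pi(w),\pi(x),\pi(y))$. The argument proving \cref{lemma: x y z not a cone} uses only that the relevant triple is disjoint from $P$ and that $0$ lies in the convex hull of its $\pi$-image, so it applies verbatim to $\{w,x,y\}$ and gives $\langle w,x,y\rangle\notin\Sigma$. Hence either two of $w,x,y$ fail to span a cone, or $\{w,x,y\}\in\PC(X)$. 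Since $\{x,y,z\}$ is a primitive collection, $\langle x,y\rangle\in\Sigma$, so in the first case the failing pair is $\{w,x\}$ or $\{w,y\}$, and \cref{lemma: one true enemy} forces $w=x'$ or $w=y'$.

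It remains to treat $\{w,x,y\}\in\PC(X)$. By \cref{lemma: PR of x y z}---which, just like \cref{lemma: x y z not a cone}, is valid for the triple $\{w,x,y\}$ once one has the analogue of \ref{item: 0 in convex hull of x y z}---the primitive relation is either $w+x+y=x_k+x_l$ with $x_k,x_l\in P$, or $w+x+y=b$ for some $b\in G(\Sigma)$. The first option is a degree-$1$ primitive relation of a primitive collection contained in $G(\Sigma)\setminus P$ with $0$ in the convex hull of the images, hence is forbidden by \ref{item: no prim rel of degree 1}. In the second option I subtract $r(Q)$ \ref{item: (b) primitive relation degree 2} to obtain $w+v=b+z$. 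If $\langle b,z\rangle\in\Sigma$, then by \cref{prop:effective classes} this is an effective $1$-cycle of anticanonical degree $0$, hence the zero class, so $\{w,v\}=\{b,z\}$ as multisets; this is impossible, since $w\neq z$ and $w\neq b$ (else $x+y=0$, contradicting \ref{item: m >= 3}). Thus $\langle b,z\rangle\notin\Sigma$, i.e.\ $b=z'$, and $w+v=z+z'$; applying \cref{lemma: sum of enemies gives more enemies} to the pair of opponents $z,z'$ yields $w=v'$ (or directly $w=z'$). Combining the cases, $w\in\{x',y',z',v'\}$, and as $w$ was arbitrary this gives $G(\Sigma)\subset P\cup\{x,y,z,x',y',z',v'\}$.

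For the final claim, $v\in G(\Sigma)$ belongs to this set, and $v\notin\{x,y,z\}$, since otherwise $r(Q)$ would give a centrally symmetric primitive relation of order $2$, contradicting \ref{item: m >= 3}; moreover $v\neq v'$ by the definition of an opponent. Hence $v\in P\cup\{x',y',z'\}$.

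The most delicate point will be the degree-two subcase: one has to rule out $\langle b,z\rangle$ being a cone via the effective-cycle count, and then invoke \cref{lemma: sum of enemies gives more enemies} correctly while keeping track of which of $w,x,y,z,v,b$ may a priori coincide; and one should double-check at the outset that \cref{lemma: x y z not a cone} and \cref{lemma: PR of x y z}, although phrased for $\{x,y,z\}$, genuinely transfer to the translated triple $\{w,x,y\}$. Everything else is short bookkeeping.
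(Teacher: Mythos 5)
Your proof is correct and follows essentially the same route as the paper's: reduce an arbitrary new generator to a degree-two primitive relation via \cref{remark: plane geometry} and \ref{item: no prim rel of degree 1}, subtract $r(Q)$, and kill the resulting degree-zero effective class using the Fano hypothesis. The only cosmetic difference is that the paper excludes $x',y',z'$ from the outset and tests whether the new generator forms a cone with $v$ (yielding the conclusion in one step), whereas you test $\langle b,z\rangle$ first and then invoke \cref{lemma: sum of enemies gives more enemies}; both are fine.
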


\begin{proof}
If there is a generator $u \in G(\Sigma)\setminus (P \cup \{x,y,z,x',y',z'\})$, then by \cref{remark: plane geometry} and \ref{item: no prim rel of degree 1} we can assume that we have a primitive relation of the form 
\[x+y+u=w.\]
Combining it with \ref{item: (b) primitive relation degree 2}, we get $u+v=w+z$, so $u=v'$, otherwise $\langle u, v \rangle \in \Sigma$ and we get an effective non-trivial curve class of degree zero.

In particular, since $v \neq v'$, we have 
$v\in P \cup \{x',y',z'\}$.
\end{proof}

\begin{Lem}
\label{lemma: (b.1) there is no xm'}
In the setting \ref{item: assumption on X, dim X}---\ref{item: 0 in convex hull of x y z} and \ref{item: (b) primitive relation degree 2}---\ref{item: no prim rel of degree 1}, 
assume that $v=x_m\in P$. Then $x_m'$ does not exist.
\end{Lem}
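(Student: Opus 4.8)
The plan is to argue by contradiction. Assume $x_m'$ exists. Then $\{x_m,x_m'\}$ is the unique order-$2$ primitive collection containing $x_m$ (\cref{lemma: one true enemy}), and since $X$ is Fano \ref{item: assumption on X, dim X} its primitive relation must have its target cone a single ray: writing $x_m+x_m'=\mu_1w_1+\cdots+\mu_sw_s$, the degree $2-\sum\mu_j$ is positive, forcing $s=1$, $\mu_1=1$. Thus $x_m+x_m'=\gamma$ for a single $\gamma\in G(\Sigma)$, with $\gamma\neq x_m,x_m'$; this relation has degree $1$, hence is extremal by \cref{prop:deg1 extremal}. Since $v=x_m$, we have $x_m'=v'$, so \cref{lemma: (b) first approximation of Sigma(1)} gives $\gamma\in P\cup\{x,y,z,x',y',z'\}$. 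I would then split into the three cases $\gamma\in P$, $\gamma\in\{x,y,z\}$, $\gamma\in\{x',y',z'\}$.

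In the first case, say $\gamma=x_k$ with $k\neq m$: substituting $x_m=-\sum_{i\neq m}x_i$ from $r(P)$ \ref{item: CSPR of length n-1} into $x_m+x_m'=x_k$ rewrites it as $x_m'=2x_k+\sum_{i\neq m,k}x_i$, whose right-hand generators are exactly $P\setminus\{x_m\}$, a cone of $\Sigma$. By \cref{prop:effective classes} this is an effective curve class of degree $2-(n-1)=3-n<0$, contradicting the Fano condition (here $n\geq 6$ \ref{item: assumption on X, dim X} is essential). In the second case, by the symmetry of $r(Q)$ \ref{item: (b) primitive relation degree 2} in $x,y,z$ we may take $\gamma=x$; then $x_m'=x-x_m=-(y+z)$ by $r(Q)$, so $x_m'+y+z=0$. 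The three vectors are pairwise distinct (otherwise $y$ or $z$ equals $-2$ times a primitive vector), and $\{y,z\}$ generates a cone (it is a proper subset of the primitive collection $\{x,y,z\}$), so either $\{x_m',y,z\}$ is a centrally symmetric primitive collection of order $3$, or one of $\{x_m',y\},\{x_m',z\}$ is an order-$2$ primitive collection whose primitive relation — of positive degree — must force $-z$, respectively $-y$, into $G(\Sigma)$, producing an order-$2$ centrally symmetric relation. In every case $\minRC(X)\leq 2$, contradicting \ref{item: m >= 3}.

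The remaining case $\gamma\in\{x',y',z'\}$, say $\gamma=x'$, is the delicate one. The opponent pair $\{x,x'\}$ has (by the same degree argument) a degree-$1$ extremal primitive relation $x+x'=\delta$ with $\delta\in G(\Sigma)$, so $x+x_m+x_m'=\delta$, and substituting $r(Q)$ this becomes $2x+y+z+x_m'=\delta$. Since $\delta\neq x,x'$, \cref{lemma: (b) first approximation of Sigma(1)} gives $\delta\in P\cup\{y,z,y',z',x_m'\}$, and I would close each possibility by one of three recurring mechanisms: (i) if $\delta\in P$, subtract $r(Q)$ and substitute $r(P)$ to get $x+x_m'=2x_p+\sum_{i\neq m,p}x_i$, again an effective curve class of negative degree $3-n$; (ii) if $\delta=x_m'$, then $x+x_m=0$, violating \ref{item: m >= 3}; (iii) if $\delta\in\{y,z\}$, apply \cref{prop:extremal forms cones} to $x+x'=\delta$ with $\tau$ the ray on the remaining one of $y,z$ (legitimate since $\langle x,y\rangle,\langle y,z\rangle,\langle x,z\rangle\in\Sigma$) to deduce $\langle x,y,z\rangle\in\Sigma$, contradicting that $\{x,y,z\}$ is a primitive collection. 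Finally, if $\delta\in\{y',z'\}$ one brings in the degree-$1$ extremal relation $y+y'=\eta$ (resp. $z+z'=\theta$) of that opponent pair; one first rules out $\eta\in\{x,x'\}$ (each produces an order-$2$ centrally symmetric relation via \cref{lemma: sum of enemies gives more enemies} or direct cancellation), and then runs mechanisms (i)–(iii) on $\eta$, possibly iterating one further step when $\eta=z'$.

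The hard part is precisely this last sub-case: it amounts to following the chains of opponents $x\mapsto x'\mapsto\cdots$ and $y\mapsto y'\mapsto\cdots$ while keeping track of all coincidences among the finitely many generators listed in \cref{lemma: (b) first approximation of Sigma(1)}, so the bookkeeping — not any single estimate — is the obstacle; every branch nonetheless terminates by either a negative-degree effective curve class (using $r(P)$ and $n\geq 6$), a centrally symmetric relation of order $\leq 3$ (contradicting \ref{item: m >= 3}), or a cone $\langle x,y,z\rangle\in\Sigma$ that cannot exist. Throughout, the inequality $n\geq 6$ enters exactly through the signs of quantities such as $3-n$ and $4-n$ in mechanism (i).
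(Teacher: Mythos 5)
Your first two cases are sound: for $\gamma=x_m+x_m'\in P$ the substitution of $r(P)$ does produce an effective class of negative degree (the degree is $2-n$ rather than $3-n$, but this is immaterial), and for $\gamma\in\{x,y,z\}$ you correctly extract a centrally symmetric relation of order $3$ contradicting \ref{item: m >= 3} (note that the sub-branch with $x_m'=y'$ or $x_m'=z'$ is vacuous, since by \cref{lemma: one true enemy} that would force $x_m=y$ or $x_m=z$). The problem is the third case, $\gamma\in\{x',y',z'\}$: this is not a proof but a plan. You introduce $\delta=x+x'$, then $\eta=y+y'$, then concede that the argument may require ``iterating one further step when $\eta=z'$'' and that ``the bookkeeping --- not any single estimate --- is the obstacle.'' An open-ended chain of opponents with unresolved coincidences among the generators of \cref{lemma: (b) first approximation of Sigma(1)} is precisely where such arguments tend to break, and you have not shown that every branch terminates in one of your three mechanisms. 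As written, the lemma is not established.

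The paper avoids this entirely by never analyzing the relation $x_m+x_m'=\gamma$ directly. Instead it notes (via \cref{remark: plane geometry} and \cref{lemma: one true enemy}) that $x_m'$ forms a primitive collection with two of $x,y,z$, say $\{x,y,x_m'\}$, whose relation must have degree $2$ by hypothesis \ref{item: no prim rel of degree 1}: $x+y+x_m'=w$. Subtracting $r(Q)$ gives $x_m+x_m'=z+w$, so $w=z'$ by \cref{lemma: sum of enemies gives more enemies}; setting $\alpha=x_m+x_m'=z+z'$ and substituting into \ref{item: (b) primitive relation degree 2} yields the extremal primitive relation $x+y+\alpha=x_m+z'$, which forces $\alpha$ to be a generator outside the finite list of \cref{lemma: (b) first approximation of Sigma(1)} --- a clean contradiction in all cases at once. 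The key move you are missing is to exploit the standing hypothesis \ref{item: no prim rel of degree 1} to pin down the primitive relation of a collection \emph{containing} $x_m'$, rather than the relation of the opponent pair $\{x_m,x_m'\}$ itself; that is what collapses your delicate case.
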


\begin{proof}
The primitive relation $r(Q)$ \ref{item: (b) primitive relation degree 2} becomes $x+y+z=x_m$, and by \cref{lemma: (b) first approximation of Sigma(1)}, we have $G(\Sigma) \subset P \cup \{x,y,z,x',y',z', x_m'\}$.

Assume to the contrary that we have $x_m' \in G(\Sigma)$. 
Clearly, $x_m'\notin P$ because $x_m$ forms a $2$-dimensional cone in $\Sigma$ with any other generator $x_i\in P$. By \cref{remark: plane geometry}, $x_m'$ makes a primitive collection with two of $x,y,z$. 
Without loss of generality, assume we have a primitive relation of the form 
\[x+y+x_m'=w.\]
Combining it with $r(Q)$ \ref{item: (b) primitive relation degree 2}, we get $x_m'+x_m=z+w$, so $w=z'$ by \cref{lemma: sum of enemies gives more enemies}. 
Let $\alpha = x_m+x_m'=z+z'$. Then $\alpha \not\in P$ because $x_m' = \alpha - x_m$ is not in $\Span P$.
Now substituting $z = \alpha-z'$ into $r(Q)$ \ref{item: (b) primitive relation degree 2} yields
\begin{equation} \label{equ:lemma degree 2}
x+y+\alpha=x_m+z'.    
\end{equation}
Since $x_m \neq z$ by $r(Q)$ \ref{item: (b) primitive relation degree 2}, we have $\langle x_m, z'\rangle \in \Sigma$, so \cref{equ:lemma degree 2} is an extremal primitive relation. This implies that $\alpha \neq x,y,z,x',y',z',x'_m$, a contradiction with \cref{lemma: (b) first approximation of Sigma(1)}.
\end{proof}

\begin{Lem} \label{lemma: degree 2 bound 1}
In the setting \ref{item: assumption on X, dim X}---\ref{item: 0 in convex hull of x y z} and \ref{item: (b) primitive relation degree 2}---\ref{item: no prim rel of degree 1}, 
assume that $v=x_m\in P$. Then $\rho(X) \leq 3$.
\end{Lem}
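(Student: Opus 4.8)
The plan is to start from the consequences of \cref{lemma: (b.1) there is no xm'}: under the hypothesis $v = x_m \in P$ it gives that $x_m$ has no opponent, that $r(Q)$ reads $x + y + z = x_m$, and — since then $v' = x_m'$ does not exist — that $G(\Sigma) \subseteq P \cup \{x,y,z,x',y',z'\}$. Because $\{x,y,z\}$ is a primitive collection, its pairs span cones, so none of $x',y',z'$ lies in $\{x,y,z\}$, and by \cref{lemma: one true enemy} no two of $x',y',z'$ coincide. Since $G(\Sigma) \setminus P \supseteq \{x,y,z\}$, it follows that $\rho(X) = |G(\Sigma)| - n$ equals $2$ plus the number of opponents among $x',y',z'$ that lie outside $P \cup \{x,y,z\}$. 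So proving $\rho(X) \le 3$ amounts to showing that at most one of $x',y',z'$ is such a ``new'' generator, and by the symmetry of $r(Q)$ in $x,y,z$ it suffices to rule out the case where $x'$ and $y'$ are both new.

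Assuming this, write $x + x' = \alpha$ and $y + y' = \beta$ for the associated primitive relations, with $\alpha,\beta \in G(\Sigma)$ and $\alpha \ne x,x'$, $\beta \ne y,y'$. Substituting $x = \alpha - x'$ and $y = \beta - y'$ into $r(Q)$ gives
\[ \alpha + y + z = x_m + x', \qquad x + \beta + z = x_m + y', \]
and since $x_m$ has no opponent we have $\langle x_m, x'\rangle, \langle x_m, y'\rangle \in \Sigma$. By \cref{lemma: (b) first approximation of Sigma(1)} (together with \cref{lemma: (b.1) there is no xm'}), $\alpha \in P \cup \{y,z,y',z'\}$ and $\beta \in P \cup \{x,z,x',z'\}$. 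I would then go through these possibilities one by one: in each branch, use $r(P)$ to replace $x_m$ by $-\sum_{i \ne m} x_i$ when helpful, and apply \cref{prop:effective classes} (no effective curve class of nonpositive degree on a Fano), \cref{prop:extremal forms cones} and \cref{lemma: when is eff deg 1 PC} (to force certain sets to span cones, using $n \ge 6$), and the hypothesis $\minRC(X) \ge 3$ (no centrally symmetric primitive relation of order $\le 2$). I expect each branch to be either outright impossible, or to pin down $z'$ so that no third new generator appears, or to leave $X$ in a configuration of Picard rank exactly $3$.

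In the last situation I would proceed exactly as in the proof of \cref{lemma: (a.2) proved}: exhibit a degree-$1$ primitive relation — extremal by \cref{prop:deg1 extremal} — contract the corresponding extremal ray to obtain $f\colon X \to Y$ with $\rho(Y) = 3$, transfer the primitive relations of $X$ along $f$ via \cref{prop:PC_blowdown}, and then apply Batyrev's description of Picard-rank-$3$ toric manifolds (\cref{thm:Batyrev result rho=3}) to see that the resulting configuration is impossible, forcing $\rho(X) \le 3$.

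The main obstacle is the branch analysis of the second paragraph, and specifically the branches $\alpha \in P$ and $\beta \in P$. There the pertinent relations have strictly positive degree, so — unlike in the degree-one analysis of \cref{lemma: (a.1)}, where the analogue of $\alpha + y + z = x_m + x'$ has degree zero and collapses immediately — one cannot read off a contradiction at once; instead one must substitute via $r(P)$, carefully monitor which subsets of $G(\Sigma)$ span cones, and squeeze out a contradiction with the Fano condition or with $\minRC(X) \ge 3$. Feeding the surviving cases into the Picard-rank-$3$ reduction and checking incompatibility with Batyrev's relations is the remaining piece of bookkeeping.
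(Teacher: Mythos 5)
Your setup is correct and matches the paper's: reduce to showing at most one of $x',y',z'$ exists, suppose $x'$ and $y'$ both do, and introduce $\alpha = x+x'$, $\beta = y+y'$ together with the induced relations $\alpha+y+z = x_m+x'$ and $x+\beta+z = x_m+y'$. But the core of the argument --- what you yourself flag as ``the main obstacle,'' namely the branches with $\alpha,\beta\in P$ --- is exactly where the proof lives, and you leave it unresolved: expecting that a branch-by-branch analysis, possibly followed by a Batyrev rank-$3$ contraction as in \cref{lemma: (a.2) proved}, will close the case is not an argument. The paper closes it with two specific moves absent from your outline. First, since $x_m$ has no opponent (\cref{lemma: (b.1) there is no xm'}), $\langle x_m,x'\rangle\in\Sigma$, and \cref{lemma: when is eff deg 1 PC} applies to $\alpha+y+z = x_m+x'$: its degenerate alternatives would force $y$ or $z$ into $\{x_m,x'\}$ (the pair $y,z$ spans a cone, so they cannot be the pair of opponents), which is impossible since $y,z\notin P\cup\{x'\}$. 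Hence the relation is an extremal primitive relation, and \cref{prop:extremal forms cones} forces $\alpha$ to span a cone with each of $y$ and $z$, so $\alpha\notin\{y,z,y',z'\}$ and $\alpha = x_l\in P$; symmetrically $\beta = x_k\in P$. This collapses your entire branch list in one stroke --- no surviving ``Picard rank exactly $3$ configuration'' remains, and no contraction or appeal to \cref{thm:Batyrev result rho=3} is needed.

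Second --- and this is the key idea missing from your proposal --- one does not fight the two degree-$1$ relations separately (as you note, neither yields a contradiction on its own). Instead, sum $x+x' = x_l$, $y+y' = x_k$ and $r(Q)$ to obtain the single relation $x_l+x_k+z = x'+y'+x_m$, which has degree $0$, and then show its right-hand side spans a cone: apply \cref{prop:extremal forms cones} to $x+x'=x_l$ with $\tau=\langle x_m,x_k\rangle$ to get $\langle x_m,x_k,x'\rangle\in\Sigma$, and then to $y+y'=x_k$ with $\tau=\langle x_m,x'\rangle$ to get $\langle x_m,x',y'\rangle\in\Sigma$ (this is where $n\geq 5$ is used). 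By \cref{prop:effective classes} this produces a nontrivial effective curve class of degree $0$ (nontrivial because $z$ cannot appear on the right-hand side), contradicting the Fano hypothesis \ref{item: assumption on X, dim X}. Without this combination step your plan has no mechanism for eliminating the case $\alpha,\beta\in P$, which is precisely the case that actually has to be killed; substituting via $r(P)$, as you suggest, only produces relations of large negative degree whose left-hand sides need not span cones, so no contradiction can be read off that way.
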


\begin{proof}
By \cref{lemma: (b) first approximation of Sigma(1)} and \cref{lemma: (b.1) there is no xm'}, we have $G(\Sigma) \subset P \cup \{x,y,z,x',y',z'\}$, and, as before, the primitive relation \ref{item: (b) primitive relation degree 2} is $r(Q)\colon\,x+y+z=x_m$.

We show that at most one of $x',y',z'$ exists. Suppose to the contrary that for example $x',y' \in G(\Sigma)$, and let $\alpha =x+x'$, $\beta=y+y' \in G(\Sigma)$.
Since 
\[
\alpha+y+z=x_m+x'
\quad \text{and} \quad
\langle x_m, x' \rangle \in \Sigma,
\]
applying \cref{lemma: when is eff deg 1 PC} shows that 
this is an extremal primitive relation:
indeed, either $\alpha$ and $y$ are opponents and $z\in\{x_m,x'\}$, or $\alpha$ and $z$ are opponents and $y\in\{x_m,x'\}$. But $y,z\notin P\cup\{x'\}$, so this is a contradiction.
It follows that $\alpha \not\in\{x,x',y,y',z,z'\}$, so $\alpha =x_l \in P$; similarly, $\beta=x_k \in P$. But we have  
\[  \alpha+\beta+z=x'+y'+x_m , \]
which is not possible since we show that the right hand side forms a cone. 
Indeed, applying \cref{prop:extremal forms cones} to $x+x'=x_l$ and $\tau=\langle x_m, x_k \rangle$ (we use here that $n\geq5$), we obtain $\langle x_m, x_k, x' \rangle \in \Sigma$, and applying then \cref{prop:extremal forms cones} to  $y+y'=x_k$ and $\tau= \langle x_m, x' \rangle$ we have that $\langle x_m, x', y' \rangle \in \Sigma$.
So, after possibly relabelling $x$, $y$, $z$, we have $G(\Sigma) \subset P \cup \{x,y,z,x'\}$, hence $\rho(X) \leq 3$.
\end{proof}

\begin{Lem} \label{lemma: degree 2 bound 2}
In the setting \ref{item: assumption on X, dim X}---\ref{item: 0 in convex hull of x y z} and \ref{item: (b) primitive relation degree 2}---\ref{item: no prim rel of degree 1}, 
assume that $v = x'$. Then $\rho(X) \leq 3$.
\end{Lem}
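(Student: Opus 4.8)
The aim is to prove that, beyond the four pairwise distinct generators $x,y,z,x'$ that lie outside $P$, the fan $\Sigma$ has no further generator outside $P$; since $\rho(X)=|G(\Sigma)|-n$, this forces $\rho(X)\le(n-1)+4-n=3$. I would start with the easy reductions. As $\{x,x'\}$ is a primitive collection, the unique opponent of $v=x'$ is $x$ (\cref{lemma: one true enemy}), so \cref{lemma: (b) first approximation of Sigma(1)} already gives $G(\Sigma)\subseteq P\cup\{x,y,z,x',y',z'\}$. The vectors $x,y,z,x'$ are pairwise distinct and outside $P$: the sets $\langle x,y\rangle,\langle x,z\rangle,\langle y,z\rangle$ are cones of $\Sigma$, being proper subsets of the primitive collection $\{x,y,z\}$, so neither $y$ nor $z$ is the opponent of $x$ and hence $x'\notin\{x,y,z\}$; and if $x'\in P$ then $v\in P$, which is the setting of \cref{lemma: degree 2 bound 1}, so we may assume $x'\notin P$. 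Any generator $y'$ or $z'$ that exists is distinct from each of $x,y,z,x'$ (for instance, $y'=x'$ would force $x=y$ by uniqueness of opponents, while $y'\in\{x,z\}$ would contradict $\langle x,y\rangle,\langle y,z\rangle\in\Sigma$). Thus it suffices to show that $y'$ and $z'$ do not exist, and by the $y\leftrightarrow z$ symmetry of $r(Q)\colon x+y+z=x'$ I only need to rule out $y'$.

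Assume $y'$ exists, with associated primitive relation $y+y'=\delta$, $\delta\in G(\Sigma)$ (\cref{lemma: one true enemy}). The first step is to locate $\delta$; I claim $\delta\in P\cup\{z'\}$. Clearly $\delta\ne y,y'$. If $\delta=x$, then $y+y'=x$ is an extremal degree-$1$ primitive relation, and \cref{prop:extremal forms cones} applied to it with $\tau=\langle z\rangle$ (valid since $\langle x,z\rangle\in\Sigma$) gives $\langle x,y,z\rangle\in\Sigma$, contradicting that $\{x,y,z\}$ is a primitive collection; $\delta=z$ is ruled out identically with $\tau=\langle x\rangle$. If $\delta=x'$, subtracting $r(Q)$ from $y+y'=x'$ gives $y'=x+z\in\langle x,z\rangle$, so the ray $\langle y'\rangle$ is a face of the cone $\langle x,z\rangle$ and therefore $y'\in\{x,z\}$, a contradiction. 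By \cref{lemma: (b) first approximation of Sigma(1)}, the remaining possibilities are $\delta=z'$ and $\delta=x_k\in P$.

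In either remaining case, substituting $y=\delta-y'$ into $r(Q)$ produces a relation $x+z+\delta=x'+y'$ in which $x,z,\delta$ are pairwise distinct, $\{x,z,\delta\}\cap\{x',y'\}=\varnothing$, and $\langle x',y'\rangle\in\Sigma$ (the opponent of $x'$ is $x\ne y'$). I would then invoke \cref{lemma: when is eff deg 1 PC}. If $\{x,z,\delta\}$ is a primitive collection carrying this extremal degree-$1$ relation, then \cref{prop:extremal forms cones} applied to it with $\tau=\{0\}$ and $\delta$ deleted yields $\langle x,z,x',y'\rangle\in\Sigma$, whose face $\langle x,x'\rangle$ then lies in $\Sigma$ — impossible, as $\{x,x'\}$ is a primitive collection. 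Otherwise we are in case (b) of \cref{lemma: when is eff deg 1 PC}: since $x,z$ are not opponents, the opponent pair required there lies in $\{z,\delta\}$, forcing the remaining vector $x$ to coincide with one of $x',y'$, which is impossible because $x\notin\{x',y'\}$. This rules out $y'$ (and, symmetrically, $z'$), so $G(\Sigma)\subseteq P\cup\{x,y,z,x'\}$ and $\rho(X)\le3$.

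The delicate point will be the last paragraph: one must confirm that the substituted relation $x+z+\delta=x'+y'$ carries no hidden coincidence among its five generators so that \cref{lemma: when is eff deg 1 PC} applies cleanly, and must track the relabelling in case (b) carefully enough to see that it is $x$ — not $z$ or $\delta$ — that is pushed into the pair $\{x',y'\}$; the sub-case $\delta=x_k\in P$ also requires admitting a priori the possibility $x_k=z'$ before the contradiction closes.
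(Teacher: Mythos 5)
Your argument is correct and is essentially the paper's own proof: substitute $y=\delta-y'$ into $r(Q)$ to get $x+z+\delta=x'+y'$ with $\langle x',y'\rangle\in\Sigma$, apply \cref{lemma: when is eff deg 1 PC}, exclude case (b) because $x,z\notin\{x',y'\}$, and kill case (a) with \cref{prop:extremal forms cones} via the face $\langle x,x'\rangle$. Your preliminary paragraph locating $\delta$ is superfluous (the dichotomy lemma needs no information about where $\delta$ lies), and in case (b) the opponent pair could be either $\{x,\delta\}$ or $\{z,\delta\}$ --- not only the latter --- but both sub-cases are excluded for the same reason, since you have already shown $\{x,z\}\cap\{x',y'\}=\varnothing$.
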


\begin{proof}
We have $r(Q)\colon\,x+y+z=x'$. We know 
$G(\Sigma) \subset  P \cup \{x,y,z,x',y',z'\}$ by \cref{lemma: (b) first approximation of Sigma(1)}. So it is enough to show $y'$ and $z'$ do not exist. 
Suppose to the contrary that, for example,
$y' \in G(\Sigma)$, and let $\beta=y+y'$. Then 
\[\beta+x+z = x'+y'
\quad \text{and} \quad
\langle x', y' \rangle \in \Sigma
,\]
so again, by \cref{lemma: when is eff deg 1 PC}, this is an extremal primitive relation.
Indeed, assume the relation is not primitive. By \cref{lemma: when is eff deg 1 PC}, either $\beta$ and $x$ are opponents and $z\in\{x',y'\}$, or $\beta$ and $z$ are opponents and $x\in\{x',y'\}$.
But $z,x\notin\{x',y'\}$, since $\{x,y,z\}$ is a primitive collection. Hence $\beta+x+z =y'+x'$ is a primitive relation.
It follows from \cref{prop:extremal forms cones} that $\langle x, x' \rangle \in \Sigma$, which is a contradiction.
\end{proof}

\begin{Prop}
\label{proposition: (b) x+y+z = v}
Let $X$ be a projective toric Fano manifold of $\dim(X)=n \geq 5$, $\minRC(X) \geq 3$,
which admits a primitive relation
$r(P) \colon x_0+x_1+ \cdots + x_{n-2}=0$ \ref{item: assumption on X, dim X}---\ref{item: CSPR of length n-1}.
Assume that any primitive collection $\{x, y, z\}$ such that $x,y,z \in G(\Sigma) \setminus P$ and $0 \in \Conv (\pi(x), \pi(y), \pi(z))$ \ref{item: x y z not in S} ---\ref{item: 0 in convex hull of x y z} has degree $2$ \ref{item: no prim rel of degree 1}, and that there exists such a triple $x,y,z$ \ref{item: (b) primitive relation degree 2}.
Then $\rho(X) \leq 3$.
\end{Prop}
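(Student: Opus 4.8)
The plan is to assemble the statement directly from the lemmas of this subsection; no new geometric input is needed. The starting point is \cref{lemma: (b) first approximation of Sigma(1)}: under the standing hypotheses \ref{item: assumption on X, dim X}--\ref{item: 0 in convex hull of x y z} together with \ref{item: (b) primitive relation degree 2}--\ref{item: no prim rel of degree 1}, it already records the bound $G(\Sigma) \subset P \cup \{x,y,z,x',y',z',v'\}$ and, crucially, the dichotomy $v \in P \cup \{x',y',z'\}$. So it suffices to treat these two cases separately.

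In the case $v = x_m \in P$, I would simply invoke \cref{lemma: degree 2 bound 1}, which gives $\rho(X) \leq 3$ outright (its proof in turn uses \cref{lemma: (b.1) there is no xm'} to rule out the existence of $x_m'$). In the case $v \in \{x',y',z'\}$, I would first observe that all the hypotheses on the triple are symmetric under permuting $x$, $y$, $z$: membership in $G(\Sigma)\setminus P$ \ref{item: x y z not in S}, the condition $0 \in \Conv(\pi(x),\pi(y),\pi(z))$ \ref{item: 0 in convex hull of x y z}, the existence of the primitive relation $r(Q)$ \ref{item: (b) primitive relation degree 2}, and the absence of degree-$1$ triples \ref{item: no prim rel of degree 1} are all unchanged by relabeling. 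Hence, relabeling the triple $(x,y,z)$ if necessary, we may assume $v = x'$, and then \cref{lemma: degree 2 bound 2} yields $\rho(X) \leq 3$. This exhausts all cases.

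Since the substance of the argument was already carried out in \cref{lemma: (b) first approximation of Sigma(1)}, \cref{lemma: (b.1) there is no xm'}, \cref{lemma: degree 2 bound 1} and \cref{lemma: degree 2 bound 2}, I do not expect any genuine obstacle here. The only step that warrants a line of justification is the relabeling in the second case, i.e.\ verifying that replacing $(x,y,z)$ by a permutation leaves assumptions \ref{item: x y z not in S}--\ref{item: 0 in convex hull of x y z}, \ref{item: (b) primitive relation degree 2} and \ref{item: no prim rel of degree 1} intact — but this is immediate from their manifest symmetry in the three vectors.
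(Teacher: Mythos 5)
Your proposal is correct and follows exactly the paper's own argument: the paper's proof is a one-line citation of \cref{lemma: (b) first approximation of Sigma(1)}, \cref{lemma: degree 2 bound 1} and \cref{lemma: degree 2 bound 2}, with the same case split on $v \in P$ versus $v \in \{x',y',z'\}$. Your explicit remark on the symmetry justifying the relabeling to $v = x'$ is a harmless (and welcome) elaboration of a step the paper leaves implicit.
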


\begin{proof}
The statement follows from \cref{lemma: (b) first approximation of Sigma(1)}, \cref{lemma: degree 2 bound 1} and \cref{lemma: degree 2 bound 2}.
\end{proof}

\subsection{\texorpdfstring{Second case: none of $\{ x, y, z \}$}{\{x, y, z\}} form a primitive collection}
\label{subsection: none of x y z form a PC}

\begin{Prop} \label{prop: none PC}
Let $X$ be a toric Fano manifold of $\dim(X)=n \geq 5$, $\minRC(X) \geq 3$,
which admits a primitive relation
$r(P) \colon x_0+x_1+ \cdots + x_{n-2}=0$ \ref{item: assumption on X, dim X}---\ref{item: CSPR of length n-1}.
Assume in addition that none of the triples $\{x, y, z\} \subseteq G(\Sigma) \setminus P$ such that $0 \in \Conv (\pi(x), \pi(y), \pi(z))$ \ref{item: x y z not in S}---\ref{item: 0 in convex hull of x y z} form a primitive collection.
Then $\rho(X) \leq 3$.
\end{Prop}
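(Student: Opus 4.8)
Since the primitive relation $r(P)\colon x_0+\dots+x_{n-2}=0$ has $n-1$ terms, $|P|=n-1$, so writing $V:=G(\Sigma)\setminus P$ we have $\rho(X)=|G(\Sigma)|-n=|V|-1$. Hence the whole statement reduces to proving $|V|\le 4$. I first want to set up the picture downstairs. The cones $\langle P\setminus\{x_j\}\rangle$, $j=0,\dots,n-2$, are smooth cones of $\Sigma$ whose union is $\Span P$ (this uses $r(P)$, \ref{item: CSPR of length n-1}: subtracting a suitable multiple of the relation makes the coordinates of any point of $\Span P$ nonnegative with a zero), so any $w\in G(\Sigma)$ with $\pi(w)=0$ would generate a ray of one of these simplicial cones and therefore lie in $P$; thus $\pi$ maps $V$ into $\QQ^2\setminus\{0\}$, and by completeness of $\Sigma$ the set $\pi(V)$ positively spans $\QQ^2$. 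By \cref{lemma: x y z not a cone}, no triple $\{v_1,v_2,v_3\}\subseteq V$ with $0\in\Conv(\pi(v_1),\pi(v_2),\pi(v_3))$ spans a cone of $\Sigma$; by the hypothesis of the proposition such a triple is not a primitive collection, so one of its pairs fails to span a cone, i.e.\ consists of opponents in the sense of \cref{definition: enemy}. Combining this with \cref{lemma: one true enemy}: each $v\in V$ has at most one opponent, and if $\{u,u'\}$ are opponents then $u+u'=w$ for a single $w\in G(\Sigma)$ with $w\neq 0$ by \ref{item: m >= 3}, so $\pi(u)+\pi(u')=\pi(w)\in\pi(V)\cup\{0\}$, with $w\in P$ exactly when $\pi(u)=-\pi(u')$. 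This last fact — that the sum of an opponent pair is itself a ray of the fan — is the constraint that will do the work.

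\medskip
Now assume for contradiction that $|V|\ge 5$. Using the generators $x,y,z\in G(\Sigma)\setminus P$ with $0\in\Conv(\pi(x),\pi(y),\pi(z))$ provided by \ref{item: x y z not in S}--\ref{item: 0 in convex hull of x y z}, relabel so that $\{x,y\}$ is the opponent pair inside $\{x,y,z\}$, and write $x+y=w$. The plan is to propagate constraints to every other generator by means of \cref{remark: plane geometry}: for each $u\in V$, the convex hull of $\pi(u)$ with the images of two of $x,y,z$ contains $0$, so that triple contains an opponent pair; since $x'=y$ and $z$ has at most one opponent $z'$, the only way this can happen for $u\notin\{x,y,z,z'\}$ is that $0\in\Conv(\pi(u),\pi(x),\pi(y))$. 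I would then split on the location of $w$. If $w\in V$, then $\pi(x),\pi(y)$ are not antipodal, and $0\in\Conv(\pi(u),\pi(x),\pi(y))$ forces $\pi(u)\in-(\RR_{\ge0}\pi(x)+\RR_{\ge0}\pi(y))$, a pointed cone of angular width $<\pi$; so at most four of the rays of $\pi(V)$ escape this cone, while the generators inside it, together with their possible opponents and the forced directions $\pi(u)+\pi(u')$, cannot make $\pi(V)$ positively span $\QQ^2$ once there are $\ge 5$ of them — this is where the ``opponent-sum is a ray'' constraint is used to eliminate the configurations. This contradicts $|V|\ge 5$, giving $|V|\le 4$ and hence $\rho(X)\le 3$. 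Together with \cref{proposition: (a) x+y+z = xi+xj} and \cref{proposition: (b) x+y+z = v} this finishes \cref{theorem: P n-2 implies rho<=3}, and by \cref{rmk_rho<=3} it confirms that $\PP^n$ is the only toric $2$-Fano manifold with $\minRC(X)\ge n-2$.

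\medskip
The delicate point — which I expect to be the main obstacle — is the remaining sub-case $w=x+y\in P$, i.e.\ $\pi(x)=-\pi(y)$. Here the confinement argument above is vacuous, since $0$ already lies on the segment $[\pi(x),\pi(y)]$ and every triple containing $x$ and $y$ automatically surrounds the origin. In that situation I would instead pass to $V\setminus\{x,y\}$: its images still positively span $\QQ^2$, every origin-surrounding triple among them still contains an opponent pair, and one iterates the same analysis, now using the sum-of-opponents constraint to bound the number of antipodal opponent pairs and to pin down where their sums land, until one is again reduced to $|V|\le 4$. Making this iteration bottom out cleanly, and keeping track of the finitely many borderline angular configurations (where some arc between consecutive rays of $\pi(V)$ has length exactly $\pi$), is the genuinely fiddly part of the argument; everything else is bookkeeping of opponents exactly as in \cref{subsection: x y z form a PC}.
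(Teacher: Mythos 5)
Your reductions are sound as far as they go: $\rho(X)=|V|-1$ with $V=G(\Sigma)\setminus P$, the images $\pi(v)$ for $v\in V$ are nonzero and positively span $\QQ^2$, every origin\nobreakdash-surrounding triple in $V$ must contain an opponent pair, and the sum of an opponent pair is a single generator (\cref{lemma: one true enemy}); this is essentially the content of the paper's \cref{lemma: none PC implies z is an opponent of x or y}. But the heart of the proposition --- actually deriving a contradiction from $|V|\ge 5$ --- is not carried out in either of your branches. In the branch $w=x+y\in V$, your stated obstruction is wrong: the claim that the generators confined to $-C$, where $C=\langle\pi(x),\pi(y)\rangle$, ``cannot make $\pi(V)$ positively span $\QQ^2$ once there are $\ge 5$ of them'' fails, because $\{\pi(x),\pi(y)\}$ together with a single vector in the interior of $-C$ already positively spans the plane, so completeness is compatible with arbitrarily many generators in $-C$; moreover the hypothesis imposes no direct constraint on such a generator $u$, since the origin-surrounding triple $\{u,x,y\}$ already contains the opponent pair $\{x,y\}$. (There is also a sub-case you miss: if $\pi(y)$ is a negative multiple of $\pi(x)$ with $\pi(x)+\pi(y)\neq 0$, then $-C$ is a full line, not a pointed cone of width $<\pi$, so even the confinement step collapses.) Ruling out two non-opponent generators $u_1,u_2$ in $-C$ requires exhibiting an origin-surrounding triple containing both of them --- e.g.\ $\{u_1,u_2,w\}$ --- and analyzing its opponent pair; that is exactly the case analysis you defer with ``this is where the constraint is used.'' In the branch $w\in P$ you explicitly leave the iteration unfinished. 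As written, this is a plan with the decisive steps missing, not a proof.

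For contrast, the paper avoids anchoring the argument on an opponent pair. It chooses $x,y$ with $\langle x,y\rangle\in\Sigma$ (so \emph{not} opponents) whose image cone $\langle\pi(x),\pi(y)\rangle$ is maximal among all cone-spanning pairs. Then any generator whose image lies outside this cone is forced to be $x'$ or $y'$, by \cref{lemma: none PC implies z is an opponent of x or y} combined with maximality (and \cref{lemma: x y z not a cone} for images on the boundary rays); and the cone $\langle\pi(x),\pi(y)\rangle$ itself is covered by the anti-cones $\langle-\pi(a),-\pi(b)\rangle$ of cone-spanning pairs $\{a,b\}\subset\{x,y,x',y'\}$, so the same lemma pins every remaining generator into $\{x,y,x',y'\}$, giving $G(\Sigma)=P\cup\{x,y,x',y'\}$ in one stroke. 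That single covering argument replaces both of your branches and needs no iteration; if you want to salvage your approach, you should switch your anchor from an opponent pair to a maximal cone-spanning pair in this way.
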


Before proving \cref{prop: none PC}, we will formulate the following useful lemma.

\begin{Lem}
\label{lemma: none PC implies z is an opponent of x or y}
    In the setting of \cref{prop: none PC}, take any triple $\{x, y, z\} \subseteq G(\Sigma) \setminus P$ with $\langle x, y \rangle \in \Sigma$. Assume that $0 \in \Conv (\pi(x), \pi(y), \pi(z))$, or equivalently, $\pi(z) \in \langle -\pi(x), -\pi(y) \rangle$. Then $z = x'$ or $z = y'$.
\end{Lem}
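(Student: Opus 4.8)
The plan is a short argument by contradiction that unwinds the definition of a primitive collection; throughout, $x$, $y$, $z$ are understood to be pairwise distinct, as the set notation $\{x,y,z\}$ indicates (this is needed: for $z\in\{x,y\}$ the statement would clash with the hypothesis $\langle x,y\rangle\in\Sigma$). Suppose that $z$ is an opponent of neither $x$ nor $y$. By \cref{definition: enemy} this means $\langle x,z\rangle\in\Sigma$ and $\langle y,z\rangle\in\Sigma$; together with the hypothesis $\langle x,y\rangle\in\Sigma$, it follows that every two-element subset of $\{x,y,z\}$ --- as well as $\{x\}$, $\{y\}$, $\{z\}$ and $\varnothing$ --- generates a cone of $\Sigma$.

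The one remaining ingredient is that $\langle x,y,z\rangle\notin\Sigma$, which is exactly the content of \cref{lemma: x y z not a cone}. That lemma is phrased for the distinguished triple of \ref{item: x y z not in S}---\ref{item: 0 in convex hull of x y z}, but its proof uses only that $\{x,y,z\}\subseteq G(\Sigma)\setminus P$ and that $0\in\Conv(\pi(x),\pi(y),\pi(z))$ --- both valid for the triple at hand --- so it applies without change. (The mechanism: a nonnegative combination of $x$, $y$, $z$ with vanishing $\pi$-image is, after adding a suitable multiple of $r(P)$, a nonnegative combination of $P\setminus\{x_j\}$ for some $j$; were $\langle x,y,z\rangle$ a cone, this vector would lie in the intersection of two cones with disjoint generating sets, hence be $0$, contradicting strong convexity.)

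Combining the two observations, $\{x,y,z\}$ fails to generate a cone of $\Sigma$ while every proper subset of it does, so $\{x,y,z\}$ is a primitive collection contained in $G(\Sigma)\setminus P$ with $0\in\Conv(\pi(x),\pi(y),\pi(z))$, contradicting the standing hypothesis of \cref{prop: none PC}. Hence $\langle x,z\rangle\notin\Sigma$ or $\langle y,z\rangle\notin\Sigma$; since $\minRC(X)\geq 3>1$, each generator has at most one opponent by \cref{lemma: one true enemy}, so this is precisely the assertion that $z=x'$ or $z=y'$. The argument has no serious obstacle; the only step deserving a word is the verification that \cref{lemma: x y z not a cone} is genuinely a statement about every triple in $G(\Sigma)\setminus P$ meeting the convexity condition, which is what makes the conclusion immediate.
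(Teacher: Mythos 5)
Your proposal is correct and follows essentially the same route as the paper: combine \cref{lemma: x y z not a cone} with the standing hypothesis that no such triple is a primitive collection to force some pair not to span a cone, which by $\langle x,y\rangle\in\Sigma$ and \cref{lemma: one true enemy} must be $\{x,z\}$ or $\{y,z\}$. Your extra check that \cref{lemma: x y z not a cone} applies to an arbitrary triple satisfying \ref{item: x y z not in S}--\ref{item: 0 in convex hull of x y z} is a worthwhile clarification the paper leaves implicit, but it does not change the argument.
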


\begin{proof}
By \cref{lemma: x y z not a cone}, $x$, $y$, $z$ do not span a cone, and by assumption, they do not form a primitive collection. So two of the vectors must not form a cone. Since we assumed that $\langle x, y \rangle \in \Sigma$, we must have $z = x'$ or $z = y'$.
\end{proof}
    
\begin{proof}[Proof of \cref{prop: none PC}]
We select $x,y \in G(\Sigma)$ with $\langle x,y\rangle \in \Sigma$ and such that the cone generated by $\pi(x)$ and $\pi(y)$ is maximal among cones in $\QQ^2 \simeq \QQ^n / \Gamma$ coming from such pairs. If there is $z\in G(\Sigma)$ such that $\pi(z)$ is outside the cone
$\langle \pi(x), \pi(y) \rangle$, then
we show that $z=x'$ or $z=y'$.
Indeed, the case $\pi(z) \in \langle -\pi(x), -\pi(y) \rangle$ is covered by \cref{lemma: none PC implies z is an opponent of x or y}; $\pi(z) \in \langle \pi(x) \rangle$ or $\pi(z) \in \langle \pi(y) \rangle$ cannot happen by an argument similar to \cref{lemma: x y z not a cone}; 
and in the remaining case, $\pi(z)$ is in $\langle \pi(x), -\pi(y) \rangle$ or $\langle -\pi(x),\pi(y)\rangle$, then we use maximality of the cone generated by $\pi(x)$ and $\pi(y)$.

Let $v$ be such that $\pi(v)$ is in the open half plane determined by $\Span \pi(x)$ not containing $\pi(y)$. Up to relabelling of $x$, $y$, we can assume that such a $v$ exists.
If $v=x'$, then $x+x'=y'$, since $\pi(x+x')$ is non-zero and is outside of $\langle \pi(y),\pi(x) \rangle.$ So 
\[ 
    \langle \pi(x), \pi(y) \rangle \subset
    \langle -\pi(y), -\pi(x') \rangle \cup
    \langle -\pi(x'), -\pi(y') \rangle \cup
    \langle -\pi(y'), -\pi(x) \rangle .
\]
It follows from 
\cref{lemma: none PC implies z is an opponent of x or y}
that $G(\Sigma) = P\cup\{x,y,x',y'\}$, which concludes this case. 

If now $v=y'$, in case $\pi(v) \in \langle -\pi(x), -\pi(y) \rangle$, we notice that $y+y'=x'$ as above, and in case $\pi(v) \in \langle \pi(x), -\pi(y) \rangle$, by completeness,
we find $w$ such that $\pi(w) \in \langle -\pi(x), \pi(y) \rangle \cup \langle -\pi(x), -\pi(y) \rangle$ and notice $w=x'$. In either case, we get
\[  \QQ^2 = 
    \langle -\pi(x), -\pi(y) \rangle \cup
    \langle -\pi(y), -\pi(x') \rangle \cup
    \langle -\pi(x'), -\pi(y') \rangle \cup
    \langle -\pi(y'), -\pi(x) \rangle ,
\]
and now \cref{lemma: none PC implies z is an opponent of x or y}
gives $G(\Sigma) = P\cup\{x,y,x',y'\}$.
\end{proof}

\section{Toric Fano manifolds with $\minRC(X)=n-2$}\label{sec:m=n-2}

In this section, we classify all $n$-dimensional toric Fano manifolds $X$ with $\minRC(X)=n-2$ and $n\geq 6$ (\cref{thm:classification_m>=n-2}). By \cref{theorem: P n-2 implies rho<=3}, we know that $\rho(X) \leq 3$.
\cref{decomposition-rank 2} and \cref{decomposition-rank 3} classify 
$n$-dimensional toric Fano manifolds $X$ with $\minRC(X)=n-2$, $n\geq 5$, and Picard rank $\rho(X)=2$ and $3$, respectively. 
Together, these results yield a classification of toric Fano manifolds with $\minRC(X)=n-2$ and $n \geq 6$.

\begin{Thrm}\label{decomposition-rank 2} 
Let $X$ be a toric Fano manifold of dimension $n \geq 5$, $\minRC(X) = n-2$ and $\rho(X) = 2$. Then $X\simeq\PS_{\PS^2}(\mathcal{E})$, where $\mathcal{E}$ is one of the following vector bundles on $\PS^2$:
\begin{itemize}
    \item $\mathcal{E}= \mathcal{O}_{\PS^2}(1) \oplus \mathcal{O}_{\PS^2}^{\oplus n-2} $,
    \item $\mathcal{E}= \mathcal{O}_{\PS^2}(1) \oplus \mathcal{O}_{\PS^2}(1) \oplus \mathcal{O}_{\PS^2}^{\oplus n-3} $,
    \item $\mathcal{E}= \mathcal{O}_{\PS^2}(2) \oplus \mathcal{O}_{\PS^2}^{\oplus n-2} $.
\end{itemize}
\end{Thrm}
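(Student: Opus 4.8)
The plan is to first identify the coarse geometry of $X$ and then read off the admissible bundles $\mathcal{E}$ from the two primitive relations. Since $\rho(X)=2$, Kleinschmidt's classification of smooth projective toric varieties of Picard rank $2$ gives $X \simeq \PS_{\PS^r}\big(\mathcal{O}_{\PS^r}\oplus\mathcal{O}_{\PS^r}(a_1)\oplus\cdots\oplus\mathcal{O}_{\PS^r}(a_s)\big)$ with $r+s=n$, $r,s\geq 1$ and $0\leq a_1\leq\cdots\leq a_s$. (One can also stay inside the paper's framework: a rank-$2$ fan has exactly two primitive collections, so for the centrally symmetric $P$ of order $n-1$ furnished by $\minRC(X)=n-2$ one checks directly that $\mathcal{E}_P=\varnothing$ in \cref{not:no proj bundle}; then by \cref{prop:Pk bundle} $X$ is a global $\PS^{n-2}$-bundle over a toric surface $S$, and $\rho(S)=\rho(X)-1=1$ forces $S\simeq\PS^2$, so $X\simeq\PS_{\PS^2}(\mathcal{E})$ with $\mathcal{E}$ a split rank-$(n-1)$ bundle normalized as above and $r=2$.)

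\textbf{Main steps.} In the split model, $\Sigma_X$ has precisely two primitive collections: the \emph{fiber} collection $\{v_0,\dots,v_s\}$, with relation $v_0+\cdots+v_s=0$, which is centrally symmetric of order $s+1$; and the \emph{base} collection $\{u_0,\dots,u_r\}$, with relation $u_0+\cdots+u_r=a_1v_1+\cdots+a_sv_s$, of order $r+1$, which is centrally symmetric precisely when $a_1=\cdots=a_s=0$ (i.e. when $X=\PS^r\times\PS^s$). Their degrees are $s+1>0$ and $r+1-\sum a_i$, so, since $X$ is Fano and therefore all primitive relations have positive degree, we get $\sum a_i\leq r$. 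Now impose $\minRC(X)=n-2$. If some $a_i>0$, the fiber collection is the unique centrally symmetric one, so $\minRC(X)=s$; the hypothesis forces $s=n-2$ and $r=2$, hence $\mathcal{E}$ has rank $n-1$ over $\PS^2$ with $1\leq\sum a_i\leq 2$. The only nondecreasing tuples of nonnegative integers $(a_1,\dots,a_{n-2})$ with sum $1$ or $2$ are $(0,\dots,0,1)$, $(0,\dots,0,2)$ and $(0,\dots,0,1,1)$, which give exactly the three bundles in the statement. If instead all $a_i=0$, then $X=\PS^r\times\PS^s$ has two centrally symmetric primitive collections, of orders $r+1$ and $s+1$, so $\minRC(X)=\min(r,s)$; then $\min(r,s)=n-2=r+s-2$ forces $\min(r,s)=2$ and $n\leq 4$, contradicting $n\geq 5$. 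This exhausts all possibilities.

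\textbf{Main obstacle.} There is no delicate content here; the only points that require care are the bookkeeping of the centrally symmetric primitive collections of $\PS_{\PS^r}(\mathcal{E})$ in terms of the twists $a_i$ — which relation is centrally symmetric, and of which order — and the small numerical argument that uses $n\geq 5$ both to rule out the product $\PS^r\times\PS^s$ and to pin $r=2$.
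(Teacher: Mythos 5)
Your proposal is correct and follows essentially the same route as the paper: invoke Kleinschmidt's classification to write $X$ as a split projective bundle over a projective space, use $\minRC(X)=n-2$ to force a $\PS^{n-2}$-bundle over $\PS^2$, use the Fano condition (positivity of the degree of the base primitive relation) to get $\sum a_i\leq 2$, and enumerate, discarding the product case via $\minRC(\PS^r\times\PS^s)=\min(r,s)\leq 2<n-2$. Your write-up is somewhat more explicit than the paper's about which primitive collection is centrally symmetric and why the base must be $\PS^2$, but the argument is the same in substance.
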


\begin{proof}
Let $X$ be an $n$-dimensional toric Fano manifold with $\minRC(X)=n-2$ and $\rho(X)=2$.
We recall that toric manifolds with Picard rank $2$ are classified by \cite{Kleinschmidt1988}: they are projective space bundles over projective spaces.
The assumption $\minRC(X)=n-2$ implies that 
$X$ is a $\PS^{n-2}$-bundle over $\PS^2$.

We can write $X=\PS(\mathcal{E})$, where $\mathcal{E}= \mathcal{O}_{\PS^2}(a_0) \oplus \mathcal{O}_{\PS^2}(a_1)  \oplus \dots \oplus \mathcal{O}_{\PS^2}(a_{n-2})$ and $a_0 \geq a_1 \geq \dots \geq a_{n-2} = 0$. The Fano assumption on $X$ is equivalent to saying that $\sum_{i=0}^{n-2} a_i \leq 2$. 
Thus we have the following cases:
\begin{enumerate}
    \item $\mathcal{E}= \mathcal{O}_{\PS^2}^{\oplus n-1} $ and $X \simeq \PS^{n-2} \times \PS^2$,
    \item $\mathcal{E}= \mathcal{O}_{\PS^2}(1) \oplus \mathcal{O}_{\PS^2}^{\oplus n-2} $,
    \item  $\mathcal{E}= \mathcal{O}_{\PS^2}(1) \oplus \mathcal{O}_{\PS^2}(1) \oplus \mathcal{O}_{\PS^2}^{\oplus n-3} $,
    \item  $\mathcal{E}= \mathcal{O}_{\PS^2}(2) \oplus \mathcal{O}_{\PS^2}^{\oplus n-2} $.
\end{enumerate} 
In Case (1), we have $\minRC(X)=2$.
Cases (2)---(4) provide the complete list of toric Fano manifolds of Picard rank $2$ and $\minRC(X)=n-2$.
\end{proof}

\begin{Thrm}\label{decomposition-rank 3}
Let $X$ be a toric Fano manifold of dimension $n \geq 5$, $\minRC(X) = n-2$ and $\rho(X) = 3$. Then one of the following holds:        \begin{enumerate}
        \item  $X=\PS_S(\mathcal{E})$ is a $\PS^{n-2}$-bundle over a toric surface $S$, where $(S,\mathcal{E})$ is one of the following: 
                    \begin{itemize}
                     \item $S=\PS^1 \times \PS^1$ and $\mathcal{E}= \mathcal{O}_{\PS^1 \times \PS^1}(1,1) \oplus \mathcal{O}_{\PS^1 \times \PS^1}^{\oplus n-2} $,
                     \item $S=\PS^1 \times \PS^1$ and $\mathcal{E}= \mathcal{O}_{\PS^1 \times \PS^1}(1,0) \oplus \mathcal{O}_{\PS^1 \times \PS^1}(0,1) \oplus \mathcal{O}_{\PS^1 \times \PS^1}^{\oplus n-3} $,
                     \item $S=\F_1$ and $\mathcal{E}= \mathcal{O}_{\F_1}(e+f) \oplus \mathcal{O}_{\F_1}^{\oplus n-2}$, where $e\subset \F_1$ is the $-1$-curve, and $f\subset \F_1$ is a fiber of $\F_1\to \PS^1$.
                    \end{itemize} 
        \item Let $Y\simeq \PS_{\PS^2}\big(\mathcal{O}_{\PS^2}(1) \oplus \mathcal{O}_{\PS^2}^{\oplus n-2}\big)$ be the blowup of $\PP^n$ along a linear subspace $L=\PP^{n-3}$, and denote by $E\subset Y$ the exceptional divisor. 
        Then $X$ is the blowup of $Y$ along a codimension $2$ center $Z\subset Y$, where:
                \begin{itemize}
                     \item $Z$ is the intersection of $E$ with the strict transform of a hyperplane of $\PP^n$ containing the linear subspace $L$, or
                     \item $Z$ is the intersection of the strict transforms of two hyperplanes of $\PP^n$, one containing the linear subspace $L$, and the other one not containing it. 
                \end{itemize}
\end{enumerate}
\end{Thrm}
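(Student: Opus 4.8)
The plan is to feed the uniqueness of the centrally symmetric primitive collection into Batyrev's explicit description of Picard rank $3$ toric manifolds (\cref{thm:Batyrev result rho=3}) and to read off its two branches ($l=3$ and $l=5$) as projective bundles and blowups, respectively. First, since $\minRC(X)=n-2\ge 3$, every centrally symmetric primitive collection of $\Sigma_X$ has order $\ge n-1$; by \cref{lemma: not too many CSPR}, two distinct ones $P,Q$ would satisfy $|P|+|Q|\le n+2$, hence $2(n-1)\le n+2$ and $n\le 4$. So for $n\ge 5$ there is a unique centrally symmetric primitive collection $P=\{x_0,\dots,x_{n-2}\}$, with $r(P)\colon x_0+\cdots+x_{n-2}=0$, and $|G(\Sigma_X)|=n+3$, so the remaining four rays are distributed among Batyrev's pieces. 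Now invoke \cref{thm:Batyrev result rho=3}: $X$ has either $l=3$ or $l=5$ primitive collections.

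\textbf{The case $l=3$.} Here $X_0,X_1,X_2$ are exactly the primitive collections, each with extremal primitive relation, and one of them is $P$; since a primitive collection has at least two elements and $|X_0|+|X_1|+|X_2|=n+3$, we may take $P=X_0$ and then $|X_1|=|X_2|=2$. A direct inspection of \cref{not:no proj bundle} using that there are no further primitive collections shows $\mathcal E_P=\varnothing$, so by \cref{prop:Pk bundle} the variety $X$ is itself a $\PS^{n-2}$-bundle $\PS_B(\mathcal E)$ over a smooth projective toric surface $B$ with $\rho(B)=2$; thus $B\in\{\PS^1\times\PS^1,\F_a\}$. Writing $\mathcal E$ as a normalized split bundle, I impose the Fano condition via Batyrev's degree criterion together with the requirement $\minRC(X)=n-2$, i.e. that the lift of every primitive relation of $B$ and every ``mixed'' candidate triple be non-centrally-symmetric — equivalently that $\mathcal E$ twist the class of each centrally symmetric primitive relation of $B$. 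Hirzebruch surfaces $\F_a$ with $a\ge 2$ are ruled out because the relation $v_1+v_2=au$ of $\F_a$ lifts to one of nonpositive degree; for $B=\PS^1\times\PS^1$ the ``twist in both directions'' plus Fano conditions leave exactly $\mathcal O(1,1)\oplus\mathcal O^{\oplus n-2}$ and $\mathcal O(1,0)\oplus\mathcal O(0,1)\oplus\mathcal O^{\oplus n-3}$, and for $B=\F_1$ exactly $\mathcal O_{\F_1}(e+f)\oplus\mathcal O^{\oplus n-2}$. This is part~(1).

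\textbf{The case $l=5$.} In Batyrev's list the relation $r_2\colon\overline 1\cdot X_2+\overline 1\cdot X_3=0$ is always centrally symmetric, so by uniqueness $P=X_2\sqcup X_3$ and $|X_2|+|X_3|=n-1$, hence $|X_0|+|X_1|+|X_4|=4$ with each piece nonempty; uniqueness also forces $r_4=\overline c\cdot X_2+\overline b\cdot X_3\ne 0$, i.e. $(\overline b,\overline c)\ne(\overline 0,\overline 0)$ for $n\ge 6$ (for $n=5$ the degenerate subcase, where $X_4\sqcup X_0$ would be a second centrally symmetric collection, is disposed of by an explicit check). Using positivity of the degrees of the extremal relations $r_0,r_1,r_3$ (Fano) together with \cref{prop:extremal forms cones}, I pin down $|X_0|,|X_1|,|X_4|$ and $\overline b,\overline c$, and find that one of $r_1,r_3$ is an order-$2$, degree-$1$ relation $w_1+w_2=z$. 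By \cref{prop:PC_blowdown} the associated extremal contraction $f\colon X\to Y$ is the blowup of $Y$ along the codimension-$2$ orbit closure $V(w_1,w_2)$, with $\rho(Y)=2$; recognizing $\Sigma_Y$ (remove the ray $z$) via Kleinschmidt's classification \cite{Kleinschmidt1988} identifies $Y\simeq\PS_{\PS^2}(\mathcal O_{\PS^2}(1)\oplus\mathcal O_{\PS^2}^{\oplus n-2})=\Bl_{\PS^{n-3}}\PS^n$. The two possible positions of $\{w_1,w_2\}$ relative to the exceptional divisor $E\subset Y$ — whether one of $w_1,w_2$ is the exceptional ray of $Y\to\PS^n$ or neither is — give the two descriptions of the center $Z$ in part~(2).

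I expect the main obstacle to be the bookkeeping in the $l=5$ case: extracting the exact sizes of the $X_i$ and the vectors $\overline b,\overline c$ from the Fano inequalities, and then matching the resulting fans with the blowups of $\Bl_{\PS^{n-3}}\PS^n$ described geometrically in part~(2) (together with the $n=5$ degeneracies). The converse direction — that each of the eight varieties of \cref{thm:classification_m>=n-2} is Fano of Picard rank $3$ with $\minRC=n-2$ — is then a direct computation of primitive collections and their degrees for each fan, which can be corroborated against the low-dimensional Fano database for small $n$; combining this with \cref{decomposition-rank 2} and the rank bound \cref{theorem: P n-2 implies rho<=3} yields \cref{thm:classification_m>=n-2}.
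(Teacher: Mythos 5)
Your proposal follows essentially the same route as the paper: uniqueness of the centrally symmetric primitive collection via \cref{lemma: not too many CSPR}, Batyrev's structure theorem with the case split $l=3$ versus $l=5$, identification of $X$ as a $\PS^{n-2}$-bundle over a rank-$2$ toric surface in the first case, and in the second case the Fano degree inequalities pinning down the $|X_i|$ and $(\overline b,\overline c)$ followed by contracting the extremal order-$2$, degree-$1$ relation (which is $r_3$, not ``one of $r_1,r_3$'') to land on $\Bl_{\PP^{n-3}}\PP^n$. The only superfluous hedge is the special treatment of $n=5$ in the degenerate subcase $(\overline c,\overline b)=(\overline 0,\overline 0)$: the inequality $2(n-1)\leq n+2$ already yields $n\leq 4$ there, exactly as in the paper.
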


\begin{proof}
We apply Batyrev's classification of toric manifolds with $\rho(X)=3$, stated in \cref{thm:Batyrev result rho=3}. We adopt the same notation as in \cref{thm:Batyrev result rho=3}, and treat separately the cases when the number of primitive collections is $l=3$ and $l=5$. 

\medskip

\paragraph{Case $l=3$.} As $G(\Sigma)=X_0 \sqcup X_1 \sqcup X_2$ and $\minRC (X)=n-2$, we have $X_0=\{x_0, \dots, x_{n-2}\}$, $X_1=\{v_1, v_2\}$ and $X_2 =\{z_1, z_2\}$. The corresponding
primitive relations are all extremal by \cref{thm:Batyrev result rho=3}. 
By \cref{prop:Pk bundle}, $X$ is a $\PS^{n-2}$-bundle over a surface $S$. Up to relabelling in $X_0$, $X_1$ and $X_2$,
three possible choices for the remaining two primitive relations are (noting that $\minRC(X) = n-2 > 1$):
\[
\begin{cases}
v_1+v_2=x_0, \\
z_1+z_2=v_1;
\end{cases} \quad \hspace{12pt} \begin{cases}
v_1+v_2=x_0, \\
z_1+z_2=x_0;
\end{cases} \quad \quad \begin{cases}
v_1+v_2=x_0, \\
z_1+z_2=x_1.
\end{cases}
\]

It follows that $S$ is isomorphic to $\mathbb{F}_1$ when $r(X_1)$ and $r(X_2)$ are as in the first column above, or to $\PS^1 \times \PS^1$ in two other cases.

\medskip

\paragraph{Case $l=5$.} We denote by $l_i= |X_i| \geq 1$ the cardinality of $X_i$.

If $(\overline c,\overline b)=(\overline 0, \overline 0)$, then both $r_2 \colon \overline 1\cdot X_2 + \overline 1\cdot X_3=0$ and $r_4 \colon \overline 1\cdot X_4+\overline 1\cdot X_0=0$ are centrally symmetric primitive relations. 
By the assumption $\minRC(X)=n-2$,  we have 
$2n-2 \leq l_2+l_3+l_4+l_0  \leq |G(\Sigma)|-1=n+2$,
which implies $n \leq 4$, a contradiction.

So we have $(\overline c,\overline b)\neq (\overline 0,\overline 0)$, and $P \coloneqq X_2 \sqcup X_3$ is the only centrally symmetric primitive collection, so $l_2 + l_3 = n-1$ and $l_0 + l_1 + l_4 = 4$, with $l_0, l_1, l_4 \in \{1,2\}$.
As $\deg(r_0)>0$, we get the inequality
\[ 3 \geq 
l_0 + l_1 > 
\sum c_i + \sum b_j + l_3,
\]
which is only satisfied when $l_3 = 1$, $l_0+l_1 = 3$ and exactly one entry in \[(c_1, c_2, \dots, c_{l_2}, b_1)\] equals one, while the others are all zero. 
Up to relabelling, there are two cases: $c_1=1$ or $b_1=1$.

We then have $l_4 = 4 - (l_0 + l_1) = 1$.
From $\deg(r_3)>0$, we get
\[ l_3 + l_4 > l_1,
\]
which means $2>l_1$, and this ensures $l_1=1$, hence $(l_0,l_1,l_4) = (2,1,1)$.

To sum up, we denote $X_0=\{v_1, v_2\}$, $X_1=\{y\}$, $X_2 =\{x_1, \dots, x_{n-2}\}$, $X_3=\{x_{0}\}$ and $X_4=\{z\}$, and get the following two possibilities for $X$:
\begin{itemize}
    \item $b_1=1$ and $c_j=0$  for every $j$: 
    \begin{align*}
        r_0 &\colon\quad
            v_1 + v_2 + y = 2 x_{0},\\
        r_1 &\colon\quad
            y + x_1 + \cdots + x_{n-2} = z,\\
        r_2 &\colon\quad
            x_0 + \cdots + x_{n-2} = 0,\\
        r_3 &\colon\quad
            x_{0} + z = y,\\
        r_4 &\colon\quad
            z + v_1 + v_2 = x_{0}.
    \end{align*}
    \item $c_1=1$, $b_1=0$ and $c_j=0$ for every $j>1$:
    \begin{align*}
        r_0 &\colon\quad
            v_1 + v_2 + y = x_1 + x_{0},\\
        r_1 &\colon\quad
            y + x_1 + \cdots + x_{n-2} = z,\\
        r_2 &\colon\quad
            x_0 + \cdots + x_{n-2} = 0,\\
        r_3 &\colon\quad
            x_{0} + z = y,\\
        r_4 &\colon\quad
            z + v_1 + v_2 = x_1.
    \end{align*}
\end{itemize}

By \cref{prop:Pk bundle}, the open $U=X \setminus \big( V( y ) \cup V( z ) \big)$ has a $\PS^{n-2}$-bundle structure. The relation $r_3$ corresponds to an extremal curve class by \cref{prop:deg1 extremal}, which induces a  smooth blow-down $h\colon X \rightarrow Y$. By \cref{prop:PC_blowdown}, the primitive relations in $Y$ in cases $b_1=1$ and $c_1=1$  are, respectively:
\[
\begin{cases}
x_0 + \cdots + x_{n-2}=0, \\
z+v_1+v_2=x_0;
\end{cases}
\quad \quad \begin{cases}
x_0 + \cdots + x_{n-2}=0, \\
z+v_1+v_2=x_1.
\end{cases} 
\]
It follows that $Y\simeq \PS\left( \mathcal{O}_{\PS^2}(1) \oplus \mathcal{O}_{\PS^2}^{\oplus n-2} \right)$, i.e., $Y$ is the blowup of $\PP^n$ along a linear subspace $L=V(z,v_1,v_2)\simeq \PP^{n-3}\subset \PP^n$, with exceptional divisor $E=V(x_0)\subset Y$ in the first case, and $E=V(x_1)\subset Y$ in the second case.
The center of the blowup $h\colon X \rightarrow Y$ is $V(x_0, z)\subset Y$, yielding the two varieties described in (2).
\end{proof}
\smallskip

\appendix

\section{Code for computing primitive collections}\label{app}
\smallskip
\begin{center}by \textsc{Will Reynolds}\end{center}
\medskip

Let $\Sigma$ be the fan of a toric manifold of dimension $n$. 
For each integer $k\in\{1,\dots, n\}$, we denote by $\Sigma(k)$ the subset of $\Sigma$ consisting of the $k$-dimensional cones.
To determine whether a given subset $P\subseteq G(\Sigma)$ is a primitive collection, we proceed in two steps. 
First make sure there does not exist $\sigma\in\Sigma(n)$ with $P\subseteq G(\sigma),$ and if there does, stop; otherwise, for each $v\in P$, make sure there exist $\sigma\in\Sigma(n)$ with $P\setminus\{v\}\subseteq G(\sigma)$. 
In the special case $\lvert P \rvert = 2$ it suffices to check that there does not exist $\sigma\in\Sigma(n)$ with $P\subseteq G(\sigma)$.

Note that, by definition, if $P$ is a primitive collection and $P\subseteq Q,$ then $Q$ is not a primitive collection. Therefore, when looking for primitive collections, we go through subsets of $G(\Sigma)$ in increasing cardinality.

Assuming an implementation of the above basic algorithm, a reasonably efficient way to list all of the primitive collections of a fan is to arrive at such a list by eliminating $P\subseteq G(\Sigma)$ which are not primitive collections. The first step is to remove any $P$ with $\lvert P \rvert = 1.$ Then for each $P$ we check whether it is a primitive collection. If it is, we keep it and remove all sets containing it. If it is not, we remove it. One way of implementing this method of listing primitive collections is implemented in pseudocode in \cref{alg:PrimitiveCollections}.

This algorithm is impractical if $G(\Sigma)$ is too large. In practice, on a modern laptop, it works reasonably well up to about $\lvert{G(\Sigma)}\rvert=17,$ partly because of the combination of the following factors: first, eliminating all of the supersets of any $P$ with $\lvert P \rvert=2$ cuts down the remaining search space significantly, and second, the relative abundance of primitive collections of size 2, at least among toric Fano varieties. For example, the 124 toric Fano 4-folds altogether have 785 primitive collections, of which 566 have cardinality 2.

\thispagestyle{plain}

This last factor makes the computation of the value of $m(X)$ for a given toric Fano variety $X$ easier as well. Of the toric Fano varieties of a given dimension $n$ (for $n\leq6$) those $X$ with $m(X)=1$ make up an overwhelming majority. This means that computing $m(X)$ is usually extremely fast, even in the most straightforward way. The following table summarizes the data for $\dim(X)\in\{4,5,6\}.$

\begin{table}[ht]
\centering
\begin{tabular}{|l||l|l|l|l|l|l|l|}
\hline
\small{$\dim(X)$} & \small{\# Fanos}  & \small{\#(m=1)} & \small{\#(m=2)} & \small{\#(m=3)} & \small{\#(m=4)} & \small{\#(m=5)} & \small{\#(m=6)} \\ \hline \hline
4      & 124  & 107        & 15         & 1          & 1          &            &            \\ \hline
5      & 866  & 744        & 112        & 8          & 1          & 1          &            \\ \hline
6      & 7622 & 6333       & 1174       & 105        & 8          & 1          & 1         \\ \hline
\end{tabular}
\end{table}

\begin{algorithm}
\caption{List primitive collections of a given fan}\label{alg:PrimitiveCollections}
\textbf{Input:} fan $\Sigma$\;
$n \gets \dim{\Sigma}$\;
$PC \gets \{P\subseteq G(\Sigma):\lvert P \rvert>1\}$\;
\For{$P \in PC$ satisfying $\lvert P \rvert=2$}{
    \eIf{there exists $\sigma\in\Sigma(\text{n})$ such that $P\subseteq G(\sigma)$}{
        $PC \gets PC\setminus\{P\}$\;
        }{
        $PC \gets PC\setminus\{Q\in G(\Sigma):P\subsetneq Q\}$\;
        }
}
\For{$i\in\{3,\dots,n\}$}{
    \For{$P\in PC$ satisfying $\lvert P \rvert=i$}{
        \eIf{there exists $\sigma\in\Sigma(n)$ such that $P\subseteq G(\sigma)$}{
            $PC \gets PC\setminus\{P\}$\;
        }{
            $b \gets \textbf{True}$\;
            \For{$v \in P$}{
                \If{there does not exist $\sigma\in\Sigma(n)$ with $P\setminus\{v\}\subseteq G(\sigma)$}{
                    $b \gets \textbf{False}$\;
                } 
            }
            \eIf{$b$}{
                $PC \gets PC\setminus\{Q\subseteq G(\Sigma):P\subsetneq Q\}$\;
            }{
                $PC \gets PC\setminus\{P\}$\;
            }
        }
    }
}
\textbf{Output:} $PC$\;
\end{algorithm}

\newpage

For convenience, we also provide Macaulay2 code implementing the algorithm for computing primitive collections.

\thispagestyle{plain}

\begin{verbatim}
coneExistenceCheck = (S, fan) -> (
    for cone in fan do (
        if isSubset(S, cone) then (
            return true;
        );
    );
    return false;
);

properSubsetCheck = (S, fan) -> (
    for ray in S do (
        if coneExistenceCheck(S-set{ray}, fan) == false then (
            return false;
        );
    );
    return true;
);

isPrimitiveCollection = (P, Var) -> (
    if coneExistenceCheck(P, orbits(Var, 0)) then (
        return false)
    else (
        return properSubsetCheck(P, orbits(Var, 0)
    );
);

supsetsOfPrimColl = (E, B) -> (
    return set{for P in E-set{B} when isSubset(B, P) list P};
);

primitiveCollections = (Var) -> (
    n = length rays Var;
    primColls = select(subsets(toList(0..n-1)), x -> length x > 1);
    for P in subsets(toList(0..n-1), 2) do (
        if coneExistenceCheck(P, orbits(Var, 0)) == false then (
            primColls = primColls - supsetsOfPrimColl(primColls, P);)
        else (
            primColls = primColls - set{P};
        );
    );
    for i in toList(3..n) do (
        for P in subsets(toList(0..n-1), i) do (
            if member(P, primColls) == false then continue;
            if isPrimitiveCollection(P, Var) then (
                primColls = primColls - supsetsOfPrimColl(primColls, P);
            ) else (
                primColls = primColls - set{P};
            );
        );
    );
    return sort primColls;
);
\end{verbatim}
\thispagestyle{plain}

\smallskip

\printbibliography

\end{document}